\newtheorem{theorem}{Theorem}[section] 
\newtheorem{lemma}[theorem]{Lemma}
\newtheorem{definition}[theorem]{Definition}
\newtheorem{corollary}[theorem]{Corollary}
\newtheorem{remark}[theorem]{Remark}
\newcommand{\argmax}[1]{\underset{#1}{\operatorname{argmax}}}
\newcommand{\IR}{\mathbb{R}}
\newcommand{\IE}{\mathbb{E}}
\newcommand{\IP}{\mathbb{P}}
\newcommand{\Ind}{\mathbbm{1}}
\newcommand{\IN}{\mathbb{N}}
\newcommand{\IC}{\mathbbm{C}}
\renewcommand{\tilde}{\widetilde}
\renewcommand{\epsilon}{\varepsilon}
\renewcommand{\phi}{\varphi}
\numberwithin{equation}{section}
\begin{document}
\title{\textbf{Semi-Parametric Estimation of Incubation and Generation Times by Means of Laguerre Polynomials}}
\author{Alexander Kreiss\footnote{This work was supported by the European Research Council under Grant 2016-2021, Horizon 2020 / ERC grant agreement No. 694409.}}
\affil{London School of Economics, Department of Statistics, London, UK}
\newcommand\CoAuthorMark{\footnotemark[\arabic{footnote}]}
\author[2]{Ingrid Van Keilegom\protect\CoAuthorMark}
\affil{KU Leuven, Faculty of Economics and Business (FEB), Leuven, Belgium}
\maketitle
\begin{abstract}
In epidemics many interesting quantities, like the reproduction number, depend on the incubation period (time from infection to symptom onset) and/or the generation time (time until a new person is infected from another infected person). Therefore, estimation of the distribution of these two quantities is of distinct interest. However, this is a challenging problem since it is normally not possible to obtain precise observations of these two variables. Instead, in the beginning of a pandemic, it is possible to observe for transmission pairs the time of symptom onset for both people as well as a window for infection of the first person (e.g. because of travel to a risk area). In this paper we suggest a simple semi-parametric sieve-estimation method based on Laguerre-Polynomials for estimation of these distributions. We provide detailed theory for consistency and illustrate the finite sample performance for small datasets via a simulation study.
\end{abstract}

\textbf{Keywords:} Laguerre-Polynomials, Semi-Parametric Estimation, Sieve-Estimation, Epidemics

\section{Introduction}
\label{sec:introduction}
A dominating question in the public evaluation of the COVID-19 situation during the 2020 pandemic is the estimation of the basic reproduction number $R_0$, the number of new infections which are caused (on average) from a single infected individual. While this number is prominently discussed in the news about COVID-19, it is an important variable for disease transmission in general (\citet{WL07,LLSHJW20}). In order to estimate the reproduction number of a disease the so-called generation time $G$ plays an important role (see Euler-Lotka-Equation e.g. in \citet{BS19}). The generation time $G$ is defined as the time difference between the infection time of a randomly chosen infected individual and his or her infector. Let $\phi_G$ denote the density of the generation time $G$ and let $i(t)$ be the expected incidence at time $t$ assuming a certain model of transmission (``\emph{the average community rate of new infections}'', \citet[p. 2]{BS19}). By formulating a renewal equation for $i(t)$ and assuming an exponential growth $i(t)=Ce^{rt}$, one obtains $R_0^{-1}=F_{R_0}(G)$ (the Euler-Lotka-Equation, see e.g. \citet{BS19,WL07}), where 
\begin{equation}
\label{eq:defF}
F_{R_0}(G):=\IE\left(e^{-rG}\right)=\int_0^{\infty}e^{-rt}\phi_G(t)dt.
\end{equation}
Hence, the basic reproduction number $R_0$ is a function of $\phi_G$.

We suppose to observe transmission pairs, i.e, two infected people $A$ and $B$ where it is known that $A$, the \emph{index case}, infected $B$, the \emph{secondary case}. We observe for $A$ and $B$ their times of symptom onset and we observe, in addition, that the infection of the index case happened in a certain interval. We do not assume information about the infection of the secondary case through the index case. Such data can arise e.g. if the index case got infected during a travel to a region where the virus is circulating and infected the secondary case back home where the virus is not spreading yet (see \citet{LGB20,Bietal20,XLL20} for examples related to travel to and from Wuhan in the early days of the pandemic). This leads to observations of the infection times with measurement error. Hence, the generation time and the incubation times $I_1,I_2$ (the time from infection to symptom onset) are not directly observed. The serial interval $S$ (the time between the two symptom onsets), in contrast, can be observed. Coming back to the example of estimation of $F_{R_0}(G)$ (and hence $R_0$), the most natural estimator, i.e., replacing the expectation by the sample average, is not feasible from our observations because the exact infection times are not observed. In reality it is rarely possible to make observations of $G$ directly for the same reason (cf. \citet{NKI09}). However, the urgency of the situation requires estimation with imperfect data (\citet{FWK20}).

We can write down a likelihood for observing the symptom onset times and the exposure interval. This likelihood can be written in terms of $\phi_G$ and other quantities. If it was possible to formulate a family of generation time densities which is indexed by $R_0$ we could simply maximize the likelihood over $R_0$. In view of \eqref{eq:defF} this seems difficult. We follow therefore a different idea: Find a non-parametric estimate $\hat{\phi}_{G,n}$ of $\phi_G$ and study $F_{R_0}(\hat{\phi}_{G,n})^{-1}$ as an estimator of $R_0$. Such \emph{plug-in estimators} have been studied e.g. in \citet{S97} and it was argued there that smoothness of $F_{R_0}$ can compensate for typical drawbacks of non-parametric estimators like a slow convergence rate.

This approach is not limited to $R_0$ but can be applied to other interesting functions $F$ of $\phi_G$, we call them \emph{features of $G$}. One example are tests: Some methods for estimation of $R_0$ make implicit assumptions about the generation time (cf. \citet{WL07}). Testing for such assumptions can be realized within this framework if the test statistic $T(\hat{\phi}_{G,n})$ can be written as a function of an estimate $\hat{\phi}_{G,n}$ of $\phi_G$, e.g. similarly to \citet{HM93} who use an $L^2$ type test-statistic. Further interesting features of $\phi_G$ are for instance variances, mean values, quantiles or the probability of pre-symptomatic infection $\IP(G\leq I)$ where $I$ denotes the incubation time (the time from infection to symptom onset). This is a feature of the joint variable $(G,I)$. It is therefore interesting to study efficient estimation of general features of $G$ or of $(G,I)$. We will focus in the following on features of $G$. More precisely, we will estimate $F(G)$ by firstly finding a semi-parametric sieve-estimator $\hat{\phi}_{G,n}$ of $\phi_G$ based on Laguerre polynomials and consider then the \emph{plug-in} estimator which estimates $F(G)$ through replacing $\phi_G$ by $\hat{\phi}_{G,n}$ in the definition of $F$.

In the beginning of a pandemic it is important to estimate its transmission characteristics (\citet{Bietal20}) and it is common to replace the unobserved generation time $G$ by the serial interval $S$ (cf. \citet{BS19,FWK20,WL07}), because this can be observed in clinical studies. In certain situations this can be a promising approach, cf. Remark \ref{rem:serial_vs_generation} below. However, this practice can also yield biased estimates (cf. \citet{BS19}). In this paper, we avoid this issue by estimating $\phi_G$ directly. Moreover, since the interest lies in a function of $\phi_G$ rather than $\phi_G$ itself, it might be desirable for researchers to not impose parametric assumptions on $\phi_G$ but to consider a non-parametric approach instead. In addition, a non-parametric approach avoids issues like selecting a parametric family or important aspects being hidden through this choice (cf. \citet{G21}). 

The main contribution of this paper is to provide the first step towards efficient, semi-parametric estimation of features of the generation time: Consistent density estimation. Based on such an estimator the theory for efficient, semi-parametric inference methods about features of $\phi_G$ can be developed. To this end we extend the methodology of \citet{FWK20,Getal20} in two ways: Firstly, we estimate the densities of the incubation period and the generation time jointly in one step with the same data and, secondly, we provide a semi-parametric framework which makes no a-priori assumptions about the incubation and generation times. We pursue this by constructing a sieve estimator based on Laguerre polynomials highlighting the flexibility of Laguerre polynomials as approximating functions. In order to identify the model we need to specify a model for the relation between the exposure window and the infection time but we will not make parametric assumptions about the distributions of the incubation period and the generation time. Thus, we will do semi-parametric estimation. 

General background information about data analysis in disease transmission can e.g. be found in \citet{HHOW19,CHBCC09}. The estimation of $R_0$ from observations of the serial interval in particular is discussed in \citet{LCC03}. As an alternative \citet{FWK20} suggest the following two step procedure: Firstly, fit a parametric model for the incubation time (cf. \citet{LGB20,Bietal20}) and then, secondly, use this to fit a parametric model to the generation time. \citet{G21} describes how to do non-parametric estimation of the incubation time. Problems related to under-reporting or delays (cf. \citet{AFH14}) are not considered here because we have data in mind which was collected by researchers rather than observational data from self-reporting. In addition to this type of experimental data, it is also possible to collect larger sets with covariates and use prediction techniques to identify transmission pairs (cf. \citet{LLSHJW20}). Specific parametric results about the above mentioned quantities for Covid-19 can e.g. be found in \citet{Bietal20,FWK20,Getal20,LGB20,Tetal20} and in many other places. The exact mathematical setting described below is close to de-convolution and measurement error settings, classical results about which can be found for example in \citet{F91,D89,CRSC06}. General background about sieve estimators and series estimation can be found in \citet{N97,C07} and the references therein. \citet{AC03,NP03} use sieve estimation based on moment conditions while we start from a likelihood. Further reading about plug-in sieve estimates can be found in \citet{S97,CS98}.

The structure of this paper is as follows: In Section \ref{sec:model} we introduce the exact modelling framework. Afterwards, in Section \ref{sec:methodology}, we provide a sieve estimator for the incubation period and generation time based on Laguerre polynomials. Its properties will be discussed in Section \ref{sec:theory} and we show that our estimator is asymptotically consistent. The asymptotic distribution of the methodology will be studied in Section \ref{sec:empirical_example} by means of simulations and it will be applied to a real-world dataset consisting of 191 SARS-COV-2 transmission pairs that has also been used by \citet{HMT21} and \citet{FLWZ20}. The R-code which is used for these computations is available on github (\href{https://github.com/akreiss/SemiParametric-Laguerre.git}{https://github.com/akreiss/ SemiParametric-Laguerre.git}). Finally, we finish the paper with some concluding remarks in Section \ref{sec:conclusion}. Additional simulation results and some proofs are collected in the Appendix.

\section{Model}
\label{sec:model}

We study observations of transmission pairs as shown in Figure \ref{fig:infection_chain}. The first person, the index case, gets infected at time $T_1$ which is unobserved. However, it is known to lie in the interval $[0,W]$, the \emph{exposure period}. The exact conditional distribution of $T_1$ in $[0,W]$ is allowed to depend on a random variable $C$ which determines for example the location of the infection. In certain situations, it will turn out that the likelihood will depend only on $\min(W,S_1)$, where $S_1\geq T_1$ denotes the time at which Person 1 shows symptoms. In that sense, observation of $W$ is only then required if $W\leq S_1$. Otherwise we do not have to observe $W$ but we have to observe that $W\geq S_1$. The \emph{incubation period} is defined as $I_1:=S_1-T_1$. Moreover, Person 1 is known to have infected Person 2, the secondary case. Similarly as above, we define $S_2,T_2,I_2$ for the second person: Person 2 shows symptoms at the observed time $S_2$. But, of course, the infection time $T_2$ and hence the incubation time $I_2:=S_2-T_2$ are unobserved. The generation time is thus $G:=T_2-T_1$ and it is also unobserved, while the serial interval $S:=S_2-S_1$ can be observed. Such data was for example collected by \citet{FWK20,Bietal20,LGB20}.

\begin{figure}
\centering
\includegraphics[width=0.8\textwidth]{./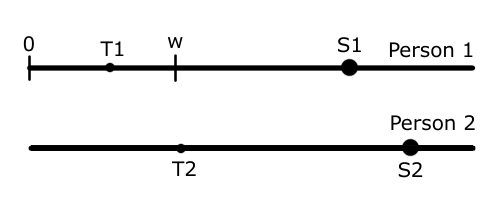}
\caption{Schematic depiction of the important quantities and their relation.}
\label{fig:infection_chain}
\end{figure}

The following relations can be directly read off from the definitions and Figure \ref{fig:infection_chain}.
\begin{align}
S_1=&T_1+I_1, \label{eq:ident1} \\
S_2=&T_1+I_2+G, \label{eq:defS2} \\
S=&G+I_2-I_1. \label{eq:ident2}
\end{align}

Let $(W_i,T_{1,i},C_i,S_{1,i},S_{2,i},G_i)_{i=1,...,n}$ be iid copies of $(W,T_1,C,S_1,S_2,G)$. However, we suppose that our observations are only given by $(S_{1,i},S_{2,i},\tilde{W}_i,C_i)_{i=1,...,n}$, where $\tilde{W}_i:=\min(S_{1,i},W_i)$. Based on our observations, we can only make direct inference about the joint distribution of $(S_1,S_2)$ and the serial interval $S:=S_2-S_1$. But we have only indirect information about $I$, $G$ and $T_1$ by means of \eqref{eq:ident1}-\eqref{eq:defS2} (note that \eqref{eq:ident2} is the difference of \eqref{eq:ident1} and \eqref{eq:defS2}). Informally speaking, we have three unknown densities and two equations. In other words, identification of the joint-distribution of the three random variables $(I,G,T_1)$ from a joint distribution of the two variables $(S_1,S_2)$ is impossible without additional assumptions. Throughout we will assume the following to be true:

\textbf{Assumption (M):} Model \\
\emph{$I_1,I_2$ and $G$ are non-negative and $C$ takes values in $\{1,...,K_C\}$. Their distributions have the following densities (with respect to the Lebesgue measure) or count measures and relate as follows
\begin{equation}
\label{eq:mod}
\begin{array}{lll}
W&\sim& \phi_W \\
C&\sim& p_C, \textrm{ independent of } W \\
T_1&\sim& \phi_{T_1}(\cdot|W,C)\,\textrm{ cond. on }\,(W,C) \\
I_1, I_2&\overset{iid}{\sim}&\phi_I, \textrm{independent of }(T_1,W,C)  \\
G&\sim& \phi_G, \textrm{ independent of } (I_1, I_2, T_1,W,C).
\end{array}
\end{equation}
It holds that $\phi_{T_1}(t|w,c)=\Ind(t\in[0,w])n(w,c)h(w-t|c)$ where $h:[0,\infty)\to[0,\infty)$ is integrable.}

When inspecting the likelihood of \citet{FWK20} and the assumptions of \citet{Getal20}, we see the same independence assumptions for $I_1, I_2, G$ and $T_1$. Independence between $I_1$ and $T_1$ can also be found in \citet{G21}. The variable $C$ can be understood as covariate and will be interpreted as the location of the infection. It can be used to capture heterogeneity in the infection dynamics: In a location with an increasing number of infections, it is more likely that the infection time $T_1$ lies towards the end of the infection window $[0,W]$ conditionally on $(C,W)$. In contrast, in locations with a low number of infections, $T_1$ is possibly uniformly distributed in $[0,W]$. See Remark \ref{rem:location} below for further details and a motivation for certain choices of $h$. It has been reported (see e.g. \citet{Tetal20}) that the transmission dynamics, i.e., $\phi_I$ and $\phi_G$, can be different in different scenarios. Thus, we need to restrict to data coming from the same environment. Moreover, the above assumption means that we observe true transmission pairs (see e.g. \citet{Tetal20} for a parametric model which allows for an unknown number of intermediate infectors).

\begin{remark}
\label{rem:location}
All discussion below is conditional on $W$. The dataset we imagine is recorded in the beginning of a pandemic, e.g. a person comes home after travelling to a region where the virus is circulating. Suppose that the time points, at which the person of interest has an infectious contact (i.e. a contact which definitely leads to an infection), are the jump points of a counting process $N$ with intensity function $\lambda:[0,W]\to[0,\infty)$. It is plausible that the intensity of infection is proportional to the number of infections in the population. Since (in most cases) the infection numbers behave exponentially, we also consider $\lambda(r)=\lambda_0\exp(-\alpha(W-r))\sim \exp(\alpha r)$, where $\alpha\in\IR$ describes the exponential growth (or decay) of the number of infections. In this notation, $T_1$, the time of infection, is the first jump of $N$. In an early stage of the pandemic it is plausible that there is only one infectious contact. Then, using that counting processes have independent increments and that the number of jumps in a certain interval is Poisson distributed with parameter equal to the integral of the intensity over the respective interval, we obtain
\begin{align*}
\IP(T_1\leq s|N(t)=1)=\frac{\IP(N([0,s])=1,\,N((s,t])=0)}{\IP(N([0,t])=1)}=\frac{\int_0^s\lambda(r)dr}{\int_0^t\lambda(r)dr}=\frac{e^{-\alpha (W-s)}-e^{-\alpha W}}{e^{-\alpha (W-t)}-e^{-\alpha W}}.
\end{align*}
Deriving the distribution function above yields that the density of $T_1$ is proportional to $\exp(-\alpha (W-s))$. Thus, we will later consider primarily the situation in which $h(u|c)=\exp(-r(C)u)$, where $r(C)$ is the (known) exponential growth of the infection numbers in location $C$. Note that taking $\alpha\to0$, i.e., the case of a constant intensity function, yields $\IP(T_1\leq s|N(t)=1)\to s/t$ (the distribution function of a uniform distribution on $[0,t]$).
\end{remark}
The discussion in Remark \ref{rem:location} is then plausible, if it is plausible to assume that the person of interest has infectious contacts with a rate which is so low/the time $W$ is so short that there is only one infectious contact in $[0,W]$. In particular in the beginning of a pandemic this seems reasonable. Note that for the subsequent infection (within the observed transmission pair) this assumption is no longer valid: It is, intuitively speaking, more likely that infections, e.g., in a household happen directly after the infected person has returned, rather than days after.

Note finally that in the special case $r(c_0)=0$ for some location $c_0$, i.e., if the infection level remains constant, we get $h(\cdot|c_0)=1$ (then $n(w,c_0)=1/w$), i.e. $T_1\sim\mathcal{U}([0,W])$ (uniform distribution). From a mathematical standpoint, this case leads to another very intuitive result: The conditional distribution of $S_1$ given $W$ and $C=c_0$ is given by the convolution through \eqref{eq:ident1}
\begin{align*}
&\IP(S_1\leq x|W,C=c_0)=\IP(T_1+I_1\leq x|W,C=c_0) \\
=&\frac{1}{W}\int_{-\infty}^{\infty}\int_{-\infty}^{\infty}\Ind(t\in[0,W])\Ind(t+s\leq x)\phi_I(s)dsdt=\frac{1}{W}\int_0^W\int_{-\infty}^{x-t}\phi_I(s)dsdt.
\end{align*}
The conditional density of $S_1$ can then be obtained through differentiation of the above with respect to $x$ and is given by
$$\frac{1}{W}\int_0^W\phi_I(x-t)dt=\frac{1}{W}\int_{x-W}^x\phi_I(u)du.$$
Hence, the likelihood of observing $S_1=x$ is given by the probability of observing an incubation period between $x-W$ and $x$, the natural lower and upper bounds. This way we can identify $\phi_I$ from observations of $S_1$. Then, in a second step, we can use \eqref{eq:ident2} to identify $\phi_G$ from observing $S$. Hence, under the assumption that $T_1\sim\mathcal{U}([0,W])$ conditionally on $W$, we can identify the distributions of $I$ and $G$ from the distribution of $(S_1,S_2)$. We make this mathematically precise also in the general case in Corollary \ref{cor:consistency} below.

We mentioned in the introduction that using the serial interval in place of the generation time can lead to biases. We discuss in the next remark in which situations this practice is safe to use and we argue that it does not work for $R_0$.

\begin{remark}[Serial Interval vs. Generation Time]
\label{rem:serial_vs_generation}
Consider the set-up from Assumption (M) and let $\phi_S$ denote the density of $S$. Our interest lies in estimation of $F(G)$, where $F$ is an arbitrary feature of $G$. As a first example let $F$ be linear, e.g. $F(G):=\IE(G)=\int_{\IR} t\phi_G(t)dt$ be the expectation. In that case by \eqref{eq:ident2}
$$F(G)=F(S)+F(I_1)-F(I_2)=F(S)=\int_{-\infty}^{\infty} t\phi_S(t)dt$$
and $F(G)$ can probably be well (maybe even optimally) estimated by estimating the integral on the right based on observations of the serial interval alone.
For the case of the basic reproduction number, however, $F_{R_0}(G)=\IE(\exp(-rG))$ is a moment generating function, i.e., it is non-linear. Other examples of interesting non-linear features of interest are quantiles, a quadratic test statistic or the probability of pre-symptomatic infection $\IP(G\leq I_1)$. For these features, we cannot simply estimate the serial interval in place of the generation time, but it is necessary to have an estimate of the density $\phi_G$ available. 
\end{remark}

Remark \ref{rem:serial_vs_generation} together with the unavailability of observations from $G$ shows that estimation of features like $R_0$ is non-trivial and requires more complicated methods. Before turning to the estimators we make a remark about asymptomatic patients.
\begin{remark}[Asymptomatic Patients] It is unclear how asymptomatic patients can be handled in this framework. However, it should be emphasized that asymptomatic means here infected people who infect others but who never show symptoms. Pre-symptomatic infections are well allowed, i.e., Person 1 is allowed to infect Person 2 before showing symptoms. However, we assume here that Person 1 and Person 2 both will eventually show symptoms.
\end{remark}

\section{Methodology}
\label{sec:methodology}
We introduce now our estimator by specifying approximations to $\phi_I$ and $\phi_G$. Consider the following class of densities on the non-negative real line for $m\in\IN_0:=\{0,1,2,...\}$
$$\mathcal{G}_m:=\left\{\phi_{\theta} :\quad \theta\in\IR^{m+1}, \|\theta\|_2=1\right\},$$
where $\|\cdot\|_2$ is the Euclidean norm and
\begin{equation}
\label{eq:def_phi_theta}
\phi_{\theta}(x)=\Ind(x\geq0)e^{-x}\left(\sum_{k=0}^m\theta^{(k)}L_k(x)\right)^2
\end{equation}
with $\theta=(\theta^{(0)},...,\theta^{(m)})$
$$L_k(x):=\sum_{i=0}^k\binom{k}{i}\frac{(-x)^i}{i!}$$
being the $k$-th Laguerre polynomial. Since the Laguerre polynomials form an orthonormal system of functions on $[0,\infty)$ with respect to the weight function $e^{-x}$ it is simple to show that the above construction yields a density under the simple condition $\|\theta\|_2=1$, cf. Lemma \ref{lem:approx}. Moreover, it is well known that a large class of densities can be approximated by Laguerre polynomials. Define to this end the Hellinger distance of two distributions on $\mathcal{X}$ which have densities $\phi_1,\phi_2$ with respect to a measure $\mu$ via
$$\rho_H(\phi_1,\phi_2):=\left(\int_{\mathcal{X}}\left(\phi_1(x)^{\frac{1}{2}}-\phi_2(x)^{\frac{1}{2}}\right)^2d\mu(x)\right)^{\frac{1}{2}}.$$
The following lemma is essentially a rephrasing of Theorem 1 in Chapter II.8 of \citet{NU88} which we state here for the convenience of the reader.
\begin{lemma}
\label{lem:approx}
For any $m\in\IN_0$ and any $\theta\in\IR^{m+1}$ with $\|\theta\|_2=1$ we have that $\phi_{\theta}$ as defined in \eqref{eq:def_phi_theta} is a density function on $[0,\infty)$. Suppose moreover that $\phi$ is an arbitrary density on $[0,\infty)$ such that $p(x):=\sqrt{e^x\phi(x)}$ is continuous on $[0,\infty)$ and has a piecewise continuous derivative $p'(x)$. Suppose that
\begin{equation}
\label{eq:cond2}
\int_0^{\infty}p'(x)^2xe^{-x}dx<+\infty.
\end{equation}
Then, for any sequence $m_n\to\infty$ there are $\theta_n\in\IR^{m_n+1}$ with $\|\theta_n\|=1$ such that for $n\to\infty$
$$\rho_H(\phi_{\theta_n},\phi)^2=\int_0^{\infty}\left(\phi_{\theta_n}(x)^{1/2}-\phi(x)^{1/2}\right)^2dx\to0.$$
Moreover, $\phi_{\theta_n}\to\phi$ locally uniformly, i.e., $\sup_{x\in K}|\phi_{\theta_n}(x)-\phi(x)|\to0$ for any compact set $K\subseteq(0,\infty)$.
\end{lemma}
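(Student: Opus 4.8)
The plan is to handle the three assertions in turn, observing that the genuine analytic content of the convergence lives in the cited Theorem 1 of Chapter II.8 of \citet{NU88}, while the remaining steps are reformulations adapted to the square-root and unit-norm structure of $\phi_\theta$. First I would verify that $\phi_\theta$ is a density. Nonnegativity is immediate from \eqref{eq:def_phi_theta}, since $\phi_\theta$ is $e^{-x}$ times a square. For the total mass I would expand the square and invoke the orthonormality relation $\int_0^\infty L_j(x)L_k(x)e^{-x}\,dx=\delta_{jk}$; this collapses the resulting double sum to $\sum_{k=0}^m(\theta^{(k)})^2=\|\theta\|_2^2=1$, so that $\int_0^\infty\phi_\theta\,dx=1$.

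Next I would set up the approximation in the correct function space. Writing $g(x):=\phi(x)^{1/2}=e^{-x/2}p(x)$ and noting that $\phi_\theta(x)^{1/2}=e^{-x/2}\bigl|\sum_{k=0}^m\theta^{(k)}L_k(x)\bigr|$, the squared Hellinger distance becomes $\int_0^\infty e^{-x}\bigl(\bigl|\sum_k\theta^{(k)}L_k\bigr|-p\bigr)^2dx$, i.e.\ a squared distance in $L^2([0,\infty),e^{-x}dx)$. The hypotheses give $\int_0^\infty p(x)^2e^{-x}dx=\int_0^\infty\phi(x)dx=1$, so $p$ lies in $L^2(e^{-x})$ with unit norm; let $c_k:=\int_0^\infty p(x)L_k(x)e^{-x}dx$ denote its Fourier--Laguerre coefficients. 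Under the stated regularity of $p$ together with condition \eqref{eq:cond2}, \citet{NU88} guarantees that the partial sums $S_m p:=\sum_{k=0}^m c_k L_k$ converge to $p$ both in $L^2(e^{-x})$ and uniformly on compact subsets of $(0,\infty)$; in particular $\sigma_m^2:=\sum_{k=0}^m c_k^2\to\|p\|^2=1$.

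The remainder is bookkeeping built around these two modes of convergence. To meet the constraint $\|\theta\|_2=1$ exactly I would set $\theta_n^{(k)}:=c_k/\sigma_{m_n}$, so that $\sum_{k=0}^{m_n}\theta_n^{(k)}L_k=\sigma_{m_n}^{-1}S_{m_n}p$. For the Hellinger bound the elementary inequality $\bigl||a|-|b|\bigr|\le|a-b|$ combined with $p=|p|\ge0$ yields
$$\rho_H(\phi_{\theta_n},\phi)^2\le\int_0^\infty e^{-x}\Bigl(\sigma_{m_n}^{-1}S_{m_n}p(x)-p(x)\Bigr)^2dx,$$
and decomposing the integrand difference as $\sigma_{m_n}^{-1}(S_{m_n}p-p)+(\sigma_{m_n}^{-1}-1)p$ shows the right-hand side tends to $0$, since $S_{m_n}p\to p$ in $L^2(e^{-x})$ and $\sigma_{m_n}\to1$. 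For local uniform convergence, $\sigma_{m_n}^{-1}S_{m_n}p\to p$ uniformly on any compact $K\subset(0,\infty)$; this sequence is therefore uniformly bounded on $K$, so its square converges uniformly to $p^2$ there, and multiplying by the bounded factor $e^{-x}$ gives $\phi_{\theta_n}\to e^{-x}p^2=\phi$ uniformly on $K$.

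I expect the only genuine obstacle to be the convergence of the Laguerre expansion itself, namely that the assumed smoothness of $p$ and the weighted integral condition \eqref{eq:cond2} are exactly what is needed to obtain $S_mp\to p$ uniformly on compacts. This is precisely the step at which \citet{NU88} is invoked, and once it is available everything else reduces to the normalisation by $\sigma_{m_n}$ and the estimate $\bigl||a|-|b|\bigr|\le|a-b|$.
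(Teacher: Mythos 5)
Your proposal is correct and follows essentially the same route as the paper's own proof: orthonormality of the Laguerre polynomials for the density claim, Fourier--Laguerre coefficients $c_k$ normalised by $\sigma_{m_n}$ to enforce $\|\theta_n\|_2=1$, the \citet{NU88} theorem for both the weighted $L^2$ and the locally uniform convergence, and the same two-term decomposition $\sigma_{m_n}^{-1}(S_{m_n}p-p)+(\sigma_{m_n}^{-1}-1)p$ of the error. Your explicit use of $\bigl||a|-|b|\bigr|\le|a-b|$ to account for possible sign changes of the polynomial is a small point of added care that the paper's proof passes over silently.
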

\begin{proof}
Let $p_{m_n}(x;\theta_n):=\sum_{k=0}^{m_n}\theta_n^{(k)}L_k(x)$, where $\theta_n^{(k)}$ denotes the $k$-th entry of $\theta_n$. Then $\phi_{\theta_n}(x)=\Ind(x\geq0)e^{-x}p_{m_n}(x;\theta_n)^2$. The first part of the lemma is a simple calculation using the orthonormality of the Laguerre polynomials, i.e., use that
$$\int_0^{\infty}e^{-x}L_k(x)L_l(x)dx=\Ind(k=l).$$
For the second statement we note
$$\int_0^{\infty}e^{-x}p(x)^2dx=\int_0^{\infty}\phi(x)dx=1<\infty.$$
This and \eqref{eq:cond2} are exactly the conditions of Theorem 1 in Chapter II.8 of \citet{NU88} which we stated in the Appendix as Theorem \ref{thm:lag_approx} for the convenience of the reader. The theorem states that the sequence $\tilde{p}_n(x):=\sum_{k=0}^{m_n}c_kL_k(x)$ with $c_k=\int_0^{\infty}p(x)L_k(x)e^{-x}dx$ converges locally uniformly, i.e., uniformly on compact sets, to $p$. Let
$$\mathcal{C}_n:=\int_0^{\infty}e^{-x}\tilde{p}_n(x)^2dx=\sum_{k=0}^{m_n}c_k^2$$
and $\theta_n^{(k)}:=c_k/\mathcal{C}_n^{1/2}$ for $k=0,...,m_n$. Clearly, $\|\theta_n\|_2=1$. By Parseval's Identity $\mathcal{C}_n\to\sum_{k=0}^{\infty}c_k^2=\int_0^{\infty}e^{-x}p(x)^2dx=1$. Hence, we can conclude that also
$$p_{m_n}(x;\theta_n)=\sum_{k=0}^{m_n}\theta_n^{(k)}L_k(x)=\mathcal{C}_n^{-\frac{1}{2}}\sum_{k=0}^{m_n}c_kL_k(x)=\mathcal{C}_n^{-\frac{1}{2}}\tilde{p}_n(x)$$
converges locally uniformly to $p$ because $p$ is bounded on compact sets $K\subseteq(0,\infty)$. From this we conclude also the locally uniform convergence of $\phi_{\theta_n}$ to $\phi$. Moreover, Theorem \ref{thm:lag_approx} states that 
$$\int_0^{\infty}e^{-x}(\tilde{p}_n(x)-p(x))^2dx\to0.$$
The proof of the Lemma is complete since the above implies
\begin{align*}
&\int_0^{\infty}\left(\phi_{\theta_n}^{1/2}-\phi^{1/2}\right)^2dx=\int_0^{\infty}e^{-x}\left(p(x)-p_{m_n}(x;\theta_n)\right)^2dx \\
=&\int_0^{\infty}e^{-x}\left(p(x)-\tilde{p}_n(x)+\tilde{p}_n(x)\left(1-\mathcal{C}_n^{-\frac{1}{2}}\right)\right)^2dx \\
\leq&2\int_0^{\infty}e^{-x}\left(p(x)-\tilde{p}_n(x)\right)^2dx+2\int_0^{\infty}e^{-x}\tilde{p}_n(x)^2dx\left(1-\mathcal{C}_n^{-\frac{1}{2}}\right)^2\to0.
\end{align*}
\end{proof}
The conditions of Lemma \ref{lem:approx} cover a wide class of piecewise continuously differentiable densities like any sub-Gaussian density, densities with compact support and sub-exponential densities. We consider condition \eqref{eq:cond2} therefore as not-restrictive. Before we can write down the likelihood, we have to find the conditional density of $(S_1,S_2,W)$ given $C$. 
\begin{lemma}
\label{lem:density}
Suppose that the random variables $W,C,T_1,I_1,I_2,G$ are related as in Assumption (M). The conditional joint density of $(S_1,S_2,W)$ given $C$, where $S_1:=I_1+T_1$ and $S_2:=T_1+I_2+G$, is given by
$$f(x_1,x_2,\omega|c)=n(\omega,c)\phi_W(\omega)\int_0^{x_2}\phi_G(y)\int_0^{\min(x_1,\omega)}h(\omega-t|c)\phi_I(x_1-t)\phi_I(x_2-t-y)dtdy.
$$
for $x_1,x_2,\omega\geq0$ and $0$ otherwise.
\end{lemma}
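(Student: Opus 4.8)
The plan is to obtain the stated conditional density by starting from the joint law of the latent variables $(W,T_1,I_1,I_2,G)$ given $C$, introducing $S_1$ and $S_2$ through a change of variables, and then integrating out the two remaining latent coordinates.

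First I would write down the conditional joint density of $(W,T_1,I_1,I_2,G)$ given $C=c$. By the independence structure in Assumption (M) --- $W$ is independent of $C$, the pair $(I_1,I_2)$ is iid $\phi_I$ and independent of $(T_1,W,C)$, and $G$ is independent of everything else --- this density factorizes as
$$\phi_W(\omega)\,\phi_{T_1}(t|\omega,c)\,\phi_I(i_1)\,\phi_I(i_2)\,\phi_G(g),$$
where $\phi_{T_1}(t|\omega,c)=\Ind(t\in[0,\omega])\,n(\omega,c)\,h(\omega-t|c)$. Here I use that $W\perp C$ so that the $W$-marginal contributes $\phi_W(\omega)$ rather than a conditional density.

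Next, using $S_1=T_1+I_1$ and $S_2=T_1+I_2+G$ from \eqref{eq:ident1} and \eqref{eq:defS2}, I would change variables from $(I_1,I_2)$ to $(S_1,S_2)$ while keeping $(T_1,G,W)$ as the three remaining coordinates. The inverse map $I_1=S_1-T_1$, $I_2=S_2-T_1-G$ is affine, and the associated Jacobian matrix (in the ordering $(I_1,I_2,T_1,G,W)\mapsto(S_1,S_2,T_1,G,W)$) is triangular with unit diagonal, so its determinant equals $1$. Substituting then gives the conditional density of $(S_1,S_2,T_1,G,W)$ given $C=c$, in which the two incubation factors become $\phi_I(x_1-t)$ and $\phi_I(x_2-t-g)$.

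Finally I would integrate out $T_1$ and $G$. The indicator in $\phi_{T_1}$ forces $t\in[0,\omega]$; the support $[0,\infty)$ of $\phi_I$ forces $x_1-t\geq0$ (hence $t\leq\min(x_1,\omega)$) as well as $x_2-t-y\geq0$; and the support of $\phi_G$ forces $y\geq0$. Since the integrand vanishes whenever $y>x_2-t$, I may enlarge the $y$-range from $[0,x_2-t]$ to $[0,x_2]$ without changing the value, and collecting these constraints produces exactly the stated double integral with outer limits $\int_0^{x_2}\phi_G(y)\,dy$ and inner limits $\int_0^{\min(x_1,\omega)}\cdots\,dt$.

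The computation is essentially routine once the change of variables is arranged; the only step needing genuine care is the bookkeeping of the support constraints, so that the limits collapse to the clean form $t\in[0,\min(x_1,\omega)]$ and $y\in[0,x_2]$ rather than to the coupled region $\{t\leq\min(x_1,\omega),\,y\leq x_2-t\}$. I would also verify that the Jacobian is genuinely equal to $1$ (not merely a nonzero constant) so that no normalizing factor is dropped, and keep track that the factor $n(\omega,c)\phi_W(\omega)$, being free of $t$ and $y$, pulls outside both integrals.
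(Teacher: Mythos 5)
Your proposal is correct, but it takes a genuinely different route from the paper. You work entirely at the level of densities: you write the joint conditional density of the latent vector $(W,T_1,I_1,I_2,G)$ given $C$ as the product $\phi_W(\omega)\phi_{T_1}(t|\omega,c)\phi_I(i_1)\phi_I(i_2)\phi_G(g)$, pass to $(S_1,S_2,T_1,G,W)$ by an affine change of variables with unit (triangular) Jacobian, and then marginalize out $t$ and $g$, with the support constraints collapsing the limits to $t\in[0,\min(x_1,\omega)]$ and $y\in[0,x_2]$. The paper instead computes the conditional distribution function $\IP(S_1\leq x_1,S_2\leq x_2,W\leq\omega|C)$ by conditioning on $(T_1,W,C)$ and exploiting the independence assumptions to factor the inner probability, and then differentiates three times (under the integral sign) in $x_1,x_2,\omega$ to recover the density. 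Your route buys a cleaner derivation that never has to justify interchanging differentiation and integration — the only analytic tools are the change-of-variables formula for a linear bijection and Fubini for nonnegative integrands; the paper's route stays at the more elementary level of distribution functions but implicitly relies on differentiating under the integral, which it does not justify in detail. Both arguments handle the final bookkeeping identically: the paper also remarks that the $y$-integral can be taken over $[0,x_2]$ because $\phi_I(x_2-t-y)=0$ for $y>x_2-t$, exactly the enlargement step you single out as the one needing care. One small point worth making explicit in a written version: the underlying measure is Lebesgue measure on the five continuous coordinates times counting measure on $C$, so the factorized expression really is a density and the change-of-variables theorem applies coordinate-wise in the continuous variables for each fixed $c$.
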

\begin{proof}
Let $x_1,x_2,\omega\geq0$ be arbitrary. Note firstly that, by (M), the conditional density of $(W,T_1)$ given $C$ is given by
$$\phi_{T_1}(t|w,c)\phi_W(w)=\Ind(t\in[0,\omega])n(w,c)h(w-t|c)\phi_W(w).$$
Below integrals of the type $\int_0^a$ are to be understood over the set $(\min(0,a),\max(0,a))$. Since all integrands are supported in $[0,\infty)$, integrals are zero when $a<0$. We have by the independence assumptions in (M)
\begin{align*}
&\IP(S_1\leq x_1, S_2\leq x_2,W\leq\omega|C)=\IP(T_1+I_1\leq x_1,\, T_1+G+I_2\leq x_2,W\leq\omega|C) \\
=&\IE\left(\Ind(W\leq\omega)\IP(T_1+I_1\leq x_1,\, T_1+G+I_2\leq x_2|T_1,W,C)|C\right) \\
=&\IE\left(\Ind(W\leq\omega)\IP(I_1\leq x_1-T_1|T_1,C)\cdot\IP(G+I_2\leq x_2-T_1|T_1,C)|C\right) \\
=&\IE\left(\Ind(W\leq\omega)\int_0^{x_1-T_1}\phi_I(a)da\cdot\int_0^{x_2-T_1}\int_0^{\infty}\phi_G(y)\phi_I(b-y)dydb\Big|C\right) \\
=&\int_0^{\omega}\int_0^vn(v,c)h(v-t|c)\phi_W(v)\int_0^{x_1-t}\phi_I(a)da\cdot\int_0^{x_2-t}\int_0^{\infty}\phi_G(y)\phi_I(b-y)dydbdtdv.
\end{align*}
We obtain furthermore by differentiating under the integral and using that $\phi_I$ and $\phi_G$ are supported on the non-negative real line 
\begin{align*}
&f(x_1,x_2,\omega|c)=\frac{d^3}{dx_1dx_2d\omega}\IP(S_1\leq x_1,S_2\leq x_2,W\leq\omega|C=c) \\
=&\frac{d^2}{dx_1dx_2}n(\omega,c)\phi_W(\omega)\int_0^{\omega}h(\omega-t|c)\int_0^{x_1-t}\phi_I(a)da\cdot\int_0^{x_2-t}\int_0^{\infty}\phi_G(y)\phi_I(b-y)dydbdt \\
=&n(\omega,c)\phi_W(\omega)\int_0^{\omega}h(\omega-t|c)\phi_I(x_1-t)\cdot\int_0^{\infty}\phi_G(y)\phi_I(x_2-t-y)dydt \\
=&n(\omega,c)\phi_W(\omega)\int_0^{\infty}\phi_G(y)\int_0^{\omega}h(\omega-t|c)\phi_I(x_1-t)\phi_I(x_2-t-y)dtdy \\
=&n(\omega,c)\phi_W(\omega)\int_0^{x_2}\phi_G(y)\int_0^{\min(x_1,\omega)}h(\omega-t|c)\phi_I(x_1-t)\phi_I(x_2-t-y)dtdy.
\end{align*}
In the last line above we can replace the upper integration bound of the outer integral by $x_2$ because for $y>x_2$, we have $x_2-t-y<0$ and hence $\phi_I(x_2-t-y)=0$.
\end{proof}

By using the density from above, we obtain an expression for the likelihood for estimation of $\phi_I$ and $\phi_G$.

\begin{lemma}
\label{lem:likelihood}
Let the observations $(S_{1,i},S_{2,i},W_i,C_i)_{i=1,...,n}$ be iid copies of $(S_1,S_2,W,C)$ which are related as in Assumption (M). Let $m_1,m_2\in\IN$ and $\theta_1\in\IR^{m_1+1}$, $\theta_2\in\IR^{m_2+1}$. Consider the candidate densities $\phi_{I,\theta_1}\in\mathcal{G}_{m_1}$ and $\phi_{G,\theta_2}\in\mathcal{G}_{m_2}$. The likelihood for this configuration is given by, denote $\tilde{W}_i:=\min(W_i,S_{1,i})$
\begin{align}
&\mathcal{L}_n(\phi_{I,\theta_1},\phi_{G,\theta_2}) \nonumber \\
=&\sum_{i=1}^n\log\left(\int_0^{S_{2,i}}\phi_{G,\theta_2}(y)\int_0^{\tilde{W}_i}h(W_i-t|C_i)\phi_{I,\theta_1}(S_{1,i}-t)\phi_{I,\theta_1}(S_{2,i}-t-y)dtdy\right) \nonumber \\
&\quad\quad\quad+C_n, \label{eq:loglik_basis}
\end{align}
where $C_n$ is a random constant which does not depend on $\phi_{I,\theta_1}$ and $\phi_{G,\theta_2}$. In the specific case of $h(u|c)=\exp(-r(c)u)$ for some exponential parameter $r(c)\in\IR$ we even get that
\begin{align}
\mathcal{L}_n=&\sum_{i=1}^n\log\left(\int_0^{S_{2,i}}\phi_{G,\theta_2}(y)\int_0^{\tilde{W}_i}\exp\left(r(C_i)t\right)\phi_{I,\theta_1}(S_{1,i}-t)\phi_{I,\theta_1}(S_{2,i}-t-y)dtdy\right) \nonumber \\
&\quad\quad\quad\quad+\tilde{C}_n \label{eq:loglik_basis_exp}
\end{align}
depends on $W_i$ only through $\tilde{W}_i$. Above $\tilde{C}_n$ is a random constant which does not depend on $\phi_{I,\theta_1}$ and $\phi_{G,\theta_2}$.
\end{lemma}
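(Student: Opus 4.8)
The plan is to read the likelihood directly off the conditional density computed in Lemma~\ref{lem:density}, with the candidate densities $\phi_{I,\theta_1}$ and $\phi_{G,\theta_2}$ inserted in place of the unknown $\phi_I$ and $\phi_G$, and then to collect every factor that is free of $(\theta_1,\theta_2)$ into the additive constant. Since the $(S_{1,i},S_{2,i},W_i,C_i)$ are iid and the marginal law $p_C$ of $C$ does not involve the parameters, the object of interest is the conditional density of $(S_1,S_2,W)$ given $C$; by independence the log-likelihood is simply the sum over $i$ of the logarithm of this density evaluated at the $i$-th observation (the terms $\log p_C(C_i)$, if one works with the unconditional likelihood, being parameter-free).

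First I would substitute the formula of Lemma~\ref{lem:density} to write the $i$-th summand as
\[
\log\bigl(n(W_i,C_i)\phi_W(W_i)\bigr)+\log\left(\int_0^{S_{2,i}}\phi_{G,\theta_2}(y)\int_0^{\min(S_{1,i},W_i)}h(W_i-t|C_i)\phi_{I,\theta_1}(S_{1,i}-t)\phi_{I,\theta_1}(S_{2,i}-t-y)\,dt\,dy\right).
\]
The leading factor $n(W_i,C_i)\phi_W(W_i)$ carries no dependence on $(\theta_1,\theta_2)$, so summing the first logarithm over $i$ (together with any $\log p_C(C_i)$) produces the random constant $C_n$. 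Recognizing that $\min(S_{1,i},W_i)=\tilde{W}_i$ by definition converts the inner integration limit into $\tilde{W}_i$ and yields \eqref{eq:loglik_basis} verbatim.

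For the exponential specialization I would factor $h(W_i-t|C_i)=\exp(-r(C_i)W_i)\exp(r(C_i)t)$. The first factor is constant in the integration variables $t$ and $y$, so it pulls out of both integrals and contributes, inside the logarithm, the additive and parameter-free term $-r(C_i)W_i$; absorbing these terms into the previous constant gives $\tilde{C}_n$ and leaves $\exp(r(C_i)t)$ in the integrand, which is exactly \eqref{eq:loglik_basis_exp}. The claim worth emphasizing is the last one: after this factorization the only surviving occurrence of $W_i$ in the non-constant part is the upper limit $\tilde{W}_i$ of the inner integral, whereas in the general form $W_i$ also enters through $h(W_i-t|C_i)$. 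This is precisely why, in the exponential case, the estimator can be evaluated from the observable $\tilde{W}_i=\min(S_{1,i},W_i)$ alone.

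The computation is routine and presents no serious analytic obstacle, since the differentiation producing the density has already been carried out in Lemma~\ref{lem:density}. The only points requiring care are the bookkeeping of the constants — checking that every factor swept into $C_n$ or $\tilde{C}_n$ is genuinely independent of $(\theta_1,\theta_2)$, even though it may depend on the observed (hence random) quantities, which is what \emph{random constant} refers to — and verifying that the exponential factorization leaves no residual dependence on $W_i$ beyond $\tilde{W}_i$.
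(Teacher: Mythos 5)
Your proposal is correct and follows essentially the same route as the paper: write the likelihood as $\sum_i\log\bigl(f_{\phi_{I,\theta_1},\phi_{G,\theta_2}}(S_{1,i},S_{2,i},W_i|C_i)p_C(C_i)\bigr)$ using Lemma~\ref{lem:density}, sweep $n(W_i,C_i)\phi_W(W_i)p_C(C_i)$ into the constant, and in the exponential case factor $h(W_i-t|C_i)=e^{-r(C_i)W_i}e^{r(C_i)t}$ so that the parameter-free term $-r(C_i)W_i$ joins the constant. Your closing observation that the residual $W_i$-dependence then sits only in the integration limit $\tilde{W}_i$ is exactly the point of the second claim.
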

\begin{proof}
Let $f_{\phi_{I,\theta_1},\phi_{G,\theta_2}}(x_1,x_2,\omega|c)$ denote the conditional density of $(S_1,S_2,W)$ given $C$ as defined in Lemma \ref{lem:density} when $I$ has density $\phi_{I,\theta_1}$ and $G$ has density $\phi_{G,\theta_2}$. By independence, we have
\begin{align*}
&\mathcal{L}_n \\
=&\sum_{i=1}^n\log \left(f_{\phi_{I,\theta_1},\phi_{G,\theta_2}}(S_{1,i},S_{2,i},W_i|C_i)p_C(C_i)\right) \\
=&\sum_{i=1}^n\log\left(\int_0^{S_{2,i}}\phi_{G,\theta_2}(y)\int_0^{\min(S_{1,i},W_i)}h(W_i-t|C_i)\phi_{I,\theta_1}(S_{1,i}-t)\phi_{I,\theta_1}(S_{2,i}-t-y)dtdy\right) \\
&\quad\quad\quad\quad+\sum_{i=1}^n\log\left(n(W_i,C_i)\phi_W(W_i)p_C(C_i)\right) ,
\end{align*}
where the latter term is independent of $\phi_{I,\theta_1}$ and $\phi_{G,\theta_2}$. This finishes the proof of \eqref{eq:loglik_basis}. For the specific choice $h(u|c)=\exp(-r(c)u)$ we get
\begin{align*}
&\mathcal{L}_n \\
=&\sum_{i=1}^n\log\left(\int_0^{S_{2,i}}\phi_{G,\theta_2}(y)\int_0^{\min(S_{1,i},W_i)}\exp\left(r(C_i)t\right)\phi_{I,\theta_1}(S_{1,i}-t)\phi_{I,\theta_1}(S_{2,i}-t-y)dtdy\right) \\
&\quad\quad\quad\quad+\sum_{i=1}^n\left(\log\left(n(W_i,C_i)\phi_W(W_i)p_C(C_i)\right)-r(C_i)W_i\right)
\end{align*}
which finishes the proof of \eqref{eq:loglik_basis_exp} since the second line does not depend on $\phi_{I,\theta_1}$ or $\phi_{G,\theta_2}$.
\end{proof}
Note that Lemma \ref{lem:likelihood} implies that for given $\phi_W$ and $p_C$, the likelihood to be optimized is, conveniently, independent of $\phi_W$ and $p_C$. Thus, we can treat $\phi_W$ and $p_C$ as known without loosing any practicality and, therefore, we may define the following estimators for $\phi_I$ and $\phi_G$: For given sequences $(m_{1,n})_{n\in\IN},(m_{2,n})_{n\in\IN}\subseteq\IN_0$, we study
\begin{align*}
\left(\hat{\phi}_{I,n},\hat{\phi}_{G,n}\right):=&\argmax{\phi_{I,\theta_1}\in\mathcal{G}_{m_1},\phi_{G,\theta_2}\in\mathcal{G}_{m_2}}\sum_{i=1}^n\log\Bigg(\int_0^{S_{2,i}}\phi_{G,\theta_2}(y)\int_0^{\tilde{W}_i}h(W_i-t|C_i) \\
&\quad\quad\quad\quad\times \phi_{I,\theta_1}(S_{1,i}-t)\phi_{I,\theta_1}(S_{2,i}-t-y)dtdy\Bigg).
\end{align*}
At this point it is not clear that we can identify the parameters $\theta_1$,$\theta_2$. Later, in Corollary \ref{cor:consistency}, we will see that consistent estimation of the distribution functions of $\phi_I$ and $\phi_G$ is possible if a mild assumption on the characteristic functions holds. Under the same assumption, a similar proof-technique can be applied to show that two different sets of parameters lead to different likelihoods.

\section{Theory}
\label{sec:theory}
In this section we will prove and discuss a consistency result. For $\phi_I$ and $\phi_G$ being arbitrary densities of $I_1,I_2$ and $G$, respectively, we denote by $f_{\phi_I,\phi_G}$ the conditional density of $(S_1,S_2,W)$ given $C$ as defined in Lemma \ref{lem:density}. Then, $f_{\phi_I,\phi_G}p_C$ denotes the joint density of $(S_1,S_2,W,C)$. When $\phi_I$ and $\phi_G$ are chosen from the approximating spaces $\mathcal{G}_{m_1}$ and $\mathcal{G}_{m_2}$, the following set denotes the set of all possible approximations to the joint density of $(S_1,S_2,W,C)$
$$\mathcal{F}_{m_1,m_2}:=\left\{f_{\phi_I,\phi_G}p_C:\,\phi_I\in\mathcal{G}_{m_1},\,\phi_G\in\mathcal{G}_{m_2}\right\}.$$
In the following we will always assume that we are in the setting presented in Section \ref{sec:model}.
\begin{theorem}
\label{thm:consistency}
Suppose (M) holds true and let $m_{1,n},m_{2,n}=n^{\beta}$ and $\epsilon_n=n^{-\gamma}$ for some $\beta,\gamma>0$ such that $\gamma<1/2$ and $\beta\leq1-2\gamma$. Suppose that the true densities $\phi_I$ and $\phi_G$ fulfil the assumptions of Lemma \ref{lem:approx} and let $\phi_{I,n}$ and $\phi_{G,n}$ be the sequences from Lemma \ref{lem:approx} such that $\rho_H(\phi_I,\phi_{I,n})\to0$ and $\rho_H(\phi_G,\phi_{G,n})\to0$. Suppose that there is $\alpha\in(0,1/2)$ such that for some $N_0\in\IN_0$
\begin{equation}
\label{eq:tail_cond}
\sup_{n\geq N_0}C_{\alpha}(f_{\phi_I,\phi_G},f_{\phi_{I,n},\phi_{G,n}})<\infty,
\end{equation}
where
\begin{equation}
\label{eq:defC}
C_{\alpha}(f_1,f_2):=\left(\sum_{c=1}^{K_C}\int_{[0,\infty)^3}\left(\frac{f_1(x_1,x_2,\omega|c)}{f_2(x_1,x_2,\omega|c)}\right)^{\frac{\alpha}{1-\alpha}}f_1(x_1,x_2,\omega|c)p_C(c)d(x_1,x_2,\omega)\right)^{1-\alpha}.
\end{equation}
Then, we have that
$$\rho_H(f_{\phi_I,\phi_G}p_C,f_{\hat{\phi}_{I,n},\hat{\phi}_{G,n}}p_C)=O_P\left(\max\left(\epsilon_n,\max\left(\rho_H(\phi_I,\phi_{I,n}),\rho_H(\phi_G,\phi_{G,n})\right)^{\alpha}\right)\right)=o_P(1).$$
\end{theorem}
The proof of this result will be given later in Section \ref{subsec:proof}. We will begin with a discussion of the result and its assumptions in the next subsection.

\subsection{Discussion of Theorem \ref{thm:consistency}}
\label{subsec:discussion}
Before turning to the assumptions of Theorem \ref{thm:consistency} we make a remark about the convergence rate.
\begin{remark}
Note that the requirements on $\beta$ and $\gamma$ in Theorem \ref{thm:consistency} allow for the choice $\beta=1/5$ and $\gamma=2/5<1/2$. This yields the \emph{classical} convergence rate $n^{-2/5}$ for series estimation with polynomials if the true density is sufficiently smooth (cf. \citet{N97} and Proposition 3.6 in \citet{C07}).
\end{remark}
We continue with a discussion of the condition \eqref{eq:tail_cond} above. This can be understood as a tail condition: For any compact set $K\subseteq(0,\infty)^3$ we have by Lemma \ref{lem:approx} that $f_{\phi_{I,n},\phi_{G,n}}$ converges uniformly to $f_{\phi_I,\phi_G}$. Thus, if we restrict the integral in the definition of $C_{\alpha}(f_{\phi_I,\phi_G},f_{\phi_{I,n},\phi_{G,n}})$ to $K$, the restricted integral remains bounded. Note also that by choosing $\alpha>0$ small, possible singularities of $f_2$ are integrable. Hence, \eqref{eq:tail_cond} is only restrictive for the integral over $K^c$. Thus, if $f_{\phi_I,\phi_G}$ is compactly supported \eqref{eq:tail_cond} follows. Note furthermore that $f_{\phi_I,\phi_G}$ is compactly supported if $\phi_I, \phi_G$ and $\phi_W$ are compactly supported. In our specific epidemics setting for COVID-19, this is a highly plausible assumption because people stop being infectious at some point. However, for other diseases people can stay infectious for an extended period of time, e.g. for Hepatitis C, the mean generation time is about $20$ years (cf. \citet{WL07}).

In those cases of non-compactly supported distributions, \eqref{eq:tail_cond} is a restriction: It is required that the approximations do not decrease much faster to zero than the actual density. The meaning of \emph{much faster} is to be understood relative to $f_{\phi_I,\phi_G}$ and it can be adjusted by choosing $\alpha>0$ small. Any polynomial difference can therefore be handled.

Theorem \ref{thm:consistency} above shows, strictly speaking, that we can consistently estimate the joint distribution of $(S_1,S_2,W,C)$. However, from a practical point of view, the estimation of $\phi_I$ and $\phi_G$ is of interest. In order to ensure identifiability of $\phi_I$ and $\phi_G$, we  need the following additional assumption:

\textbf{Assumption (C):} Characteristic Functions: \\
\emph{Let $\Phi_I$ denote the characteristic function of $I$. There are functions $z_1:\IR\to\IR$ and $z_2:\IR\to\IR$ such that for almost all $x\in\IR$
$$\Phi_I(x)\neq0\quad\textrm{ and }\quad \IE\left(e^{iz_1(x)W+iz_2(x)C}\IE\left(e^{ixT_1}\big|W,C\right)\right)\neq0.$$}

In particular the second requirement above looks cryptic in full generality. In the following remark we show that in specific scenarios it simplifies to simpler conditions which are more interpretable.
\begin{remark}
The condition
\begin{equation}
\label{eq:ccf}
\IE\left(e^{iz_1(x)W+iz_2(x)C}\IE\left(e^{ixT_1}\big|W,C\right)\right)\neq0
\end{equation}
above can be rewritten in more specific scenarios.
\begin{enumerate}
\item If $\Phi_{T_1}(x):=\IE(\exp(ixT_1))\neq0$ almost everywhere, \eqref{eq:ccf} holds because we may simply choose $z_1\equiv z_2\equiv0$.
\item If $C\equiv1$ and $T_1$ is uniformly distributed in $[0,W]$, i.e., $h(w-t|c)\equiv1$ and $n(w,c)=1/w$, we obtain with $z_2\equiv0$ and $z_1(x)=-x/2$ that (note that $\sin(x)/x\to1$ for $x\to0$ such that the below inequality chain can be used also for $x=0$)
\begin{align*}
&\IE\left(e^{iz_1(x)W+iz_2(x)C}\IE\left(e^{ixT_1}\big|W,C\right)\right)=\IE\left(e^{-i\frac{x}{2}W}\frac{1}{W}\int_0^We^{ixt}dt\right) \\
=&\IE\left(\frac{1}{ixW}\left(e^{i\frac{x}{2}W}-e^{-i\frac{x}{2}W}\right)\right)=\IE\left(\frac{1}{\frac{x}{2}W}\sin\left(\frac{x}{2}W\right)\right)=\IE\left(\int_0^1\cos\left(\frac{x}{2}Wt\right)dt\right) \\
=&\int_0^1\textrm{Re}\left(\IE\left(e^{i\frac{x}{2}Wt}\right)\right)dt=\int_0^1\textrm{Re}\left(\Phi_W\left(\frac{x}{2}t\right)\right)dt,
\end{align*}
where $\Phi_W$ is the characteristic function of $W$. The above is non-zero for almost all $x\in\IR$ (implying \eqref{eq:ccf}) for many standard distributions like the exponential distribution which characteristic function has a strictly positive real part.
\end{enumerate}
\end{remark}
Our setting is very similar to the de-convolution set-up: In \eqref{eq:ident1} the signal $I_1$ is perturbed by the noise $T_1$ (the distribution of which is determined by $(W,C)$) and in \eqref{eq:defS2} the signal $G$ is perturbed by the noise $I_2+T_1$. Thus it is clear that assumptions on $W,C$ and $I_1$ similar to those for the de-convolution set-up are necessary for identification. The assumption of almost everywhere non-vanishing characteristic functions has already been mentioned in \citet{D89} (see the Remark below Theorem 1 therein) to be necessary to guarantee consistent estimation.

Let $F_I$ and $F_G$ denote the distribution functions corresponding to the densities $\phi_I$ and $\phi_G$, respectively. Similarly, define $\hat{F}_{I,n}$ and $\hat{F}_{G,n}$ to be the distribution functions of $\hat{\phi}_{I,n}$ and $\hat{\phi}_{G,n}$, respectively. The following corollary ensures under Assumption (C) that $\hat{F}_{I,n}$ and $\hat{F}_{G,n}$ are consistent estimators for $F_I$ and $F_G$, respectively.

\begin{corollary}
\label{cor:consistency}
Suppose that, in addition to the assumptions of Theorem \ref{thm:consistency}, Assumption (C) holds true. We then have
$$\left\|\hat{F}_{I,n}-F_I\right\|_{\infty}=o_P(1) \textrm{ and } \left\|\hat{F}_{G,n}-F_G\right\|_{\infty}=o_P(1).$$
\end{corollary}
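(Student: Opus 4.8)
The plan is to transfer the convergence of the full joint law established in Theorem \ref{thm:consistency} to the one-dimensional marginals $F_I,F_G$, using Assumption (C) as the identifiability device. Since $F_I$ and $F_G$ are continuous (they possess densities), P\'olya's theorem reduces the claimed uniform convergence to weak convergence $\hat F_{I,n}\Rightarrow F_I$ and $\hat F_{G,n}\Rightarrow F_G$. To deal with the ``$o_P$'' I would argue along subsequences: by the subsequence characterization of convergence in probability it suffices to show that every subsequence admits a further subsequence along which $\|\hat F_{I,n}-F_I\|_\infty\to0$ and $\|\hat F_{G,n}-F_G\|_\infty\to0$ almost surely. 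Fixing a realization on which $\rho_H(f_{\phi_I,\phi_G}p_C,f_{\hat\phi_{I,n},\hat\phi_{G,n}}p_C)\to0$ (extractable from Theorem \ref{thm:consistency}), the rest of the argument is deterministic.

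On such a realization, Hellinger convergence controls the $L^1$-distance of the joint densities of $(S_1,S_2,W,C)$, hence forces \emph{uniform} convergence of their characteristic functions: writing $\Psi$ and $\widehat\Psi_n$ for the joint characteristic functions under the true and the estimated models, one has $|\widehat\Psi_n(\xi)-\Psi(\xi)|\le\|f_{\hat\phi_{I,n},\hat\phi_{G,n}}p_C-f_{\phi_I,\phi_G}p_C\|_{1}\le 2\rho_H$ for every frequency $\xi$. Using $S_1=T_1+I_1$, $S_2=T_1+I_2+G$ and the independence structure of Assumption (M), this characteristic function factorizes as
$$\Psi(u,v,z_1,z_2)=A(u+v,z_1,z_2)\,\Phi_I(u)\,\Phi_I(v)\,\Phi_G(v),\qquad A(x,z_1,z_2):=\IE\!\left(e^{iz_1W+iz_2C}\,\IE\!\left(e^{ixT_1}\mid W,C\right)\right),$$
and the factor $A$ is identical for the true and the estimated models, since only $\phi_I,\phi_G$ differ. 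This $A$ is exactly the quantity appearing in Assumption (C).

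The identification step then proceeds as follows. Setting $v=0$ gives $\Psi(u,0,z_1,z_2)=A(u,z_1,z_2)\Phi_I(u)$; for almost every $u$ Assumption (C) furnishes $z_1(u),z_2(u)$ with $A(u,z_1(u),z_2(u))\neq0$, so dividing yields $\widehat\Phi_{I,n}(u)\to\Phi_I(u)$ for a.e.\ $u$. Since $\Phi_I\neq0$ a.e.\ (again Assumption (C)), fixing $v$ and choosing $u$ together with $z_1,z_2$ so that $A(u+v,z_1,z_2)\Phi_I(u)\neq0$, the full factorization then gives $\widehat\Phi_{G,n}(v)\to\Phi_G(v)$ for a.e.\ $v$. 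Thus the characteristic functions of the estimated incubation and generation laws converge to the true ones Lebesgue-almost everywhere.

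The main obstacle is the final upgrade from almost-everywhere convergence of characteristic functions to weak convergence of the distributions, since L\'evy's continuity theorem needs convergence at \emph{every} frequency. I would bridge this gap by a smoothing-plus-tightness argument: convolving with $N(0,\sigma^2)$ and applying dominated convergence (the integrand is dominated by $2e^{-\sigma^2 x^2/2}$) shows that the smoothed densities converge uniformly, whence by Scheff\'e's lemma the smoothed laws converge in total variation for each fixed $\sigma>0$; writing the unsmoothed variable as the smoothed one minus the independent $N(0,\sigma^2)$ noise then yields tightness of $\{\hat F_{I,n}\}$ and $\{\hat F_{G,n}\}$. Tightness lets me extract weakly convergent subsequences, and any subsequential limit has a characteristic function agreeing with $\Phi_I$ (resp.\ $\Phi_G$) almost everywhere, hence everywhere by continuity, so the limit must be $F_I$ (resp.\ $F_G$). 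This forces $\hat F_{I,n}\Rightarrow F_I$ and $\hat F_{G,n}\Rightarrow F_G$, and P\'olya's theorem upgrades these to the uniform statements. The only bookkeeping I would watch is that the selection $u\mapsto(z_1(u),z_2(u))$, and the choices isolating $\Phi_G$, land in the common full-measure set on which all the relevant factors are nonzero; this is routine given Assumption (C).
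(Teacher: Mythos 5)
Your proposal is correct and follows essentially the same route as the paper's proof: a subsequence-of-subsequences reduction to almost-sure Hellinger convergence, the factorization $\Phi_{S_1,S_2,W,C}(x_1,x_2,x_3,x_4)=\Psi(x_1+x_2,x_3,x_4)\Phi_I(x_1)\Phi_I(x_2)\Phi_G(x_2)$, division by the factors that Assumption (C) guarantees are nonzero to get almost-everywhere convergence of the estimated characteristic functions of $I$ and $G$, and finally P\'olya's theorem for the uniform statement. The only divergence is at the last step: where the paper invokes a version of L\'evy's continuity theorem noting that almost-everywhere convergence of characteristic functions suffices in the univariate case, you supply a self-contained proof of that fact via Gaussian smoothing, Scheff\'e's lemma and tightness, which is a perfectly valid substitute.
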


The detailed proof is given in Section \ref{subsec:cor_proofs} in the Appendix.

We finally come back to estimating the basic reproduction number $R_0$. The above framework provides us with a methodology to consistently estimate the reproduction number without making parametric statements about the incubation period or the generation time. From \eqref{eq:defF} it is evident that we require next to an estimator for $\phi_G$ also an estimate $\hat{r}_n$ of the growth rate of the expected incidence. In \citet{FWK20} the exponential growth rate of the reported numbers is estimated to be $r=0.14$. But, strictly speaking, what we need here is the growth rate of the infection numbers which is (intuitively speaking) very similar but can be different due to under-reporting and delays.  The estimator is specified in the following corollary, the proof of which can also be found in Section \ref{subsec:cor_proofs} in the Appendix.

\begin{corollary}
\label{cor:R}
Let $\hat{r}_n$ denote an estimate of the exponential growth rate of the expected incidence $r>0$, i.e., let $\hat{r}_n-r=o_P(1)$. Under the conditions of Theorem \ref{thm:consistency} and Assumption (C), we have
$$\int_0^{\infty}e^{-\hat{r}_nt}\hat{\phi}_{G,n}(t)dt\overset{\IP}{\to}F_{R_0}(G).$$
\end{corollary}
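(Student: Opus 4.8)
The plan is to reduce the statement to the two convergences already available: the consistency of the estimated growth rate, $\hat{r}_n-r=o_P(1)$, which is assumed, and the uniform consistency of the generation-time distribution function, $\|\hat{F}_{G,n}-F_G\|_{\infty}=o_P(1)$, which is exactly Corollary \ref{cor:consistency}. Recalling from \eqref{eq:defF} that $F_R(G)=\int_0^{\infty}e^{-rt}\phi_G(t)dt$, I would split the error into a part caused by replacing $r$ by $\hat{r}_n$ and a part caused by replacing $\phi_G$ by $\hat{\phi}_{G,n}$, namely
$$\int_0^{\infty}e^{-\hat{r}_n t}\hat{\phi}_{G,n}(t)\,dt - F_R(G) = \underbrace{\int_0^{\infty}\left(e^{-\hat{r}_n t}-e^{-rt}\right)\hat{\phi}_{G,n}(t)\,dt}_{(A)} + \underbrace{\int_0^{\infty}e^{-rt}\left(\hat{\phi}_{G,n}(t)-\phi_G(t)\right)dt}_{(B)},$$
and show that each term is $o_P(1)$.

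For term $(A)$ I would use that, since $\hat{r}_n\to r>0$, we have $\hat{r}_n>r/2$ on an event of probability tending to one; on this event the mean value theorem gives $|e^{-\hat{r}_n t}-e^{-rt}|\le t\,e^{-rt/2}\,|\hat{r}_n-r|$ for all $t\ge0$. Because $\hat{\phi}_{G,n}$ is a probability density and $\sup_{t\ge0}t\,e^{-rt/2}=2/(er)<\infty$, this yields $|(A)|\le \frac{2}{er}\,|\hat{r}_n - r|=o_P(1)$, with no moment assumptions on the random density needed, since the weight $t\,e^{-rt/2}$ is bounded.

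For term $(B)$ I would integrate by parts. Writing $H(t):=\hat{F}_{G,n}(t)-F_G(t)$, which vanishes at $t=0$ (both distributions live on $[0,\infty)$) and at $t=\infty$, and using $\tfrac{d}{dt}e^{-rt}=-re^{-rt}$, the boundary terms vanish and one obtains $(B)=r\int_0^{\infty}e^{-rt}H(t)\,dt$. Hence $|(B)|\le r\,\|\hat{F}_{G,n}-F_G\|_{\infty}\int_0^{\infty}e^{-rt}\,dt=\|\hat{F}_{G,n}-F_G\|_{\infty}$, which is $o_P(1)$ by Corollary \ref{cor:consistency}. Combining the two bounds gives the claim.

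The argument is essentially bookkeeping once the decomposition is in place, and there is no deep obstacle given that Corollary \ref{cor:consistency} is already established; the single point requiring genuine care is term $(A)$, where the integrand $e^{-\hat{r}_n t}-e^{-rt}$ depends on the random quantity $\hat{r}_n$ and is integrated against the equally random density $\hat{\phi}_{G,n}$. The resolution is to decouple the two sources of randomness by bounding the $t$-dependent factor uniformly: the boundedness of $t\,e^{-rt/2}$ lets the total mass of $\hat{\phi}_{G,n}$ (which is one) absorb the density, leaving only the deterministic-rate error $|\hat{r}_n-r|$. This is also where positivity of $r$ is essential, so that the exponential tail survives both the differentiation in $(B)$ and the majorization in $(A)$ and provides an integrable bound.
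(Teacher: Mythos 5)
Your proposal is correct and follows essentially the same route as the paper: the identical decomposition into the rate-perturbation term and the density-perturbation term, the same mean-value/Taylor bound using $te^{-rt/2}\leq c$ together with $\int\hat{\phi}_{G,n}=1$ for the first term, and the same integration by parts reducing the second term to $\|\hat{F}_{G,n}-F_G\|_{\infty}$. The only (immaterial) difference is that the paper passes to almost surely convergent subsequences before making these estimates, whereas you carry the $o_P$ statements through directly via explicit bounds on a high-probability event.
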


\subsection{Proof of Theorem \ref{thm:consistency}}
\label{subsec:proof}
In order to prove Theorem \ref{thm:consistency} we need to introduce the same distance relation which was used also by \citet{WS95}. Define for any two densities $f_1,f_2$ with respect to a measure $\mu$ on a space $\mathcal{X}$ and $\alpha\neq0$ the distance relation
$$\rho_{\alpha}(f_1,f_2):=\frac{1}{\alpha}\int_{\mathcal{X}}f_1(x)\left(\left(\frac{f_1(x)}{f_2(x)}\right)^{\alpha}-1\right)d\mu(x).$$
Note that
\begin{align*}
&\rho_{-1/2}(f_1,f_2)=-2\int_{\mathcal{X}}\left(\sqrt{f_1(x)f_2(x)}-f_1(x)\right)d\mu(x)=2-2\int_{\mathcal{X}}\sqrt{f_1(x)f_2(x)}d\mu(x) \\
=&\int_{\mathcal{X}}\left(f_1(x)+f_2(x)-2\sqrt{f_1(x)f_2(x)}\right)d\mu(x)=\int_{\mathcal{X}}\left(\sqrt{f_1(x)}-\sqrt{f_2(x)}\right)^2d\mu(x) \\
=&\rho_H(f_1,f_2)^2.
\end{align*}
A second property of $\rho_{\alpha}$ we mention here, is non-negativity for $\alpha\geq-1$: Let $g_{\alpha}:(0,\infty)\to\IR$ be given by $g_{\alpha}(x):=\frac{1}{\alpha}\left(x^{-\alpha}-1\right)$. Then $g_{\alpha}''(x)=(\alpha+1)x^{-\alpha-2}\geq0$ because $\alpha\geq-1$. Hence, $g_{\alpha}$ is a convex function. We hence obtain by Jensen's Inequality (for the measure with density $f_1$ with respect to $\mu$)
\begin{align*}
\rho_{\alpha}(f_1,f_2)=\int_{\mathcal{X}}g_{\alpha}\left(\frac{f_2(x)}{f_1(x)}\right)f_1(x)d\mu(x)\geq g_{\alpha}\left(\int_{\mathcal{X}}\frac{f_2(x)}{f_1(x)}f_1(x)\mu(x)\right)=0.
\end{align*}
Lemma \ref{lem:approx} shows that the sieve spaces $\mathcal{G}_m$ lie dense in a large class of densities with respect to the Hellinger distance. However, for our later consistency result, we require that the sieve spaces provide also good approximations with respect to $\rho_{\alpha}$ for $\alpha>0$. The following lemma provides the main tool.

\begin{lemma}
\label{lem:rho}
Let $\phi_{I,1}, \phi_{I,2}$ be two densities for incubation periods and let $\phi_{G,1}, \phi_{G,2}$ be two densities of generation times and consider $f_1(x_1,x_2,\omega|c)$ and $f_2(x_1,x_2,\omega|c)$ defined as in Lemma \ref{lem:density} using $(\phi_{I,1},\phi_{G,1})$ and $(\phi_{I,2},\phi_{G,2})$, respectively. Define $C_{\alpha}(f_1,f_2)$ as in \eqref{eq:defC}. Then, for any $\alpha\in(0,1/2)$,
\begin{equation}
\label{eq:hellinger_bound}
\rho_{\alpha}(f_1p_C,f_2p_C)\leq\frac{1}{\alpha}C_{\alpha}(f_1,f_2)\left(2\rho_H(\phi_{G,1},\phi_{G,2})^2+8\rho_H(\phi_{I,1},\phi_{I,2})^2\right)^{\alpha}.
\end{equation}
\end{lemma}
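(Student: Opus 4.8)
The plan is to prove the inequality in two essentially independent stages: a Hölder-type bound that introduces the constant $C_\alpha(f_1,f_2)$ and converts $\rho_\alpha$ into a squared Hellinger distance between the \emph{joint} densities $f_1p_C$ and $f_2p_C$, followed by a decomposition that bounds this joint Hellinger distance by the Hellinger distances of the individual building blocks $\phi_G$ and $\phi_I$.

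For the first stage, I would start from the definition of $\rho_\alpha$ and observe that $p_C$ cancels inside the ratio, so that
$$\alpha\rho_\alpha(f_1p_C,f_2p_C)=\sum_{c=1}^{K_C}\int f_1\Big(\big(f_1/f_2\big)^\alpha-1\Big)p_C\,d(x_1,x_2,\omega).$$
I then factor the integrand as $f_1((f_1/f_2)^\alpha-1)=A\cdot B$ with $A:=(f_1/f_2)^\alpha f_1^{1-\alpha}\ge0$ and $B:=f_1^\alpha-f_2^\alpha$, and bound $AB\le A|B|$. Applying Hölder's inequality with conjugate exponents $1/(1-\alpha)$ and $1/\alpha$ (both $>1$ since $\alpha\in(0,1/2)$) against the measure $\sum_c\int\cdot\,p_C\,d(x_1,x_2,\omega)$ splits $p_C$ correctly and produces $(\sum_c\int A^{1/(1-\alpha)}p_C)^{1-\alpha}$, which is \emph{exactly} $C_\alpha(f_1,f_2)$ by \eqref{eq:defC}, times $(\sum_c\int|f_1^\alpha-f_2^\alpha|^{1/\alpha}p_C)^\alpha$; the factorization is chosen precisely so that the first factor matches that definition. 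To convert the second factor into a Hellinger distance I use the elementary Hölder-continuity bound $|a^\alpha-b^\alpha|\le|a-b|^\alpha$ applied to $a=\sqrt{f_1}$, $b=\sqrt{f_2}$ with exponent $2\alpha\le1$, which gives $|f_1^\alpha-f_2^\alpha|^{1/\alpha}\le(\sqrt{f_1}-\sqrt{f_2})^2$ and hence $\sum_c\int|f_1^\alpha-f_2^\alpha|^{1/\alpha}p_C\le\rho_H(f_1p_C,f_2p_C)^2$. Together these yield $\rho_\alpha(f_1p_C,f_2p_C)\le\frac{1}{\alpha}C_\alpha(f_1,f_2)\rho_H(f_1p_C,f_2p_C)^{2\alpha}$.

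The second stage is the main work: showing $\rho_H(f_1p_C,f_2p_C)^2\le2\rho_H(\phi_{G,1},\phi_{G,2})^2+8\rho_H(\phi_{I,1},\phi_{I,2})^2$. Fixing $(x_1,x_2,\omega,c)$, I write $f_i=\int P_i\,d\nu$ with $P_i(t,y)=\phi_{G,i}(y)\phi_{I,i}(x_1-t)\phi_{I,i}(x_2-t-y)$ and $d\nu=n(\omega,c)\phi_W(\omega)h(\omega-t|c)\,dt\,dy$ on the region $0\le t\le\min(x_1,\omega)$, $0\le y\le x_2$. The key pointwise inequality is $(\sqrt{f_1}-\sqrt{f_2})^2\le\int(\sqrt{P_1}-\sqrt{P_2})^2\,d\nu$, which is equivalent to $\int\sqrt{P_1P_2}\,d\nu\le\sqrt{\int P_1\,d\nu}\,\sqrt{\int P_2\,d\nu}$, i.e. to Cauchy--Schwarz. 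I then telescope the triple product $\sqrt{P_i}=\sqrt{\phi_{G,i}(y)}\,\sqrt{\phi_{I,i}(x_1-t)}\,\sqrt{\phi_{I,i}(x_2-t-y)}$, peeling off the $\phi_G$ factor first and the two $\phi_I$ factors afterwards, each time using $(a+b)^2\le2a^2+2b^2$; this produces one term with coefficient $2$ carrying $(\sqrt{\phi_{G,1}}-\sqrt{\phi_{G,2}})^2$ and two terms with coefficient $4$ carrying the two $\phi_I$ differences. Finally I integrate back against $\nu$ and $p_C$ and use Fubini to eliminate one variable at a time: integrating $x_1$ and $x_2$ collapses the spectator $\phi_I$ and $\phi_G$ factors to $1$ (via $u=x_1-t$ and $u=x_2-t-y$), each difference factor integrates to the appropriate $\rho_H(\cdot,\cdot)^2$, and the leftover weight $\sum_c p_C(c)\int\phi_W(\omega)\int_0^\omega n(\omega,c)h(\omega-t|c)\,dt\,d\omega$ equals $1$ because $n(\omega,c)$ normalises $\phi_{T_1}(\cdot|\omega,c)$, $\int\phi_W=1$ and $\sum_c p_C(c)=1$. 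This gives exactly $2\rho_H(\phi_{G,1},\phi_{G,2})^2+4\rho_H(\phi_{I,1},\phi_{I,2})^2+4\rho_H(\phi_{I,1},\phi_{I,2})^2$, and combining the two stages finishes the proof.

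The genuinely non-routine ingredients are the two \emph{pull-the-square-root-inside} steps: the factorization $A\cdot B$ that makes Hölder reproduce $C_\alpha$ precisely, and the Cauchy--Schwarz inequality $(\sqrt{\int P_1\,d\nu}-\sqrt{\int P_2\,d\nu})^2\le\int(\sqrt{P_1}-\sqrt{P_2})^2\,d\nu$ that reduces the joint Hellinger distance to a pointwise one. I expect the main obstacle to be the second-stage bookkeeping: keeping the telescoping order consistent so that the constants land exactly at $2$ and $8$, and verifying that every Fubini interchange is legitimate and that the spectator factors together with the $T_1$-normalisation really integrate to $1$ over the constrained region.
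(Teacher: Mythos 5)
Your proof is correct, and its first stage (the factorization $f_1((f_1/f_2)^\alpha-1)=\big((f_1/f_2)^\alpha f_1^{1-\alpha}\big)\cdot(f_1^\alpha-f_2^\alpha)$ followed by H\"older with exponents $1/(1-\alpha)$ and $1/\alpha$) is exactly the paper's opening move, as are the telescoping of the triple product with $(a+b)^2\le 2a^2+2b^2$ and the Fubini/substitution/normalisation bookkeeping that yields the constants $2$ and $8$. The one genuine structural difference is the order of the two ``push inside'' operations: the paper keeps the $\alpha$-powers throughout, applies the reverse triangle inequality in $L^{1/\alpha}$ twice to move $|(\int A_1)^\alpha-(\int A_2)^\alpha|^{1/\alpha}$ down to the level of the pointwise products, and only converts to squared square-root differences at the very end via $|x^{2\alpha}-y^{2\alpha}|\le|x-y|^{2\alpha}$; you instead apply that subadditivity bound immediately at the level of the joint densities, obtaining the clean intermediate statement $\rho_\alpha(f_1p_C,f_2p_C)\le\tfrac{1}{\alpha}C_\alpha(f_1,f_2)\,\rho_H(f_1p_C,f_2p_C)^{2\alpha}$, and then push the square roots inside the mixture integral via Cauchy--Schwarz (equivalently, the reverse triangle inequality in $L^2$). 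The two orderings commute and land on identical constants, but your modularisation has the advantage that the second stage becomes a self-contained convexity property of the Hellinger distance under mixtures, reusable elsewhere (indeed the paper effectively re-derives the $\alpha=1/2$ case of its own chain inside the proof of Lemma \ref{lem:bracketing}); the paper's version has the advantage of handling all $\alpha\in(0,1/2)$ in a single uniform computation. All Fubini interchanges are of non-negative integrands and the spectator factors integrate to one exactly as you claim, so there is no gap.
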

The proof of the above Lemma is presented in Section \ref{subsec:lemma_proofs} in the Appendix. Before we can prove the consistency result, we need as a last preparation a bound on the bracketing entropy of the spaces $\mathcal{F}_{m_1,m_2}$.

\begin{definition}
\label{eq:brackets}
Let $\epsilon>0$, $\mathcal{F}_0\subseteq\mathcal{F}$ classes of functions and $\rho:\mathcal{F}\times\mathcal{F}\to[0,\infty)$ a metric on $\mathcal{F}$ be given. The \emph{bracketing number} $\mathcal{N}_{[]}(\epsilon,\mathcal{F}_0,\rho)$ is defined as the smallest number of pairs $(l_i,u_i)\in\mathcal{F}^2$ of functions such that $\rho(l_i,u_i)\leq\epsilon$ for all pairs and such that for any $f\in\mathcal{F}_0$ there is a pair $(l_i,u_i)$ with $l_i(x)\leq f(x)\leq u_i(x)$. The pairs $(l_i,u_i)$ are called \emph{brackets}.
\end{definition}

\begin{lemma}
\label{lem:bracketing}
For $m_1,m_2\geq1$ and any $\epsilon>0$ with $\epsilon\leq\left(15m_1m_2^2/4\right)^{1/2}$ we have
$$\log\mathcal{N}_{[]}(\epsilon,\mathcal{F}_{m_1,m_2},\rho_H)\leq(m_1+m_2)\log\left(\frac{\pi\sqrt{15m_1m_2^2}}{\epsilon}\right)$$
\end{lemma}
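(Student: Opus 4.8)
The plan is to index $\mathcal{F}_{m_1,m_2}$ by the pair $(\theta_1,\theta_2)$ on the product of unit spheres $S^{m_1}\subset\IR^{m_1+1}$ and $S^{m_2}\subset\IR^{m_2+1}$ (of intrinsic dimensions $m_1$ and $m_2$), and to convert a Euclidean net of this parameter set into Hellinger brackets. The engine is a pointwise Lipschitz estimate for the square-root densities: writing $\sqrt{\phi_\theta(x)}=e^{-x/2}|\sum_{k=0}^m\theta^{(k)}L_k(x)|$, the reverse triangle inequality and Cauchy–Schwarz give, for $\|\theta-\theta'\|_2\le\delta$, the bound $|\sqrt{\phi_\theta(x)}-\sqrt{\phi_{\theta'}(x)}|\le\delta\,e^{-x/2}\Lambda_m(x)$ with $\Lambda_m(x):=(\sum_{k=0}^mL_k(x)^2)^{1/2}$. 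The orthonormality of the Laguerre polynomials (as in Lemma \ref{lem:approx}) supplies the single integral I will need about this envelope, namely $\int_0^\infty e^{-x}\Lambda_m(x)^2\,dx=m+1$.

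From this I would build Hellinger brackets for each single family. Fixing a $\delta$-net $\{\theta_j\}$ of $S^{m}$ and putting $u_j:=(e^{-x/2}|\sum_k\theta_j^{(k)}L_k|+\delta e^{-x/2}\Lambda_m)^2$ and $l_j:=(\max\{0,e^{-x/2}|\sum_k\theta_j^{(k)}L_k|-\delta e^{-x/2}\Lambda_m\})^2$, the Lipschitz estimate guarantees $l_j\le\phi_\theta\le u_j$ for every $\theta$ in the $j$-th cell, while $\sqrt{u_j}-\sqrt{l_j}\le2\delta e^{-x/2}\Lambda_m$ pointwise forces the bracket size $\rho_H(u_j,l_j)\le2\delta\sqrt{m+1}$. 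Carrying this out for $(\delta_1,m_1)$ and $(\delta_2,m_2)$ yields brackets $[l_I,u_I]$ and $[l_G,u_G]$ for the incubation and generation factors, of sizes $\lesssim\delta_1\sqrt{m_1+1}$ and $\lesssim\delta_2\sqrt{m_2+1}$.

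Next I would lift these to brackets for $\mathcal{F}_{m_1,m_2}$. The density of Lemma \ref{lem:density} is monotone increasing in the pointwise values of $\phi_I$ and $\phi_G$, since its integrand is the product $\phi_G(y)\phi_I(x_1-t)\phi_I(x_2-t-y)$ integrated against the non-negative weight $n(\omega,c)\phi_W(\omega)h(\omega-t|c)$; hence substituting the lower (resp. upper) endpoints into the formula produces $L\le f_{\phi_I,\phi_G}p_C\le U$, a valid bracket for the corresponding member of $\mathcal{F}_{m_1,m_2}$. The crux—and the main obstacle—is to bound $\rho_H(U,L)$ by the component sizes, which is delicate precisely because $u_I,u_G,l_I,l_G$ are not probability densities, so the clean affinity identity for densities is unavailable. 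I would resolve this by recognizing $U$ and $L$ as the push-forwards, under the measurable map $(i_1,i_2,g,t_1,w,c)\mapsto(t_1+i_1,t_1+i_2+g,w,c)$, of the product functions $u_I\otimes u_I\otimes u_G\otimes K$ and $l_I\otimes l_I\otimes l_G\otimes K$, where $K=\phi_{T_1}(\cdot\mid\cdot,\cdot)\phi_Wp_C$ is a genuine density with $\int K=1$. Since the Hellinger affinity cannot decrease under a push-forward, $\rho_H(U,L)^2$ is at most the squared Hellinger distance of these two product functions; a telescoping decomposition then bounds the latter by a universal constant times $\rho_H(u_I,l_I)^2+\rho_H(u_G,l_G)^2$ (the two incubation slots account for the leading numerical factor), the stray normalizations $\int u_I,\int u_G$ staying within a bounded factor of $1$ throughout the relevant $\delta$-range. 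The outcome is an estimate $\rho_H(U,L)^2\le\big(c_1(m_1+1)+c_2(m_2+1)\big)\delta^2$ (taking $\delta_1=\delta_2=\delta$) for universal constants $c_1,c_2$. I emphasize that using the affinity contraction here, rather than the cruder bound $\rho_H(U,L)^2\le\int(U-L)$, is what keeps the size at order $\delta^2 m$ (and hence the final logarithm at $\log(\cdot/\epsilon)$ rather than $\log(\cdot/\epsilon^2)$).

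Finally I would count. Solving $\big(c_1(m_1+1)+c_2(m_2+1)\big)\delta^2\le\epsilon^2$ for $\delta$ and using the standard covering bound $N(\delta,S^{m},\|\cdot\|_2)\le(c_0/\delta)^{m}$ for the $m$-dimensional sphere, a product net of $S^{m_1}\times S^{m_2}$ furnishes all brackets and gives $\log\mathcal{N}_{[]}(\epsilon,\mathcal{F}_{m_1,m_2},\rho_H)\le(m_1+m_2)\log\big(c_0\sqrt{c_1(m_1+1)+c_2(m_2+1)}/\epsilon\big)$. Absorbing the universal constants and using $m_1,m_2\ge1$ to replace $m_1+1$ and $m_2+1$ by multiples of $m_1m_2^2$ collapses the argument of the logarithm to $\pi\sqrt{15m_1m_2^2}/\epsilon$, while the hypothesis $\epsilon\le(15m_1m_2^2/4)^{1/2}$ guarantees this argument is at least $2\pi>1$, so the logarithm is positive. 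This is exactly the claimed inequality. The only genuinely non-routine ingredient is the affinity/normalization bookkeeping of the lifting step; the Lipschitz estimate, the single-family brackets, and the sphere covering are all standard once the envelope integral $m+1$ is in hand.
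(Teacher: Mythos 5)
Your proposal is correct and follows essentially the same route as the paper: a $(c/\delta)^{m_1+m_2}$ net of the parameter spheres, brackets attached to each cell, and a bound on the bracket size obtained by pushing the Hellinger distance back to the product space of $(I_1,I_2,G)$ (your ``affinity contraction under push-forward'' is exactly the paper's reverse-triangle-inequality-for-$L^{1/\alpha}$-plus-substitution step at $\alpha=1/2$), followed by Cauchy--Schwarz and Laguerre orthonormality. The only differences are executional --- explicit envelope brackets lifted by monotonicity of the density formula instead of pointwise $\inf/\sup$ brackets over each cell, and a tighter $O(\delta^2(m_1+m_2))$ bookkeeping in place of the paper's $10\delta^2 m_1m_2^2$ --- plus the minor caveat that you assert rather than verify that your unspecified universal constants collapse to the stated $\pi\sqrt{15}$.
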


This Lemma is also proven in Section \ref{subsec:lemma_proofs} in the Appendix. We have now all ingredients together to prove the main result of this paper.

\begin{proof}[Proof of Theorem \ref{thm:consistency}]
This theorem is a consequence of Theorem 4 from \citet{WS95}. For the convenience of the reader, we have stated the result in Section \ref{subsec:WongShen} in the Appendix as Theorem \ref{thm:WS95}. Since we are interested in an asymptotic result we may assume below that $n\geq N_0$.

We apply Theorem \ref{thm:WS95} with $Y_i=(S_{1,i},S_{2,i},W_i,C_i)$ and $\mathcal{F}_n=\mathcal{F}_{m_{1,n},m_{2,n}}$. We show next that the entropy condition is fulfilled: We note firstly that $(15m_{1,n}m_{2,n}^2)^{\frac{1}{2}}\to\infty$ and $\epsilon_n\to0$. Hence, the condition $\epsilon_n\leq (15m_{1,n}m_{2,n}^2)^{1/2}$ is eventually fulfilled and we may apply Lemma \ref{lem:bracketing}. Thus, we obtain for a suitable $c_2<\infty$
\begin{align*}
&\frac{1}{\sqrt{n}\epsilon_n^2}\int_{\frac{\epsilon_n^2}{2^8}}^{\sqrt{2}\epsilon_n}\sqrt{\log\mathcal{N}_{[]}\left(\frac{u}{c_1},\mathcal{F}_{m_{1,n},m_{2,n}},\rho_H\right)}du \\
\leq&\frac{1}{\sqrt{n}\epsilon_n^2}\int_{\frac{\epsilon_n^2}{2^8}}^{\sqrt{2}\epsilon_n}\sqrt{(m_{1,n}+m_{2,n})\log\left(\frac{c_1\pi\sqrt{15m_{1,n}m_{2,n}^2}}{u}\right)}du \\
\leq&\frac{\sqrt{2}\epsilon_n}{\sqrt{n}\epsilon_n^2}\sqrt{(m_{1,n}+m_{2,n})\log\left(\frac{2^8c_1\pi\sqrt{15m_{1,n}m_{2,n}^2}}{\epsilon_n^2}\right)} \\
\leq&2n^{-\frac{1}{2}+\gamma+\frac{1}{2}\beta}\sqrt{\log\left(2^8c_1\pi\sqrt{15}n^{\frac{3}{2}\beta+2\gamma}\right)}\leq c_2,
\end{align*}
because $-\frac{1}{2}+\gamma+\frac{1}{2}\beta\leq0$. Hence, the entropy condition of Theorem \ref{thm:WS95} is fulfilled. Moreover, by the assumptions and Lemma \ref{lem:rho} we find that
$$\delta_n(\alpha)\leq\rho_{\alpha}(f_{\phi_I,\phi_G}p_C,f_{\phi_{I,n},\phi_{G,n}}p_C)\to0.$$
Hence, we eventually have $\delta_n(\alpha)<1/\alpha$. Under these conditions, we have that $\epsilon_n^*(\alpha)\to0$ and $n\epsilon_n^*(\alpha)^2\geq n\epsilon_n^2=n^{1-2\gamma}\to\infty$ since $\gamma<1/2$. Hence, we conclude from Theorem \ref{thm:WS95} that $\rho_H(f_{\phi_I,\phi_G}p_C,f_{\hat{\phi}_{I,n},\hat{\phi}_{G,n}}p_C)=O_P(\epsilon_n^*(\alpha))$ and the proof of the theorem is complete since
$$\epsilon_n^*(\alpha)=O(\max(\epsilon_n,\max(\rho_H(\phi_I,\phi_{I,n}),\rho_H(\phi_G,\phi_{G,n}))^{\alpha})$$
by Lemma \ref{lem:rho}.
\end{proof}

\section{Empirical Studies}
\label{sec:empirical_example}
The analytic evaluation of the likelihood presented in Lemma \ref{lem:likelihood} is very tedious. Therefore, the following results were obtained by numerically approximating the integrals. As illustrated by the simulations below this approximation does not cause problems in the estimation. However, we believe that a speed-up of the method is possible if the integrals are analytically computed. In order to efficiently enforce the constraint that $\|\theta_1\|_2=\|\theta_2\|_2=1$, we optimize the angles of the polar-coordinates of $\theta_1$ and $\theta_2$ and fix their radii to $1$. The angles can then be optimized under the box constraint $[0,\pi]$ (note that $\theta$ and $-\theta$ yield the same model). Finally, we note that the likelihood can have singularities because the Laguerre densities \eqref{eq:def_phi_theta} can be zero. We overcome the resulting problems in the optimisation, similar to \citet{ZD08}, by repeating the optimisation of the likelihood with several randomly chosen starting values. Below we always considered $10$ different, random starting values.

We begin with an illustration of our methodology with synthetic data in Section \ref{subsec:simulation} followed by a brief analysis of a real-world dataset in Section \ref{subsec:real_data}. The R-code which is used for both parts is available on github (\href{https://github.com/akreiss/SemiParametric-Laguerre.git}{https://github.com/akreiss/SemiParametric-Laguerre.git}).

\subsection{Simulation Study}
\label{subsec:simulation}
It appears to be very difficult to obtain data on transmission pairs, e.g. an early dataset of \cite{FWK20} contains only $40$ transmission pairs (the data is available on the website of the journal on \href{https://doi.org/10.1126/science.abb6936}{https://doi.org/10.1126/science.abb6936} in the supplementary material section). To this end, we regard it useful to illustrate in the simulation experiments that our methodology works also for small datasets and we choose $n=40$ observations as well. In this simulation study we use as a data generating process the model as in \eqref{eq:mod} with the following choices:

\begin{center}
\begin{tabular}{l|l}
$\phi_W$     & exponential with rate $\lambda=0.3820225$ \\
$p_C$        & $\IP(C=0)=0.65$ and $\IP(C=1)=0.35$ \\
$\phi_{T_1}$ & $\phi_{T_1}(t|w,c)=\Ind(t\in[0,w])\left\{\begin{array}{lll} \frac{re^{-r(w-t)}}{1-e^{-rw}} &,& c=1 \\ \frac{1}{w} &,& c=0 \end{array}\right..$ \\
$r$          & $r=\log(2)/5$ \\
$\phi_I$     & $\phi_{I,0}=$log-normal distribution: meanlog$=1.644$, standard dev.$=0.363$ \\
$\phi_G$     & $\phi_{G,0}=$Weibull distribution: shape$=2.826$, scale$=5.665$
\end{tabular}
\end{center}

All quantities above are chosen to imitate the data used in \citet{FWK20}: The average window length $\IE(W_i)$ equals roughly the average length of the observed exposure windows in the dataset of \citet{FWK20}. Likewise, in their dataset the authors distinguish two locations, those with an exponential growth of cases with growth rate $\log(2)/5$ (in roughly 35\% of the cases) and those with no exponential growth (in roughly 65\% of the cases). Moreover, the density of the incubation period $\phi_I=\phi_{I,0}$ is the distribution of incubation times fitted by \citet{FWK20,LGB20}. Lastly, the density of the generation time is taken from \citet{FWK20}. We show firstly in Figure \ref{fig:approx} that these densities can be well approximated through Laguerre type densities as defined in \eqref{eq:def_phi_theta}. The graphs show those Laguerre densities which minimize the Hellinger distance to the true densities. Not-surprisingly, larger choices of the degrees $m_1$ and $m_2$ lead to better approximations. However, we also see that, in both cases, already degree two yields reasonable approximations. Note in particular that the flat beginning of the density of the incubation period can be well captured by the approximating densities.

\begin{figure}
\includegraphics[width=\textwidth]{./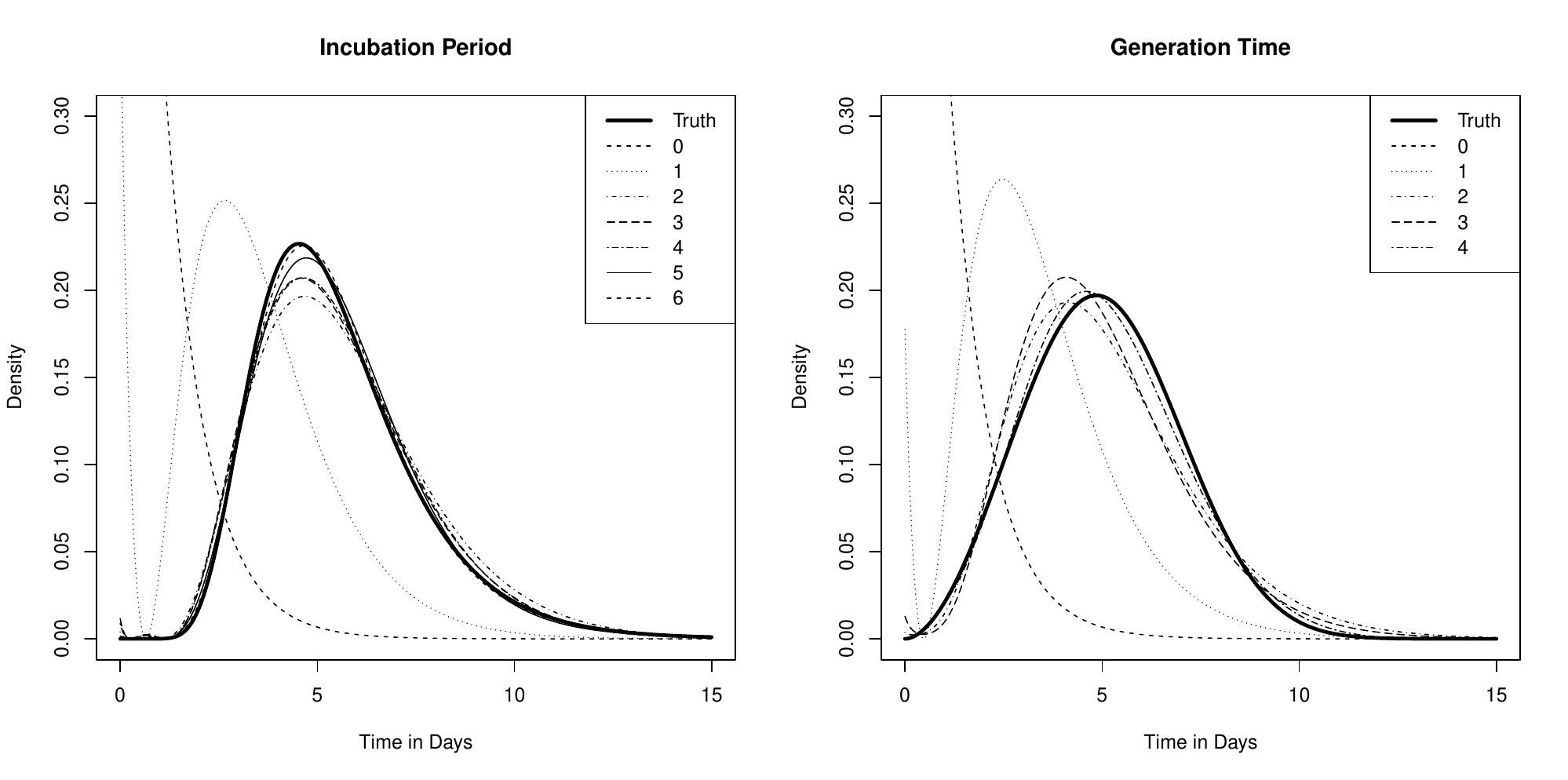}
\caption{Best approximations of $\phi_I$ (left) and $\phi_G$ (right) through  Laguerre densities of the form \eqref{eq:def_phi_theta} for various choices of $m_1$ and $m_2$.}
\label{fig:approx}
\end{figure}

In order to determine the degree of the approximating Laguerre polynomials, we use the information criterion BIC. Since the computations are quite intensive we do the model selection only for one dataset and choose the selected model for all subsequent repetitions. The resulting values of the BIC are shown in Table \ref{tab:BIC_synthetic} and it can be seen that the minimal value is obtained for $m_1=2$ and $m_2=2$. For the remainder of this Section, we use these degrees.

\begin{table}
\centering
\begin{tabular}{llllll}
\toprule
     &   & \multicolumn{4}{c}{$m_2$}        \\
     &   &    1    & 2     &  3     &   4    \\
\toprule
\multirow{4}{*}{$m_1$} & 1 & 486.77 & 421.91 & 418.82 & 422.37 \\ 
                       & 2 & 413.26 & 395.89 & 399.58 & 403.27 \\
                       & 3 & 416.95 & 397.70 & 401.10 & 404.55 \\
                       & 4 & 418.39 & 401.02 & 404.71 & 408.19 \\
\bottomrule
\end{tabular}
\caption{BIC values for one synthetic dataset}
\label{tab:BIC_synthetic}
\end{table}

We simulate now $N=1,000$ datasets according to the above specified data generating process and fit our model to it. Figure \ref{fig:n40_densities} shows the true densities as solid blue lines and corresponding closest Laguerre densities \eqref{eq:def_phi_theta} when choosing degrees $m_1=m_2=2$, respectively. The shaded areas show point-wise confidence bands (constructed based on simulations), e.g. $90\%$ of the estimators lie point-wise in the confidence band of level $90\%$. Different confidence levels ($99$, $95$, $90$, $85$, $80$, $75$, $70$, $65$, $60$) are indicated by different intensities of gray (from light gray to black). It can be seen that the estimation works visibly quite well for the incubation period. The height of the mode is underestimated but this comes from the fact that fitting an order two Laguerre density cannot do better. The estimation for the generation time works a bit less good but still the general trend is captured well by most estimators if we keep in mind that we have here only $40$ observations of a heavily convolved variable. In order to assess the fit of the non-parametric estimator a bit more formally, we compare the estimators to the true densities in terms of the squared Hellinger-distance $\rho_H^2$. The resulting histogram is shown in Figure \ref{fig:n40_hellinger}. Note that squared Hellinger distances are bounded from above by $2$. It can be seen that the estimation of the Incubation Period works better than the estimation of the Generation Time. This is not surprising because we observe transmission pairs each of which contains two independent realisations of incubation periods but only one generation time. Overall the fit of the generation time appears to be fairly good ($90\%$ of the distances are smaller than $0.103$ and $50\%$ of the distances are smaller than $0.034$).

\begin{figure}
\includegraphics[width=\textwidth]{./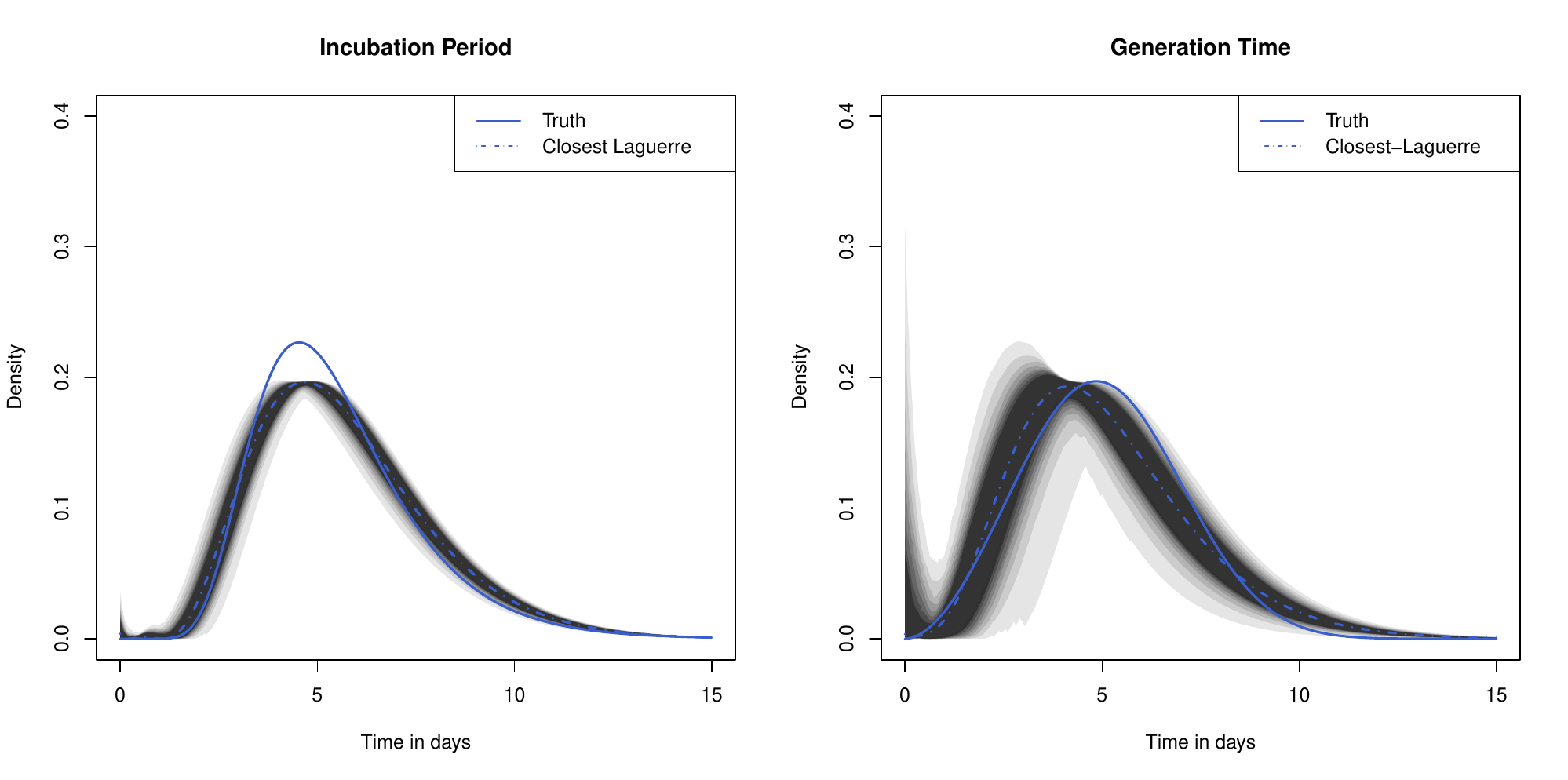}
\caption{Estimation results for simulated data with $n=40$ observations. True densities are shown as solid blue lines and the dashed lines show the Laguerre densities which come closest to the true densities. The shaded areas show the point-wise simulated confidence areas from $99\%$ (lightest gray) over $95\%$ in $5\%$-steps to $60\%$ in black.}
\label{fig:n40_densities}
\end{figure}

\begin{figure}
\includegraphics[width=\textwidth]{./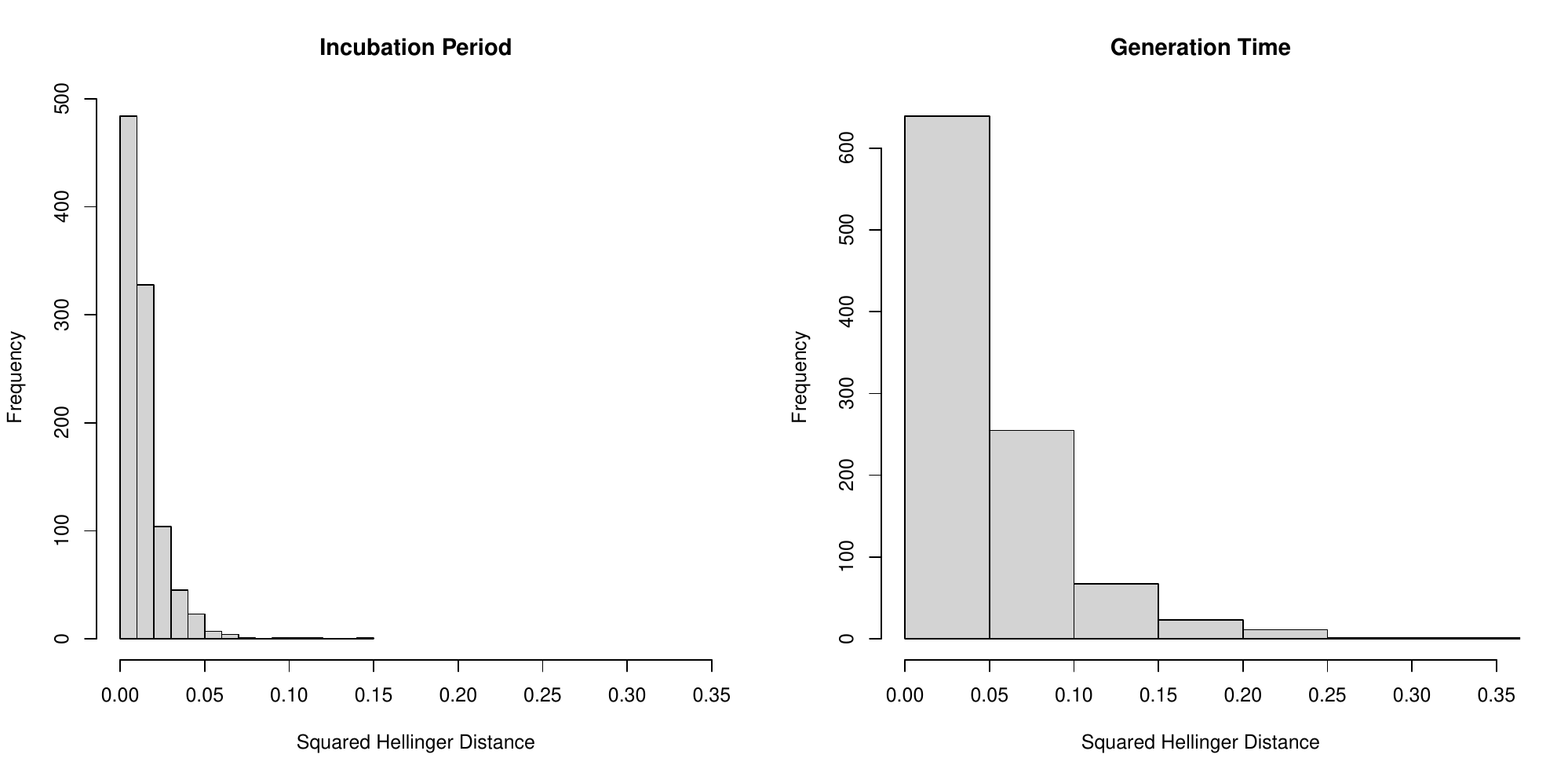}
\caption{Histograms of squared Hellinger distances of estimates to true densities.}
\label{fig:n40_hellinger}
\end{figure}

In order to illustrate that the corresponding plug-in feature estimates enjoy asymptotic normality properties, we consider estimation of the basic reproduction number of a fictional pandemic and estimation of quantiles. In order to estimate the basic reproduction number, we plug the true and the estimated generation time densities into equation \eqref{eq:defF} and take the inverse of it (in this fictional pandemic we choose $r=\log(2)/5$ which means that the case numbers double every five days). The histogram of the estimates is shown in Figure \ref{fig:R0_estimation}. We emphasize that these are simulated results and we cannot draw any conclusions about COVID-19. We can see that the estimation appears to be almost unbiased even in finite samples. This is remarkable because the estimator is based on a non-parametric estimator which is biased (we do not control for over- or under-smoothing). Moreover, the approximation through a normal distribution seems to be reasonably accurate. This motivates further research in establishing a formal asymptotic normality result for this type of semi-parametric inference. Figure \ref{fig:n40_03quant} shows the histograms of the estimated 30\%-quantiles. We can see that the estimates appear to have a bias in finite samples (the dashed lines indicate the true quantiles). This bias seems to stem from the fact that the approximation through low dimensional Laguerre polynomials is not perfect because the estimates centralize around the quantiles of the best Laguerre approximations (dotted lines). This effect remains true for other quantiles which are reported in Appendix \ref{subsubsec:appendix_simstud}.

\begin{figure}
\centering
\includegraphics[width=0.5\textwidth]{./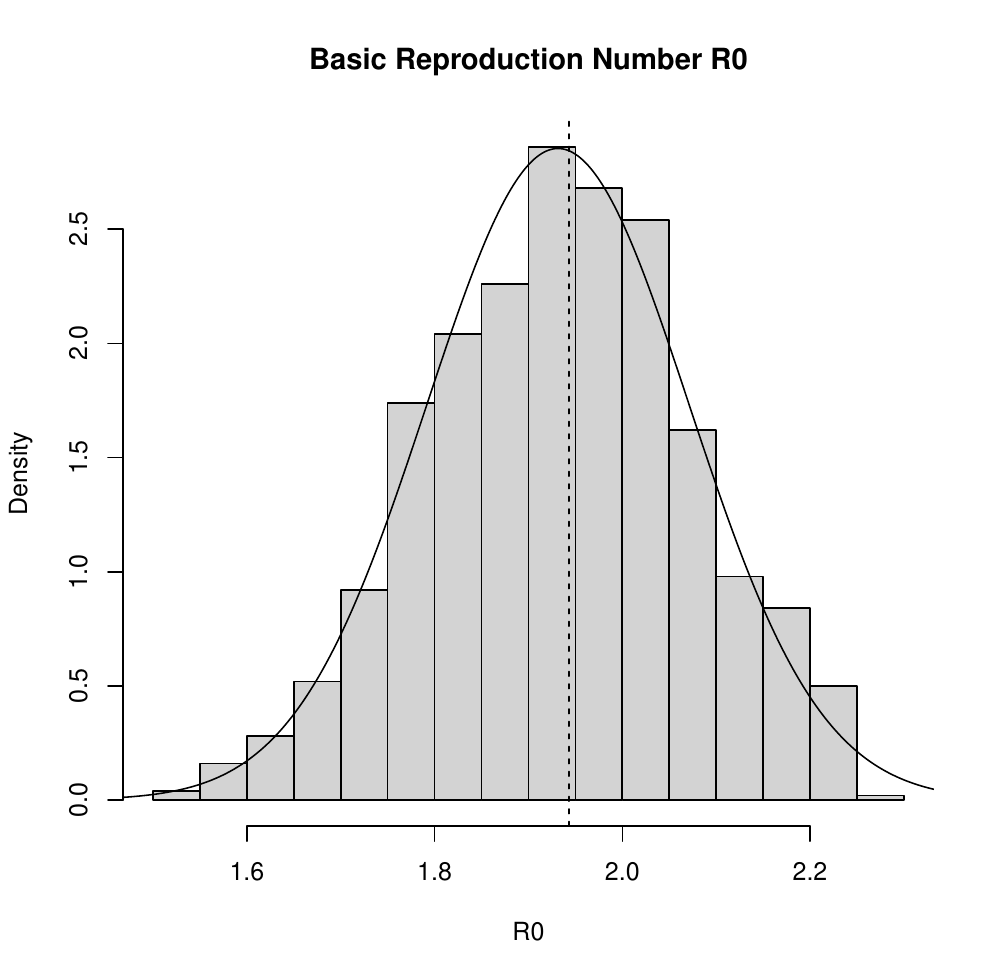}
\caption{Histograms of estimated basic reproduction numbers of fictional pandemic. The dotted line indicates the true $R_0$ and the solid line is the density of a normal distribution with the sample mean and standard deviation.}
\label{fig:R0_estimation}
\end{figure}

\begin{figure}
\centering
\includegraphics[width=\textwidth]{./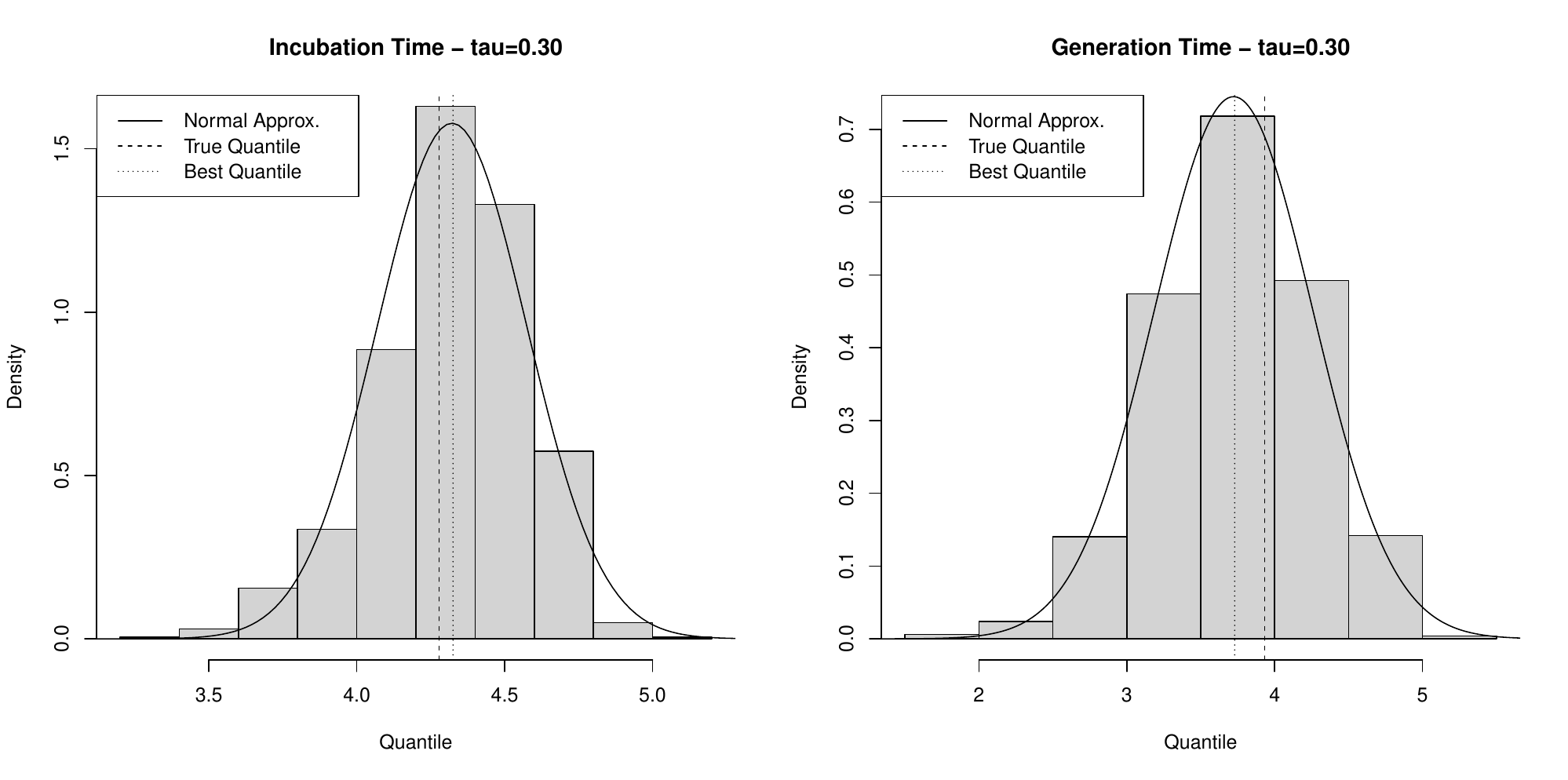}
\caption{Histograms of estimated 30\%-quantile for the incubation time (left) and generation time (right). The dashed lines indicate the respective true values, the dotted lines the quantiles of the best approximating densities and the solid line is the density of a normal distribution with the corresponding sample mean and standard deviation.}
\label{fig:n40_03quant}
\end{figure}

When inspecting the BIC values in Table \ref{tab:BIC_synthetic} we see that the next smallest scores are obtained for the models $(m_1=2,m_2=3)$ and $(m_1=3,m_2=2)$. In order to check the robustness of the model selection, we also inspect the next smallest BIC model, i.e. $(m_1=3,m_2=2)$. The results are similar and are shown in Appendix \ref{subsubsec:appendix_simstud}.

\subsection{Real Data Application}
\label{subsec:real_data}
Finally we apply our methodology to a dataset containing 191 transmission pairs which has been studied by \citet{HMT21,FLWZ20} and can be downloaded from \href{https://elifesciences.org/articles/65534/figures#content}{https://elifesciences.org/articles/65534/ figures\#content} (see the source data corresponding to Figure 2). The data-set is a compilation of five data-sets: \citet{FWK20,HLW20,XLL20,CJL20,ZLW20}. In all data-sets the authors had access to or collected data on transmission pairs. In all cases the authors ensure that the transmission pairs are indeed true transmission pairs e.g. by examining contact and travel histories or quarantines of the involved people. The data was collected with different targets concerning the transmissibility of SARS-COV-2. Thus we can reasonably illustrate our methodology on this dataset: Semi-parametric estimation of features of the generation time and the incubation time.

The dataset contains symptom onset dates for all transmission pairs, but the exposure window is not always reported. In that case we impute the dataset in the same way as \citet{FWK20}: The beginning of the exposure window is at the earliest 60 days before symptom onset of the infector. The end of the infection window is at the latest the symptom onset time of any of the two people in the pair or the end of the exposure window of the second person which is reported in some cases. Since the data is discrete, i.e. we know only the days of symptom onset rather than the exact time, we suppose that the exact time is uniformly distributed throughout the day. Therefore we add a uniform random time between $0$ and $24$ hours to the symptom onset times and exposure window end points. We stress that our interest lies in the theoretical analysis of the methodology and we provide here an illustration for how our methodology can be used. A complete data analysis would for example also require a robustness analysis against potential issues like the question whether the transmission pairs are random samples from the pandemic.

In the following our aim is to use our semi-parametric approach to construct a test whether the parametric fit suggested by \citet{FWK20} is appropriate for the data. Let $\phi_{I,0}$ and $\phi_{G,0}$ denote the densities as defined in Section \ref{subsec:simulation}. More formally, we would like to test the hypotheses
$$H_0^{(I)}: \phi_I=\phi_{I,0}\quad\textrm{and}\quad H_0^{(G)}:\phi_G=\phi_{G,0}.$$
As test statistics we consider $\rho_H(\hat{\phi}_{I,n},\tilde{\phi}_{I,0})^2$ and $\rho_H(\hat{\phi}_{G,n},\tilde{\phi}_{G,0})^2$, where $\tilde{\phi}_{I,0}$ and $\tilde{\phi}_{G,0}$ denote the best approximations with respect to the Hellinger distance of $\phi_{I,0}$ and $\phi_{G,0}$ through Laguerre polynomials, respectively. In order to choose the degrees of the approximation we use in the same way as before the BIC. The resulting values are shown in Table \ref{tab:BIC_real} and it can be seen that $m_1=4$ and $m_2=3$ yields the smallest BIC. Figure \ref{fig:approx} shows that $m_1=4$ yields already a good approximation to $\phi_{I,0}$, similarly we see that $m_2=3$ allows a good approximation of $\phi_{G,0}$. However, in both cases, the representation is not perfect. Therefore, it is very important that we compare the estimate with the closest Laguerre-type density $\tilde{\phi}_{G,0}$ rather than with $\phi_{G,0}$ directly.

\begin{table}
\centering
\begin{tabular}{llllllll}
\toprule
 &   & \multicolumn{6}{c}{$m_2$}                                 \\
 &   & 1       & 2       & 3       & 4       & 5       & 6       \\ 
\toprule
\multirow{6}{*}{$m_1$} & 1 & 1572.22 & 1474.16 & 1460.14 & 1455.04 & 1460.29 & 1459.01 \\ 
 & 2 & 1452.06 & 1423.16 & 1408.44 & 1413.69 & 1418.70 & 1417.45 \\ 
 & 3 & 1436.92 & 1409.72 & 1411.41 & 1412.70 & 1417.56 & 1421.27 \\ 
 & 4 & 1424.08 & 1408.84 & 1405.98 & 1410.06 & 1415.09 & 1415.36 \\ 
 & 5 & 1425.95 & 1412.56 & 1408.15 & 1413.13 & 1418.06 & 1420.15 \\ 
 & 6 & 1428.97 & 1414.82 & 1413.40 & 1417.10 & 1422.33 & 1424.50 \\ 
\bottomrule
\end{tabular}
\caption{BIC values for transmission pair data}
\label{tab:BIC_real}
\end{table}

In order to assess the distribution of the test statistic under $H_0^{(I)}$, $H_0^{(G)}$ or $H_0^{(I)}\cap H_0^{(G)}$, we suppose that the densities of the incubation period and generation time are given by $\phi_{I,0}$ and $\phi_{G,0}$, respectively, and simulate data accordingly including the rounding to complete days and adding uniform noise (note that when testing for $H_0^{(I)}$ or $H_0^{G)}$ we suppose that the respective other density is correctly specified). We do this $N=1,000$ times and show in Figure \ref{fig:hellinger_data} the histograms of the squared Hellinger distances between the estimated densities and those Laguerre densities of type \eqref{eq:def_phi_theta} (with $m_1=4$ and $m_2=3$) which lie closest to the true densities. Note, that in Figure \ref{fig:n40_hellinger} we compared the estimates with the true densities, so both figures cannot be compared. The observed squared Hellinger distances from the dataset are $0.05408553$ for the incubation period and $0.03635778$ for the generation time. These values are shown as vertical lines in Figure \ref{fig:hellinger_data}. In our simulations none of the simulated Hellinger distances for the incubation period are larger than the observed distance.  Consequently, in none of the simulated cases both distances are simultaneously larger than the observed distances. For the generation time $44.8\%$ of the simulated distances are larger than the observed distance. We conclude that the data-set we considered here suggests that the suggested parametric fit for the incubation time might not be correct. For the generation time, the data-set contains no evidence which suggests that the parametric fit might be wrong.

\begin{figure}
\includegraphics[width=\textwidth]{./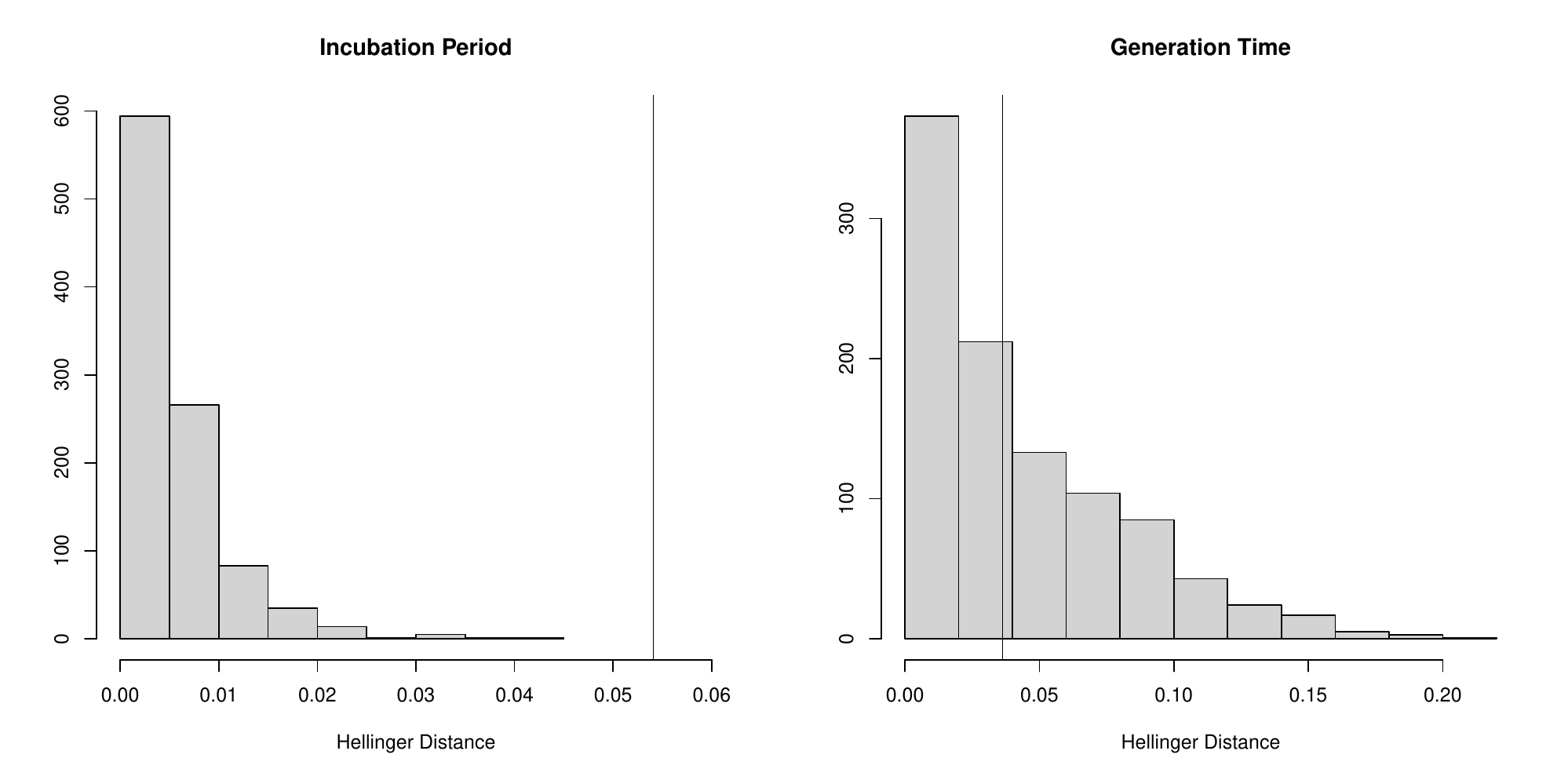}
\caption{Histograms of squared Hellinger distances of $N=1,000$ simulations from the model under $H_0^{(I)}\cap H_0^{(G)}$. Distances are computed between the estimated model and the closest Laguerre type model. The vertical lines indicate the observed values of the test statistic.}
\label{fig:hellinger_data}
\end{figure}

In Table \ref{tab:BIC_real} we see that the BIC values for all choices of $m_1=3,4,5$ and $m_2=2,3,4$ are all very similar. In order to check the robustness of the method to the model selection, we provide in Section \ref{subsubsec:appendix_datapp} in the Appendix a similar analysis for the second smallest BIC: $m_1=5$ and $m_2=3$.

We stress that the above analysis should be understood as a recipe for a data analysis rather than an in-depth analysis of the provided dataset. Other types of semi-parametric analyses like the ones outlined in Section \ref{subsec:simulation} can be implemented in a similar way.

\section{Conclusion and Related Questions}
\label{sec:conclusion}
We have introduced a semi-parametric estimator for the generation time and incubation period from observational data. We have shown that both distributions can be identified from the observations and we presented a simple, consistent semi-parametric estimator which is based on Laguerre polynomials. These results are the first steps for more results in the general realm of semi-parametric inference for epidemics: As specific examples we mention the reproduction number $R_0$ and tests for parametric assumptions. But also the probability of pre-symptomatic infection $\IP(G\leq I_1)$ can be of interest. All these quantities are continuous functions of the densities $\phi_I$ and $\phi_G$ and therefore it can be expected that asymptotic normality results for estimators based on our estimators $\hat{\phi}_{I,n}$ and $\hat{\phi}_{G,n}$ can be proven. However, it should be mentioned that such results for $R_0$ are possibly more challenging because they require another estimator $\hat{r}_n$.

This work can be extended in several directions. It might for example not be clear when symptoms exactly start. Therefore, it might be possible that just a window for symptom onset can be supplied (\citet{LGB20}). Moreover, it can be of interest to include asymptomatic patients by including a certain \emph{cure-probability}, i.e. the probability with which patients will never show symptoms. As an alternative one could also consider follow up studies in which the symptom onset time of patients is considered as a censored variable (in which case asymptomatic patients can be interpreted as patients who show symptoms at $\infty$). Finally, the dataset by \citet{FWK20} includes also a window for infection of the second person. It would of course be interesting to include this information in the model. But we would like to point out that this is not entirely trivial because it is unclear how to model the time of infection within this window. As we motivated in the discussion after Remark \ref{rem:location} a simple uniformity assumption is possibly difficult to justify. Therefore, we would suggest to include this distribution in the estimation in a suitable way. In theoretic terms, the most interesting question would certainly be to establish asymptotic normality results which allow the researcher to make quantitative statements. Such statements can for example be achieved by following the semi-parametric framework as e.g. in \citet{S97}. Another interesting question would be how to adequately incorporate covariates in the model. It could be the case that e.g. younger and older people have different incubation periods or generation times. Finally, in a different branch of literature, one tries to avoid the assumption of independence between symptom onset and infection and rather models the infection time relative to the symptom onset (cf. \citet{HMT21,FLWZ20,CBK20}). Our methodology can be applied to this setting as well with some adjustments of the theory.

\section*{Acknowledgements}
We are grateful to Niel Hens for useful discussions. Moreover we thank an unknown associate editor and two anonymous referees for reading and commenting our paper. Their remarks have lead to a great improvement of the paper.

\newpage
\bibliography{mybib}{}

\begin{thebibliography}{38}
\providecommand{\natexlab}[1]{#1}
\providecommand{\url}[1]{\texttt{#1}}
\expandafter\ifx\csname urlstyle\endcsname\relax
  \providecommand{\doi}[1]{doi: #1}\else
  \providecommand{\doi}{doi: \begingroup \urlstyle{rm}\Url}\fi

\bibitem[Ai and Chen(2003)]{AC03}
C.~Ai and X.~Chen.
\newblock Efficient estimation of models with conditional moment restrictions
  containing unknown functions.
\newblock \emph{Econometrica}, 71\penalty0 (6), 2003.

\bibitem[Azmon et~al.(2014)Azmon, Faes, and Hens]{AFH14}
A.~Azmon, C.~Faes, and N.~Hens.
\newblock On the estimation of the reproduction number based on misreported
  epidemic data.
\newblock \emph{Statistics in medicine}, 33\penalty0 (7):\penalty0 1176--1192,
  2014.

\bibitem[Bi~et al(2020)]{Bietal20}
Q.~Bi~et al.
\newblock Epidemiology and transmission of {C}ovid-19 in 391 cases and 1286 of
  their close contacts in {S}henzhen, {C}hina: A retrospective cohort study.
\newblock \emph{The Lancet Infectious Diseases}, 20\penalty0 (8):\penalty0
  911--919, 2020.

\bibitem[Billingsley(1968)]{B68}
P.~Billingsley.
\newblock \emph{Convergence of probability measures}.
\newblock John Wiley and Sons, 1968.

\bibitem[Britton and Scalia~Tomba(2019)]{BS19}
T.~Britton and G.~Scalia~Tomba.
\newblock Estimation in emerging epidemics: biases and remedies.
\newblock \emph{J R Soc Interface}, 16\penalty0 (150), 2019.

\bibitem[Carroll et~al.(2006)Carroll, Ruppert, Stefanski, and
  Crainiceanu]{CRSC06}
R.~J. Carroll, D.~Ruppert, L.~A. Stefanski, and C.~Crainiceanu.
\newblock \emph{Measurement error in nonlinear models: A modern perspective}.
\newblock Chapman and Hall/CRC, 2006.

\bibitem[Chen(2007)]{C07}
X.~Chen.
\newblock Chapter 76 large sample sieve estimation of semi-nonparametric
  models.
\newblock volume~6 of \emph{Handbook of Econometrics}, pages 5549--5632.
  Elsevier, 2007.

\bibitem[Chen and Shen(1998)]{CS98}
X.~Chen and X.~Shen.
\newblock Sieve extremum estimates for weakly dependent data.
\newblock \emph{Econometrica}, 66\penalty0 (2), 1998.

\bibitem[Cheng et~al.(2020)Cheng, Jian, Liu, Ng, Huang, Lin, and for~the Taiwan
  COVID-19 Outbreak Investigation~Team]{CJL20}
H.-Y. Cheng, S.-W. Jian, D.-P. Liu, T.-C. Ng, W.-T. Huang, H.-H. Lin, and
  for~the Taiwan COVID-19 Outbreak Investigation~Team.
\newblock {Contact Tracing Assessment of COVID-19 Transmission Dynamics in
  Taiwan and Risk at Different Exposure Periods Before and After Symptom
  Onset}.
\newblock \emph{JAMA Internal Medicine}, 180\penalty0 (9):\penalty0 1156--1163,
  2020.

\bibitem[Chowell et~al.(2009)Chowell, Hayman, Bettencourt, and
  Castolli-Chavez]{CHBCC09}
G.~Chowell, J.~M. Hayman, L.~M.~A. Bettencourt, and C.~Castolli-Chavez.
\newblock \emph{Mathematical and statistical estimation approaches in
  epidemiology}.
\newblock Springer, 2009.

\bibitem[Chun et~al.(2020)Chun, Baek, and Kim]{CBK20}
J.~Y. Chun, G.~Baek, and Y.~Kim.
\newblock Transmission onset distribution of covid-19.
\newblock \emph{International Journal of Infectious Diseases}, 99:\penalty0
  403--407, 2020.
\newblock ISSN 1201-9712.
\newblock \doi{https://doi.org/10.1016/j.ijid.2020.07.075}.

\bibitem[Devroye(1989)]{D89}
L.~Devroye.
\newblock Consistent deconvolution in density estimation.
\newblock \emph{The Canadian Journal of Statistics / La Revue Canadienne de
  Statistique}, 17\penalty0 (2):\penalty0 235--239, 1989.

\bibitem[Fan(1991)]{F91}
J.~Fan.
\newblock On the optimal rates of convergence for nonparametric deconvolution
  problems.
\newblock \emph{Annals of Statistics}, 19\penalty0 (3):\penalty0 1257--1272, 09
  1991.

\bibitem[Ferretti et~al.(2020{\natexlab{a}})Ferretti, Ledda, Wymant, Zhao,
  Ledda, Abeler-D{\"o}rner, Kendall, Nurtay, Cheng, Ng, Lin, Hinch, Masel,
  Kilpatrick, and Fraser]{FLWZ20}
L.~Ferretti, A.~Ledda, C.~Wymant, L.~Zhao, V.~Ledda, L.~Abeler-D{\"o}rner,
  M.~Kendall, A.~Nurtay, H.-Y. Cheng, T.-C. Ng, H.-H. Lin, R.~Hinch, J.~Masel,
  A.~M. Kilpatrick, and C.~Fraser.
\newblock The timing of covid-19 transmission.
\newblock \emph{medRxiv}, 2020{\natexlab{a}}.
\newblock \doi{10.1101/2020.09.04.20188516}.

\bibitem[Ferretti et~al.(2020{\natexlab{b}})Ferretti, Wymant, Kendall, Zhao,
  Nurtay, Abeler-D{\"o}rner, Parker, Bonsall, and Fraser]{FWK20}
L.~Ferretti, C.~Wymant, M.~Kendall, L.~Zhao, A.~Nurtay, L.~Abeler-D{\"o}rner,
  M.~Parker, D.~Bonsall, and C.~Fraser.
\newblock {Q}uantifying sars-cov-2 transmission suggests epidemic control with
  digital contact tracing.
\newblock \emph{Science}, 368\penalty0 (6491), 2020{\natexlab{b}}.

\bibitem[Ganyani et~al.(2020)Ganyani, Kremer, Chen, Torneri, Faes, Wallinga,
  and Hens]{Getal20}
T.~Ganyani, C.~Kremer, D.~Chen, A.~Torneri, C.~Faes, J.~Wallinga, and N.~Hens.
\newblock Estimating the generation interval for coronavirus disease
  ({C}ovid-19) based on symptom onset data, {M}arch 2020.
\newblock \emph{Eurosurveillance}, 25\penalty0 (17), 2020.
\newblock \doi{https://doi.org/10.2807/1560-7917.ES.2020.25.17.2000257}.

\bibitem[Groeneboom(2021)]{G21}
P.~Groeneboom.
\newblock Estimation of the incubation time distribution for {COVID}-19.
\newblock \emph{Statistica Neerlandica}, 75\penalty0 (2), 2021.

\bibitem[H{\"a}rdle and Mammen(1993)]{HM93}
W.~H{\"a}rdle and E.~Mammen.
\newblock Comparing nonparametric versus parametric regression fits.
\newblock \emph{Annals of Statistics}, 21\penalty0 (4):\penalty0 1926--1947, 12
  1993.
\newblock \doi{10.1214/aos/1176349403}.

\bibitem[Hart et~al.(2021)Hart, Maini, and Thompson]{HMT21}
W.~S. Hart, P.~K. Maini, and R.~N. Thompson.
\newblock High infectiousness immediately before covid-19 symptom onset
  highlights the importance of continued contact tracing.
\newblock \emph{eLife}, 10:\penalty0 e65534, apr 2021.
\newblock \doi{10.7554/eLife.65534}.

\bibitem[He et~al.(202)He, Lau, Wu, Deng, Wang, Hao, Lau, Wong, Guan, Tan, Mo,
  Chen, Liao, Chen, Hu, Zhang, Zhong, Wu, Zhao, Zhang, Cowling, Li, and
  Leung]{HLW20}
X.~He, E.~H.~Y. Lau, P.~Wu, X.~Deng, J.~Wang, X.~Hao, Y.~C. Lau, J.~Y. Wong,
  Y.~Guan, X.~Tan, X.~Mo, Y.~Chen, B.~Liao, W.~Chen, F.~Hu, Q.~Zhang, M.~Zhong,
  Y.~Wu, L.~Zhao, F.~Zhang, B.~J. Cowling, F.~Li, and G.~M. Leung.
\newblock Temporal dynamics in viral shedding and transmissibility of covid-19.
\newblock \emph{Nature Medicine}, 26, 202.

\bibitem[Held et~al.(2019)Held, Hens, O’Neill, and Wallinga]{HHOW19}
L.~Held, N.~Hens, P.~O’Neill, and J.~Wallinga.
\newblock \emph{Handbook of infectious disease data analysis}.
\newblock Chapman and Hall/CRC, 2019.

\bibitem[Lauer et~al.(2020)Lauer, Grantz, Bi, Jones, Zheng, Meredith, Azman,
  Reich, and Lessler]{LGB20}
S.~A. Lauer, K.~H. Grantz, Q.~Bi, F.~K. Jones, Q.~Zheng, H.~R. Meredith, A.~S.
  Azman, N.~G. Reich, and J.~Lessler.
\newblock The incubation period of coronavirus disease 2019 ({C}ovid-19) from
  publicly reported confirmed cases: Estimation and application.
\newblock \emph{Annals of Internal Medicine}, 172\penalty0 (9):\penalty0
  577--582, 2020.
\newblock \doi{10.7326/M20-0504}.

\bibitem[Leavitt et~al.(2020)Leavitt, Lee, Sebastiani, Horsburgh~Jr, Jenkins,
  and White]{LLSHJW20}
S.~V. Leavitt, R.~S. Lee, P.~Sebastiani, C.~R. Horsburgh~Jr, H.~E. Jenkins, and
  L.~F. White.
\newblock Estimating the relative probability of direct transmission between
  infectious disease patients.
\newblock \emph{International Journal of Epidemiology}, 49\penalty0
  (3):\penalty0 764--775, 03 2020.
\newblock \doi{10.1093/ije/dyaa031}.

\bibitem[Lipsitch et~al.(2003)Lipsitch, Cohen, Cooper, Robins, Ma, James,
  Gopalakrishna, Chew, Tan, Samore, Fisman, and Murray]{LCC03}
M.~Lipsitch, T.~Cohen, B.~Cooper, J.~M. Robins, S.~Ma, L.~James,
  G.~Gopalakrishna, S.~K. Chew, C.~C. Tan, M.~H. Samore, D.~Fisman, and
  M.~Murray.
\newblock Transmission dynamics and control of severe acute respiratory
  syndrome.
\newblock \emph{Science}, 300\penalty0 (5627):\penalty0 1966--1970, 2003.

\bibitem[Newey(1997)]{N97}
W.~K. Newey.
\newblock Convergence rates and asymptotic normality for series estimators.
\newblock \emph{Journal of Econometrics}, 79\penalty0 (1), 1997.

\bibitem[Newey and Powell(2003)]{NP03}
W.~K. Newey and J.~L. Powell.
\newblock Instrumental variable estimation of nonparametric models.
\newblock \emph{Econometrica}, 71\penalty0 (5), 2003.

\bibitem[Nikiforov and Uvarov(1988)]{NU88}
A.~F. Nikiforov and V.~B. Uvarov.
\newblock \emph{Special functions of mathematical physics}.
\newblock Birkhäuser, 1988.

\bibitem[Nishiura et~al.(2009)Nishiura, Kakehashi, and Inaba]{NKI09}
H.~Nishiura, M.~Kakehashi, and H.~Inaba.
\newblock Two critical issues in quantitative modeling of communicable
  diseases: Inference of unobservables and dependent happening.
\newblock In \emph{Mathematical and Statistical Estimation Approaches in
  Epidemiology}, pages 53--87. Springer Netherlands, 2009.
\newblock \doi{10.1007/978-90-481-2313-1_3}.

\bibitem[Ossiander(1987)]{O87}
M.~Ossiander.
\newblock A central limit theorem under metric entropy with $l_2$ bracketing.
\newblock \emph{Ann. Probab.}, 15\penalty0 (3):\penalty0 897--919, 1987.

\bibitem[Shen(1997)]{S97}
X.~Shen.
\newblock On methods of sieves and penalization.
\newblock \emph{Annals of Statistics}, 25\penalty0 (6):\penalty0 2555--2591, 12
  1997.

\bibitem[Shiryaev(2016)]{S16}
A.~N. Shiryaev.
\newblock \emph{Probability-1: Volume 1}.
\newblock Springer New York, 2016.
\newblock ISBN 978-0-387-72206-1.

\bibitem[Tindale et~al.(2020)Tindale, Stockdale, Coombe, Garlock, Lau,
  Saraswat, Zhang, Chen, Wallinga, and Colijn]{Tetal20}
L.~C. Tindale, J.~E. Stockdale, M.~Coombe, E.~S. Garlock, W.~Y.~V. Lau,
  M.~Saraswat, L.~Zhang, D.~Chen, J.~Wallinga, and C.~Colijn.
\newblock Evidence for transmission of {C}ovid-19 prior to symptom onset.
\newblock \emph{eLife}, 9:\penalty0 e57149, jun 2020.
\newblock \doi{10.7554/eLife.57149}.

\bibitem[van~der Vaart and Wellner(1996)]{VW96}
A.~W. van~der Vaart and J.~A. Wellner.
\newblock \emph{Weak convergence and empirical process theory}.
\newblock Springer, 1996.

\bibitem[Wallinga and Lipsitch(2007)]{WL07}
J.~Wallinga and M.~Lipsitch.
\newblock How generation intervals shape the relationship between growth rates
  and reproductive numbers.
\newblock \emph{Proceedings of the Royal Society B: Biological Sciences},
  274\penalty0 (1609):\penalty0 599--604, 2007.
\newblock \doi{10.1098/rspb.2006.3754}.

\bibitem[Wong and Shen(1995)]{WS95}
W.~H. Wong and X.~Shen.
\newblock Probability inequalities for likelihood ratios and convergence rates
  of sieve mles.
\newblock \emph{Annals of Statistics}, 23\penalty0 (2):\penalty0 339--362, 04
  1995.

\bibitem[Xia et~al.(2020)Xia, Liao, Li, Li, Qian, Sun, Xu, Mahai, Zhao, Shi,
  Liu, Yu, Wang, Wang, Namat, Li, Qu, Liu, Lin, Cao, Huan, Xiao, Ruan, Wang,
  Xu, Ding, Fang, Qiu, Ma, Zhang, Wang, Xing, and Xu]{XLL20}
W.~Xia, J.~Liao, C.~Li, Y.~Li, X.~Qian, X.~Sun, H.~Xu, G.~Mahai, X.~Zhao,
  L.~Shi, J.~Liu, L.~Yu, M.~Wang, Q.~Wang, A.~Namat, Y.~Li, J.~Qu, Q.~Liu,
  X.~Lin, S.~Cao, S.~Huan, J.~Xiao, F.~Ruan, H.~Wang, Q.~Xu, X.~Ding, X.~Fang,
  F.~Qiu, J.~Ma, Y.~Zhang, A.~Wang, Y.~Xing, and S.~Xu.
\newblock Transmission of corona virus disease 2019 during the incubation
  period may lead to a quarantine loophole.
\newblock \emph{medRxiv}, 2020.
\newblock \doi{10.1101/2020.03.06.20031955}.

\bibitem[Zhang et~al.(2020)Zhang, Litvinova, Wang, Wang, Deng, Chen, Li, Zheng,
  Yi, Chen, Wu, Liang, Wang, Yang, Sun, Longini~Jr, Halloran, Wu, Cowling,
  Merler, Viboud, Vespignani, Ajelli, and Yu]{ZLW20}
J.~Zhang, M.~Litvinova, W.~Wang, Y.~Wang, X.~Deng, X.~Chen, M.~Li, W.~Zheng,
  L.~Yi, X.~Chen, Q.~Wu, Y.~Liang, X.~Wang, J.~Yang, K.~Sun, I.~M. Longini~Jr,
  M.~E. Halloran, P.~Wu, B.~J. Cowling, S.~Merler, C.~Viboud, A.~Vespignani,
  M.~Ajelli, and H.~Yu.
\newblock Evolving epidemiology and transmission dynamics of coronavirus
  disease 2019 outside hubei province, china: a descriptive and modelling
  study.
\newblock \emph{The Lancet Infectious Diseases}, 20\penalty0 (7):\penalty0
  793--802, 2020.

\bibitem[Zhang and Davidian(2008)]{ZD08}
M.~Zhang and M.~Davidian.
\newblock “smooth” semiparametric regression analysis for arbitrarily
  censored time-to-event data.
\newblock \emph{Biometrics}, 64\penalty0 (2):\penalty0 567--576, 2008.
\newblock \doi{https://doi.org/10.1111/j.1541-0420.2007.00928.x}.

\end{thebibliography}
\bibliographystyle{plainnat}

\newpage
\section{Appendix}
\label{sec:appendix}

\subsection{A Consistency Result and Approximation Through Laguerre Polynomials}
\label{subsec:WongShen}
In this section we state two results from the literature which are relevant for this paper.

The following Theorem is just a re-formulation for a special case of Theorem 4 in \citet{WS95} which is stated here for the convenience of the reader: In their paper the authors study approximate sieve estimation, i.e., they allow that the estimator only approximately maximizes the likelihood. In their notation this means that we assume in the present paper that $\eta_n=0$, this is already included in the following version of Theorem 4 of \citet{WS95}.

\begin{theorem}
\label{thm:WS95}
Let $Y_1,...,Y_n$ be iid observations which have a density $p_0$. Let moreover $\mathcal{F}_n$ be an arbitrary sequence of sieve spaces for density estimation and let $\hat{p}_n$ denote the sieve-MLE. Suppose that there are constants $c_1,c_2>0$ and a sequence $\epsilon_n>0$ such that
$$\int_{\frac{\epsilon_n^2}{2^8}}^{\sqrt{2}\epsilon_n}\sqrt{\log\mathcal{N}_{[]}\left(\frac{u}{c_1},\mathcal{F}_n,\rho_H\right)}du\leq c_2\sqrt{n}\epsilon_n^2.$$
Let $\delta_n(\alpha):=\inf_{q\in\mathcal{F}_n}\rho_{\alpha}(p_0,q)$ for $\alpha\in(0,1]$. Suppose that there is $\alpha\in(0,1]$ such that $\delta_n(\alpha)<1/\alpha$. Then, there is a constant $c>0$ (which depends on the model) such that for
$$\epsilon_n^*(\alpha):=\max\left(\epsilon_n,\sqrt{\frac{4\delta_n(\alpha)}{c}}\right)$$
we have for another constant $C>0$
$$\IP\left(\rho_H(\hat{p}_n,p_0)\geq\epsilon_n^*(\alpha)\right)\leq5\exp\left(-Cn\epsilon_n^*(\alpha)^2\right)+\exp\left(-\frac{1}{4}n\alpha c\epsilon_n^*(\alpha)^2\right).$$
\end{theorem}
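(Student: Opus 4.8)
This statement is a specialization of Theorem~4 of \citet{WS95} (the approximation slack $\eta_n$ set to zero), so within the paper the honest route is simply to cite it; what follows sketches how one would establish it from first principles. The plan is to start from the \emph{basic inequality} for the sieve-MLE: since $\hat p_n$ maximizes $\prod_{i=1}^n p(Y_i)$ over $p\in\mathcal{F}_n$, it does at least as well as the best $\rho_\alpha$-approximant $q_n\in\mathcal{F}_n$ (one realizing, up to vanishing slack, the infimum $\delta_n(\alpha)$). Hence $\frac1n\sum_{i=1}^n\log(\hat p_n(Y_i)/q_n(Y_i))\geq0$, and the whole analysis reduces to showing that no density in $\mathcal{F}_n$ which is Hellinger-far from $p_0$ can make the likelihood ratio against $p_0$ this large, except on an event of exponentially small probability.

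First I would set up a \emph{peeling} argument, decomposing the bad event $\{\rho_H(\hat p_n,p_0)\geq\epsilon_n^*(\alpha)\}$ into the union over shells $\mathcal{S}_j=\{p\in\mathcal{F}_n:\,2^j\epsilon_n^*\leq\rho_H(p,p_0)<2^{j+1}\epsilon_n^*\}$, and on each shell bound the probability that $\sup_{p\in\mathcal{S}_j}\frac1n\sum_i\log(p(Y_i)/p_0(Y_i))$ exceeds a threshold of order $-(2^j\epsilon_n^*)^2$. The single-density input is the elementary Hellinger moment identity $\IE_{p_0}[(p/p_0)^{1/2}]=1-\tfrac12\rho_H(p,p_0)^2\leq\exp(-\tfrac12\rho_H(p,p_0)^2)$, which controls the moment generating function of the log-likelihood ratio at exponent $1/2$ and already yields an exponential bound $\exp(-\tfrac12 n\rho_H^2)$ for a fixed $p$.

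The main work --- and the main obstacle --- is upgrading this pointwise estimate to a bound on the \emph{supremum} over a shell $\mathcal{S}_j$, since $\mathcal{F}_n$ grows with $n$ and need not be Donsker. Here one covers $\mathcal{S}_j$ by Hellinger brackets and runs a chaining (Dudley-type) argument; the bracketing-entropy integral $\int_{\epsilon^2/2^8}^{\sqrt2\epsilon}\sqrt{\log\mathcal{N}_{[]}(u/c_1,\mathcal{F}_n,\rho_H)}\,du$ is exactly the quantity that appears when summing the chaining increments, and the hypothesis that it is $\leq c_2\sqrt n\epsilon_n^2$ guarantees that the fluctuations are dominated by $n(\epsilon_n^*)^2$. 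Combining the chaining bound with a Bernstein/Markov-type exponential inequality for the centred log-ratios gives, shell by shell, a bound of the form $\exp(-Cn(2^j\epsilon_n^*)^2)$; summing the resulting geometric series over $j\geq0$ produces the factor $5\exp(-Cn\epsilon_n^*(\alpha)^2)$.

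Finally I would account for the \emph{bias} coming from $p_0\notin\mathcal{F}_n$. The reason the result is phrased through $\rho_\alpha$ rather than Hellinger is that the approximation error must be measured in a divergence compatible with the likelihood: using $\IE_{p_0}[(p_0/q_n)^{\alpha}]=1+\alpha\rho_\alpha(p_0,q_n)$ and a Markov bound at exponent $\alpha$, the deficit $\frac1n\sum_i\log(q_n(Y_i)/p_0(Y_i))$ is controlled by $\delta_n(\alpha)$ with exponentially small error $\exp(-\tfrac14 n\alpha c\,\epsilon_n^*(\alpha)^2)$ --- the second term in the stated bound --- and the condition $\delta_n(\alpha)<1/\alpha$ keeps this Markov step in the regime where the bound is useful. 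The choice $\epsilon_n^*(\alpha)=\max(\epsilon_n,\sqrt{4\delta_n(\alpha)/c})$ simply balances the stochastic rate $\epsilon_n$ against the bias rate $\sqrt{\delta_n(\alpha)}$, and assembling the two exponential estimates yields the claim. The delicate point throughout is the constant bookkeeping in the chaining step, where the specific integration limits $\epsilon^2/2^8$ and $\sqrt2\epsilon$ and the exponent $1/2$ must be tracked so that a single $\epsilon_n^*$ simultaneously controls approximation and fluctuation.
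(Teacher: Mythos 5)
Your proposal takes essentially the same approach as the paper: this theorem is an imported result, and the paper's entire ``proof'' consists of citing Theorem 4 of Wong and Shen (1995), which you correctly identify as the honest route. Your accompanying sketch (basic inequality, peeling over Hellinger shells, the identity $\IE_{p_0}[(p/p_0)^{1/2}]=1-\tfrac12\rho_H(p,p_0)^2$, chaining via the bracketing entropy integral, and the Markov bound at exponent $\alpha$ for the $\rho_\alpha$-approximation error) is a faithful outline of how that cited result is actually established, so nothing further is needed.
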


The next result is a statement about approximating functions by using Laguerre polynomials. The following is a combination of Theorem 1 and Remark 2 in Chapter II.8 of \citet{NU88}. We formulate this theorem here in our setting. The original statement is more general.

\begin{theorem}
\label{thm:lag_approx}
Let $f:[0,\infty)\to\IR$ be continuous and have a piecewise continuous derivative $f'$. Consider the series
$$f_N(x):=\sum_{k=0}^Nc_kL_k(x),\quad c_k:=\int_0^{\infty}f(x)L_k(x)e^{-x}dx.$$
If the integrals
$$\int_0^{\infty}f(x)^2e^{-x}dx\,\textrm{ and }\,\int_0^{\infty}f'(x)^2xe^{-x}dx$$
converge, we have $f_N\to f$ for $N\to\infty$ uniformly over compact sets $[x_1,x_2]\subseteq(0,\infty)$ and $\lim_{N\to\infty}\int_0^{\infty}\left[f(x)-f_N(x)\right]^2e^{-x}dx\to0$.
\end{theorem}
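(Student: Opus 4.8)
The plan is to treat the two assertions separately: the mean-square statement will come from completeness of the Laguerre system, while the locally uniform statement will come from a coefficient-decay estimate combined with classical pointwise bounds on the Laguerre polynomials. The second integrability hypothesis is what separates the two.

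First I would dispose of the $L^2$-convergence. The hypothesis $\int_0^\infty f(x)^2 e^{-x}dx<\infty$ says exactly that $f\in L^2([0,\infty),e^{-x}dx)$, and the partial sums $f_N$ are by definition the orthogonal projections of $f$ onto $\mathrm{span}\{L_0,\dots,L_N\}$, using the orthonormality $\int_0^\infty e^{-x}L_k(x)L_l(x)dx=\Ind(k=l)$ already recorded in the excerpt. Since the Laguerre polynomials form a \emph{complete} orthonormal system in this weighted space (the weight $e^{-x}$ has exponentially light tails, so the associated moment problem is determinate and polynomials are dense), Parseval's identity gives $\int_0^\infty(f-f_N)^2e^{-x}dx=\sum_{k>N}c_k^2\to0$, which is the second claim.

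The core of the argument is the locally uniform convergence. The idea is to prove $\sum_k k\,c_k^2<\infty$ and then exploit the decay $|L_k(x)|\le C\,k^{-1/4}$, valid uniformly on compact subsets of $(0,\infty)$. For the weighted summability I would use the Sturm--Liouville form of the Laguerre equation, $\tfrac{d}{dx}\bigl(x e^{-x}L_k'(x)\bigr)=-k\,e^{-x}L_k(x)$, so that after one integration by parts
\[
c_k=\int_0^\infty f L_k e^{-x}dx=-\frac1k\int_0^\infty f\,\frac{d}{dx}\bigl(x e^{-x}L_k'\bigr)dx=\frac1k\int_0^\infty f'(x)L_k'(x)\,x e^{-x}dx.
\]
Writing $L_k'=-L_{k-1}^{(1)}$ in terms of the associated Laguerre polynomials, which are orthogonal for the weight $x e^{-x}$ with $\int_0^\infty (L_{k-1}^{(1)})^2 x e^{-x}dx=k$, one sees that $c_k=-\tfrac1k\langle f',L_{k-1}^{(1)}\rangle$ in that inner product; Bessel's inequality applied to $f'\in L^2(x e^{-x}dx)$ then yields $\sum_k k\,c_k^2=\sum_k\langle f',L_{k-1}^{(1)}\rangle^2/k\le\int_0^\infty f'(x)^2 x e^{-x}dx<\infty$. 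With this in hand, a Cauchy--Schwarz split of the tail,
\[
\sum_{k\ge N}|c_k|\,|L_k(x)|\le C\sum_{k\ge N}\bigl(k^{1/2}|c_k|\bigr)k^{-3/4}\le C\Bigl(\sum_{k\ge N}k\,c_k^2\Bigr)^{1/2}\Bigl(\sum_{k\ge N}k^{-3/2}\Bigr)^{1/2},
\]
shows the series converges uniformly on each $[x_1,x_2]\subset(0,\infty)$; since the uniform limit is continuous and already equals $f$ in $L^2(e^{-x})$, it coincides with the continuous $f$ pointwise.

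The main obstacle is twofold and lies in the classical hard analysis of Laguerre polynomials. The cleaner piece is justifying the integration by parts: with only a piecewise continuous $f'$ I would integrate over each smooth piece and use continuity of $f$ and of $x e^{-x}L_k'$ to cancel the interior contributions, leaving the endpoint terms at $0$ and $\infty$, which vanish thanks to the factor $x e^{-x}$ together with the integrability hypotheses. The genuinely deep ingredient is the uniform bound $|L_k(x)|\le C\,k^{-1/4}$ on compact subsets of $(0,\infty)$, which rests on Fejér/Hilb-type asymptotics of the form $e^{-x/2}L_k(x)=\pi^{-1/2}(kx)^{-1/4}\cos(2\sqrt{kx}-\pi/4)+O(k^{-3/4})$; establishing this from scratch is the real work, and it is exactly where the restriction to compacts of the \emph{open} half-line is forced, since the bound degenerates as $x\downarrow0$.
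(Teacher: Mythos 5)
Your argument is sound, but note that the paper does not actually prove this statement: Theorem \ref{thm:lag_approx} is imported verbatim (as a special case) from Theorem 1 and Remark 2 in Chapter II.8 of Nikiforov and Uvarov, and is used as a black box in the proof of Lemma \ref{lem:approx}. So there is no in-paper proof to compare against; what you have supplied is essentially the classical argument that the cited source itself uses. Your two halves are both correct: the mean-square claim is exactly completeness of $\{L_k\}$ in $L^2([0,\infty),e^{-x}dx)$ plus Parseval, and the locally uniform claim follows from the coefficient bound $\sum_k k\,c_k^2\le\int_0^\infty f'(x)^2xe^{-x}dx$ (your Sturm--Liouville integration by parts, the identity $L_k'=-L_{k-1}^{(1)}$, and the normalization $\int_0^\infty (L_{k-1}^{(1)})^2xe^{-x}dx=k$ all check out) combined with the Fej\'er-type bound $|L_k(x)|\le Ck^{-1/4}$ on compacts of $(0,\infty)$ and a Cauchy--Schwarz tail estimate. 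The only step you wave at is the vanishing of the boundary term at infinity in the integration by parts: $\int_0^\infty f^2e^{-x}dx<\infty$ alone does not give pointwise decay of $f(x)xe^{-x}L_k'(x)$, but writing $f(x)=f(1)+\int_1^x f'(t)\,dt$ and applying Cauchy--Schwarz against the weight $te^{-t}$ yields $|f(x)|\le C(1+x^{-1/2}e^{x/2})$, which does kill the boundary term; this is worth two explicit lines. The remaining inputs (completeness of the Laguerre system, the Hilb/Fej\'er asymptotics) are genuinely classical and it is reasonable to cite them, exactly as the paper cites the whole theorem.
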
 

\subsection{Further Empirical Results}
\subsubsection{Simulation Study}
\label{subsubsec:appendix_simstud}

In this Section we present additional simulation results complementing the results from Section \ref{subsec:simulation}. Figures \ref{fig:n40_05quant}-\ref{fig:n40_09quant} show histograms for estimation of the 50\%-, 70\%- and 90\%-quantiles. The results are very similar to the results for the 30\%-quantile which are discussed in Section \ref{subsec:simulation}.

\begin{figure}
\centering
\includegraphics[width=\textwidth]{./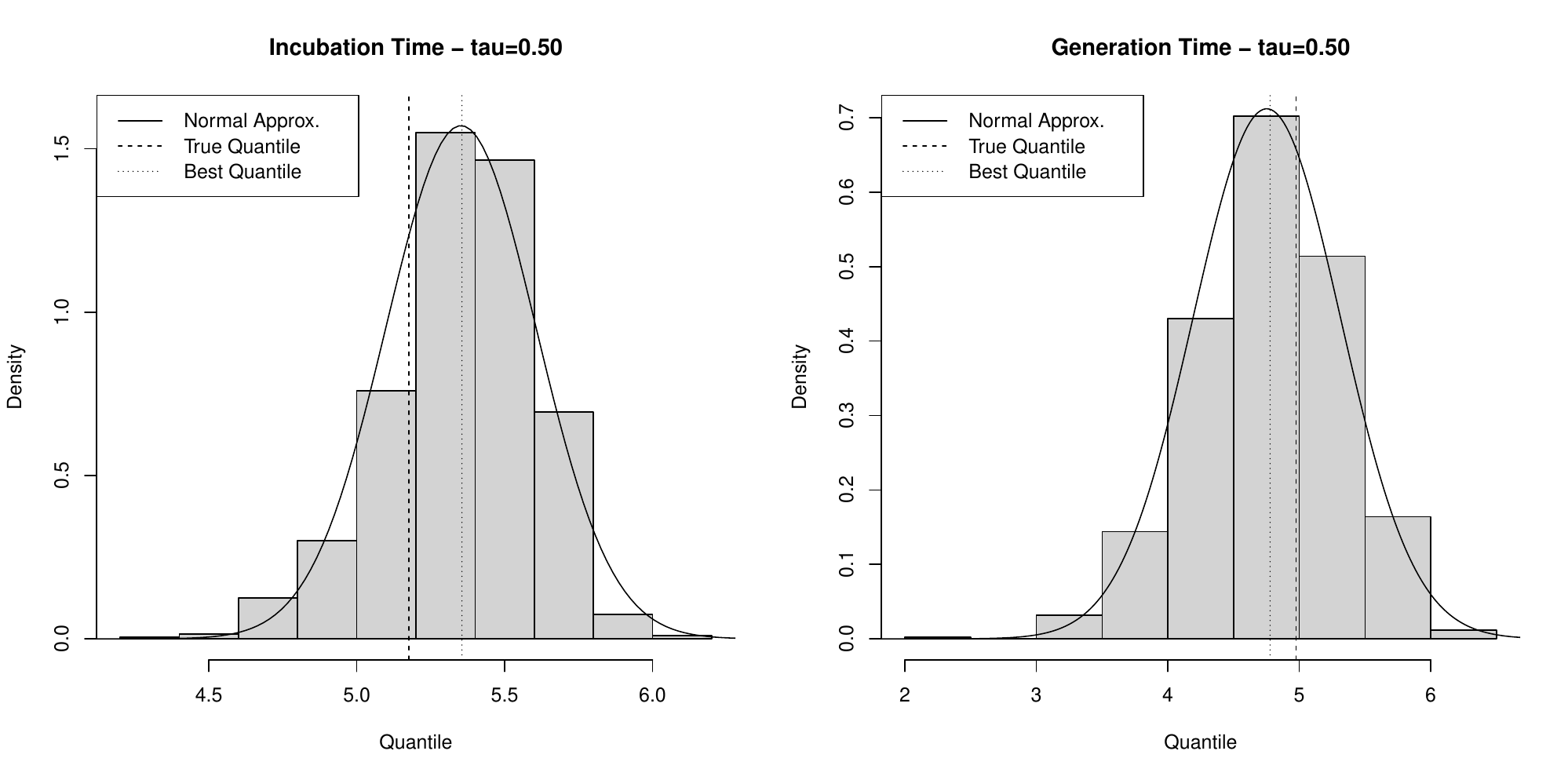}
\caption{Histograms of estimated 50\%-quantile for the incubation time (left) and generation time (right). The dashed lines indicate the respective true values, the dotted lines the quantiles of the best approximating densities and the solid line is the density of a normal distribution with the corresponding sample mean and standard deviation.}
\label{fig:n40_05quant}
\end{figure}

\begin{figure}
\centering
\includegraphics[width=\textwidth]{./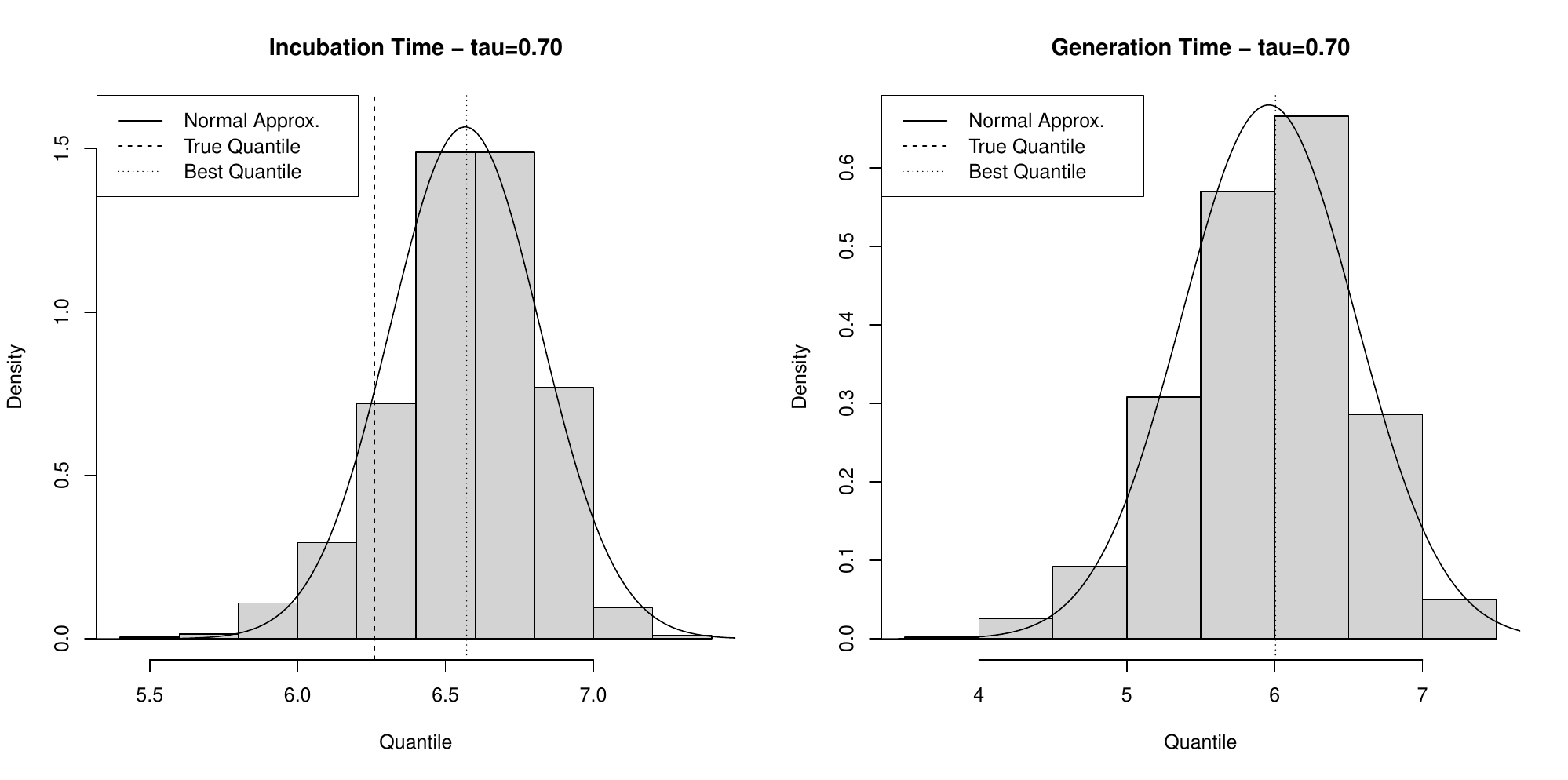}
\caption{Histograms of estimated 70\%-quantile for the incubation time (left) and generation time (right). The dashed lines indicate the respective true values, the dotted lines the quantiles of the best approximating densities and the solid line is the density of a normal distribution with the corresponding sample mean and standard deviation.}
\label{fig:n40_07quant}
\end{figure}

\begin{figure}
\centering
\includegraphics[width=\textwidth]{./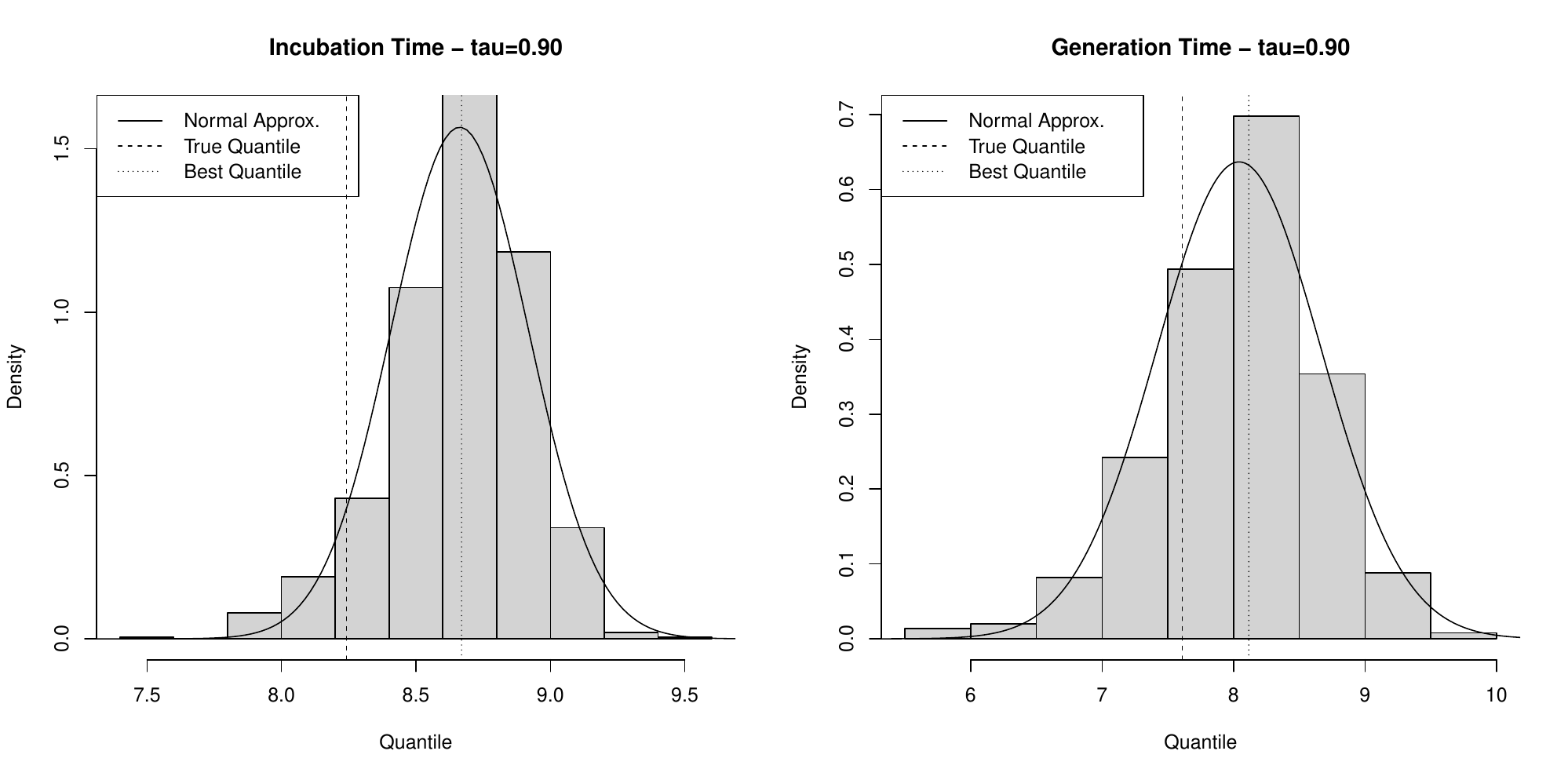}
\caption{Histograms of estimated 90\%-quantile for the incubation time (left) and generation time (right). The dashed lines indicate the respective true values, the dotted lines the quantiles of the best approximating densities and the solid line is the density of a normal distribution with the corresponding sample mean and standard deviation.}
\label{fig:n40_09quant}
\end{figure}

Next, we use the same set-up as in Section \ref{subsec:simulation} in the main text, however, here we choose as a model $m_1=3$ and $m_2=2$, i.e., the model has now more flexibility for the incubation time. The estimation results are visualized in Figure \ref{fig:n40_densities_23} and the difference in the squared Hellinger distance is shown in Figure \ref{fig:n40_hellinger_23}. In Figure \ref{fig:n40_densities_23} it appears that for the incubation time the mode moves closer to its true location, it is sometimes even overestimated (compared to the case $m_2=2$ which was shown in Section \ref{subsec:simulation}). Moreover, the estimated incubation time densities seem to fluctuate more, i.e., they show a higher variance due to the higher flexibility. The estimates for the generation time appear to be almost identical. In terms of the Hellinger distances, cf. histograms in Figure \ref{fig:n40_hellinger_23}, the results appear to be very similar to the results obtained in Section \ref{subsec:simulation}. In general we see that estimation with $n=40$ observations yields reasonable results for both model complexities. However, we expect that the estimation can be improved if more observations are available enabling the method to choose better approximations.

\begin{figure}
\includegraphics[width=\textwidth]{./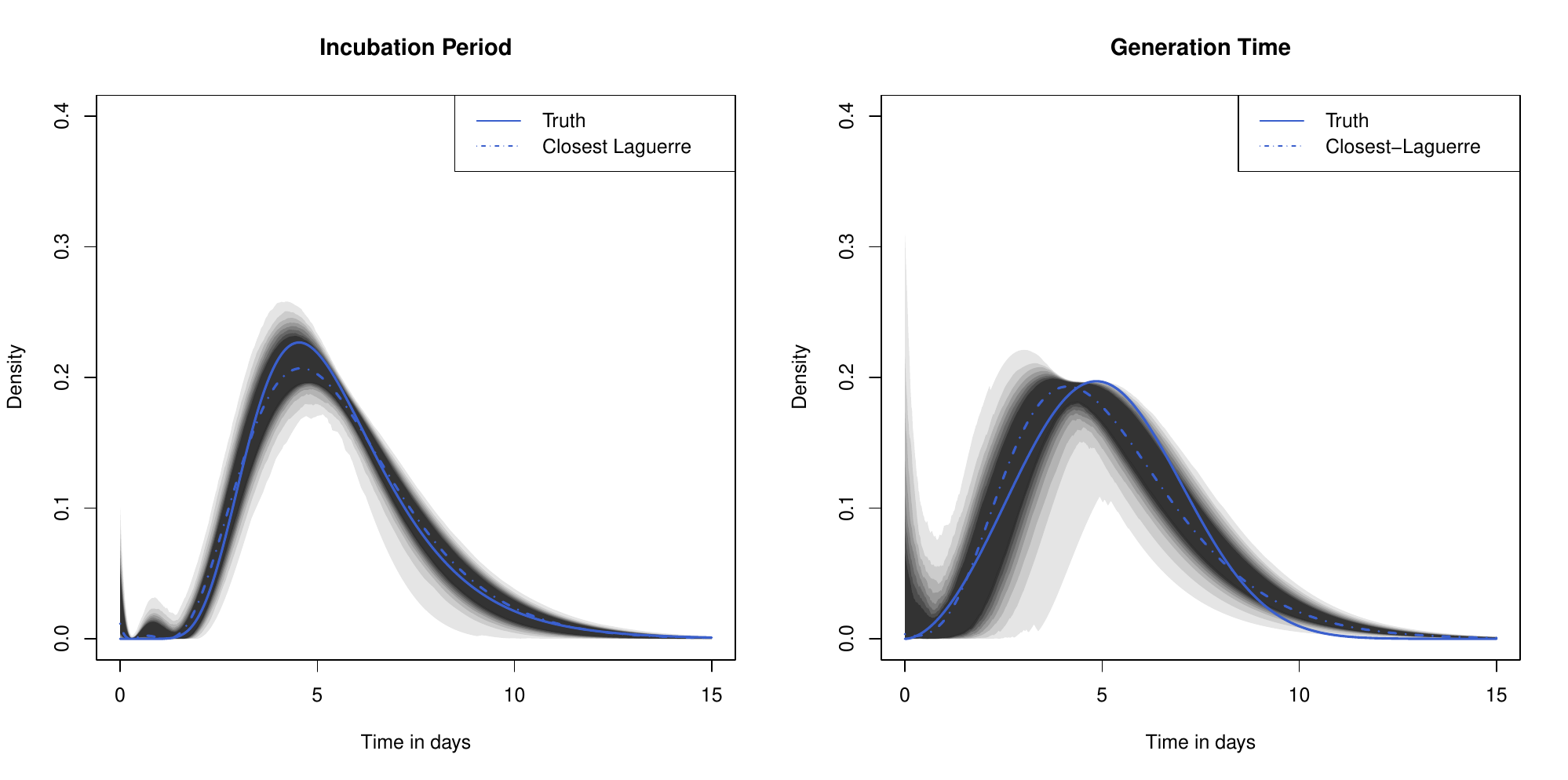}
\caption{Estimation results for simulated data with $n=40$ observations. True densities are shown as solid blue lines and the dashed lines show the Laguerre densities which come closest to the true densities. The shaded areas show the point-wise simulated confidence areas from $99\%$ (lightest gray) over $95\%$ in $5\%$-steps to $60\%$ in black. Model choices: $m_1=2$ and $m_2=3$}
\label{fig:n40_densities_23}
\end{figure}

\begin{figure}
\includegraphics[width=\textwidth]{./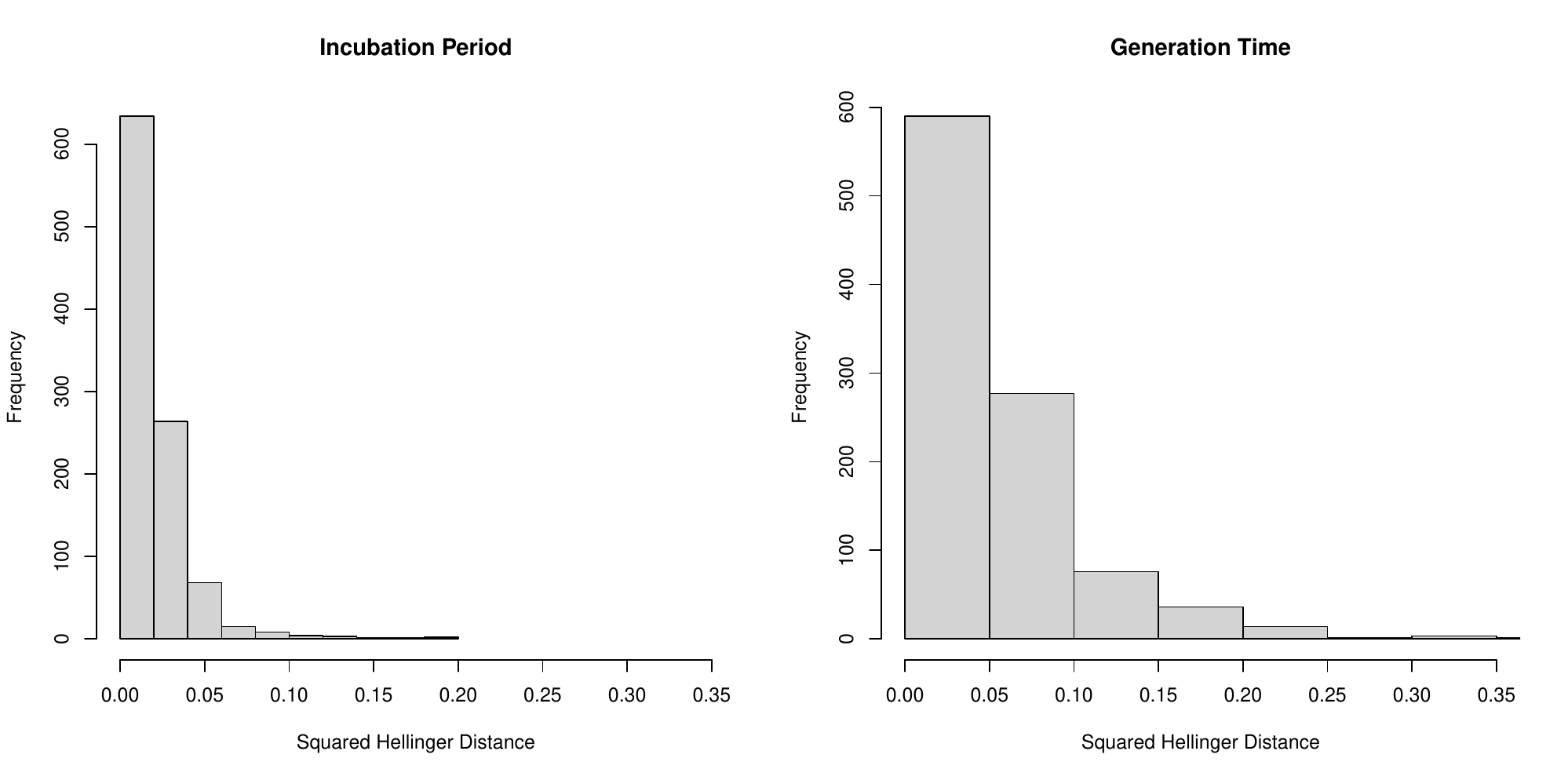}
\caption{Histograms of squared Hellinger distances of estimates to true densities. Model Choices: $m_1=3$ and $m_2=2$.}
\label{fig:n40_hellinger_23}
\end{figure}

\subsubsection{Real Data Application}
\label{subsubsec:appendix_datapp}

In this section we show an analysis similar to that from Section \ref{subsec:real_data} in the main text, but here we choose $m_1=5$ and $m_2=3$. We begin with a simulation: We generate $N=1,000$ datasets assuming that the models specified in $H_0^{(I)}$ and $H_0^{(G)}$ are correct. Figure \ref{fig:hellinger_53_data} shows the squared Hellinger distances of the estimates to the closest Laguerre type densities with $m_1=5$ and $m_2=3$. The vertical lines show the distances which are obtained from the real data set. In case of the incubation period only $1\%$ of the simulated distances are larger, for the generation time this percentage is higher, $44.9\%$. Finally, both distances are simultaneously larger for both densities in $0.3\%$ of the simulated cases. Thus, we conclude that also in this larger model class there is no evidence in the dataset which would contradict the parametric model for the generation time fitted by \citet{FWK20}. However the parametric model for the incubation time used by \citet{FWK20} and \citet{LGB20} might be questionable depending on the desired level of the test.

\begin{figure}
\includegraphics[width=\textwidth]{./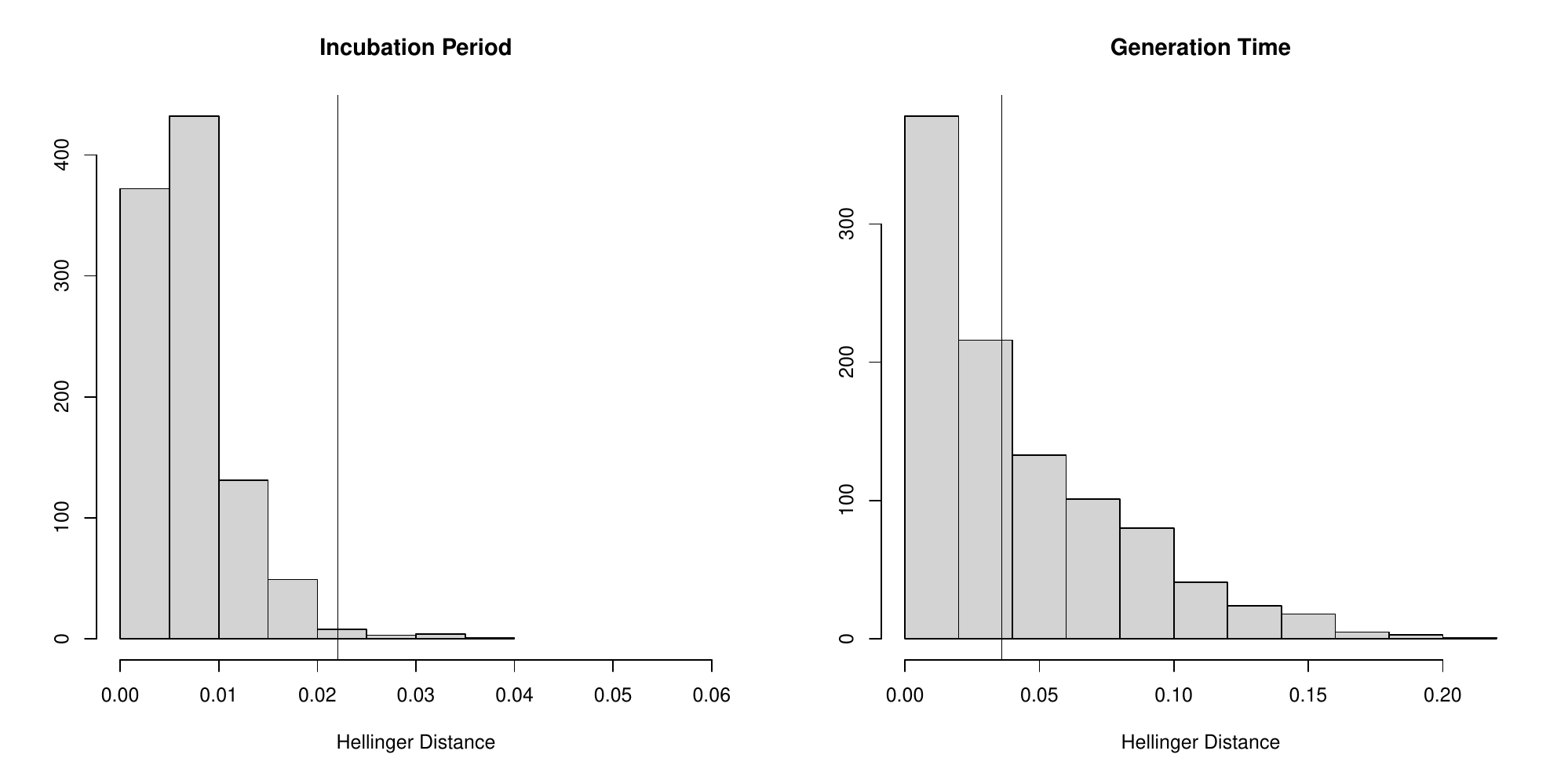}
\caption{Histogram of squared Hellinger distances of $N=1,000$ simulations from the model under $H_0^{(I)}\cap H_0^{(G)}$. Distances are computed between the estimated model and the closest Laguerre type model (for $m_1=5$ and $m_2=3$). The vertical lines indicate the observed values of the test statistic.}
\label{fig:hellinger_53_data}
\end{figure}

\subsection{Proofs of Section \ref{subsec:discussion}}
\label{subsec:cor_proofs}

\begin{proof}[Proof of Corollary \ref{cor:consistency}]
For the proof we make the following definitions: Let $\Phi_I$, $\Phi_G$ and $\Phi_{S_1,S_2,W,C}$ be the characteristic functions of the corresponding random variables, i.e., for real numbers $x,x_1,...,x_4\in\IR$ (note that in the definition of $\phi_I$ it doesn't matter whether we take $I_1$ or $I_2$)
\begin{align*}
\Phi_I(x)&:=\IE\left(e^{ixI_1}\right), \quad \Phi_G(x):=\IE\left(e^{ixG}\right), \\
\Phi_{S_1,S_2,W,C}(x_1,x_2,x_3,x_4)&:=\IE\left(e^{i(x_1S1+x_2S_2+x_3W+x_4C)}\right).
\end{align*}
As a first step, we compute $\Phi_{S_1,S_2,W,C}$. We have by definition of $(S_1,S_2,W,C)$ and independence
\begin{align*}
&\Phi_{S_1,S_2,W,C}(x_1,x_2,x_3,x_4)=\IE\left(e^{i(x_1+x_2)T_1}e^{ix_1I_1}e^{ix_2I_2}e^{ix_2G}e^{ix_3W}e^{ix_4C}\right) \\
=&\IE\left(e^{ix_3W+ix_4C}\IE\left(e^{i(x_1+x_2)T_1}\big|W,C\right)\right)\Phi_I(x_1)\Phi_I(x_2)\Phi_G(x_2) \\
=&\Psi(x_1+x_2,x_3,x_4)\Phi_I(x_1)\Phi_I(x_2)\Phi_G(x_2),
\end{align*}
where
$$\Psi(x,z_1,z_2):=\IE\left(e^{iz_1W+iz_2C}\IE\left(e^{ixT_1}\big|W,C\right)\right).$$
Let now $z_1(x),z_2(x)$ be as in Assumption (C). We obtain the following two equations
\begin{align*}
\Phi_{S_1,S_2,W,C}\left(x,0,z_1(x),z_2(x)\right)&=\Psi(x,z_1(x),z_2(x))\Phi_I(x), \\
\Phi_{S_1,S_2,W,C}\left(0,x,z_1(x),z_2(x)\right)&=\Psi(x,z_1(x),z_2(x))\Phi_I(x)\Phi_G(x).
\end{align*}
Note that Assumption (C) implies that $\Phi_{S_1,S_2,W,C}(x,0,z_1(x),z_2(x))\neq0$ almost everywhere. All of this, in turn, implies the relations (almost everywhere by Assumption (C))
\begin{align}
\Phi_I(x)=&\Psi(x,z_1(x),z_2(x))^{-1}\Phi_{S_1,S_2,W,C}\left(x,0,z_1(x),z_2(x)\right), \label{eq:identI}\\
\Phi_G(x)=&\Phi_{S_1,S_2,W,C}\left(x,0,z_1(x),z_2(x)\right)^{-1}\Phi_{S_1,S_2,W,C}\left(0,x,z_1(x),z_2(x)\right). \label{eq:identG}
\end{align}
Let $\hat{C}_n,\hat{W}_n,\hat{T}_{1,n},\hat{I}_{1,n},\hat{I}_{2,n}$ and $\hat{G}_n$ be random variables which have the relation as specified in \eqref{eq:mod} but where $\phi_I$ is replaced by $\hat{\phi}_{I,n}$ and $\phi_G$ is replaced by $\hat{\phi}_{G,n}$. Let moreover, $\hat{S}_{1,n}$ and $\hat{S}_{2,n}$ be defined analogously to their corresponding quantities $S_1$ and $S_2$ with corresponding characteristic functions $\Phi_{\hat{S}_{1,n},\hat{S}_{2,n},\hat{W}_n,\hat{C}_n}$, $\Phi_{\hat{I}_n}$ and $\Phi_{\hat{G}_n}$.

Since convergence in probability implies the existence of an almost surely convergent subsequence (cf. Lemma 1.9.2 in \citet{VW96}), we find by Theorem \ref{thm:consistency} that there is an event $\Omega$ of probability 1 and a subsequence which we indicate again by $f_{\hat{\phi}_{I,n},\hat{\phi}_{G,n}}$ such that
$$\rho_H\left(f_{\phi_I,\phi_G}p_C,f_{\hat{\phi}_{I,n},\hat{\phi}_{G,n}}p_C\right)\to0$$
for all results in $\Omega$. Theorem 1 in Chapter 3.9 in \citet{S16} implies that then
$$\left|\sum_{c=1}^{K_C}\int r(s_1,s_2,w,c)\left(f_{\phi_I,\phi_G}(s_1,s_2,w|c)-f_{\hat{\phi}_{I,n},\hat{\phi}_{G,n}}(s_1,s_2,w|c)\right)p_C(c)d(s_1,s_2,w)\right|\to0,$$
uniformly over all functions $r:\IR^3\times\{1,...,K_C\}\to\IC$ with $\|r\|_{\infty}\leq1$. Since the functions $r(s_1,s_2,w,c)=\exp(i(x_1s_1+x_2s_2+x_3w+x_4c))$ are bounded for all $x_1,x_2,x_3,x_4\in\IR$, we conclude that $\Phi_{\hat{S}_{1,n},\hat{S}_{2,n},\hat{W}_n,\hat{C}_n}$ converges uniformly to $\Phi_{S_1,S_2,W,C}$. Hence, for almost any fixed $x$ we eventually have $\Phi_{\hat{S}_{1,n},\hat{S}_{2,n},\hat{W}_n,\hat{C}_n}(x,0,z_1(x),z_2(x))\neq0$ and thus eventually analogue versions of the relations \eqref{eq:identI} and \eqref{eq:identG} hold for $\Phi_{\hat{I}_n}$ and $\Phi_{\hat{G}_n}$. As a consequence, we obtain the following point-wise almost everywhere convergences
\begin{align*}
\Phi_{\hat{I}_n}(x)=&\Psi(x,z_1(x),z_2(x))^{-1}\Phi_{\hat{S}_{1,n},\hat{S}_{2,n},\hat{W}_n,\hat{C}_n}\left(x,0,z_1(x),z_2(x)\right) \\
&\quad\quad\to\Psi(x,z_1(x),z_2(x))^{-1}\Phi_{S_1,S_2,W,C}\left(x,0,z_1(x),z_2(x)\right)=\Phi_I(x), \\
\Phi_{\hat{G}_n}(x)=&\Phi_{\hat{S}_{1,n},\hat{S}_{2,n},\hat{W}_n,\hat{C}_n}\left(x,0,z_1(x),z_2(x)\right)^{-1}\Phi_{\hat{S}_{1,n},\hat{S}_{2,n},\hat{W}_n,\hat{C}_n}\left(0,x,z_1(x),z_2(x)\right) \\
&\quad\quad\to\Phi_{S_1,S_2,W,C}\left(x,0,z_1(x),z_2(x)\right)^{-1}\Phi_{S_1,S_2,W,C}\left(0,x,z_1(x),z_2(x)\right)=\Phi_G(x).
\end{align*}
Now by Levy's Theorem (cf. Theorem 1.7.6 \citet{B68}, an inspection of the proof reveals that in the univariate case convergence almost everywhere is a sufficient condition), we conclude that $\hat{F}_{I,n}\to F_I$ and $\hat{F}_{G,n}\to F_G$ point-wise for all realisations in $\Omega$. Since the distribution functions are continuous, these convergences are also uniform. Recall that the above argument holds for a subsequence and all realisations in $\Omega$. But we can repeat the same argument starting from a subsequence and showing in that way that every subsequence contains a sub-subsequence for which the corresponding distribution functions converge almost surely. This implies convergence in probability by Lemma 1.9.2 in \citet{VW96} and the proof is complete.
\end{proof}

\begin{proof}[Proof of Corollary \ref{cor:R}]
We make a similar subsequence of a sub-sequence argument as in Corollary \ref{cor:consistency}: Consider sub-sequences of $\hat{r}_n$ and $\hat{F}_{G,n}$ (which we again do not indicate in the notation) such that $\hat{r}_n\to r$ almost surely and $\left\|\hat{F}_{G,n}-F_G\right\|_{\infty}\to0$ almost surely (the former is possible by assumption and the latter by Corollary \ref{cor:consistency}). The following considerations are made for each realisation in a set of probability one on which these two convergences hold. Choose $n$ so large such that $\hat{r}_n\geq r/2$. Choose moreover, $c>0$ such that $t\exp(-tr/2)\leq c$ for all $t\geq0$. We have
\begin{align}
&\int_0^{\infty}e^{-\hat{r}_nt}\hat{\phi}_{G,n}dt-\int_0^{\infty}e^{-rt}\phi_G(t)dt \nonumber \\
=&\int_0^{\infty}\left(e^{-\hat{r}_nt}-e^{-rt}\right)\hat{\phi}_{G,n}dt+\int_0^{\infty}e^{-rt}\left(\hat{\phi}_{G,n}-\phi_G(t)\right)dt. \label{eq:split1}
\end{align}
By a Taylor expansion we obtain for the first part of the right hand side above and for some $r^*\in[r,\hat{r}_n]$ (note that by assumption $r^*\geq r/2$ and hence $t\cdot \exp(-r^*t)\leq c$)
$$\left|\int_0^{\infty}\left(e^{-\hat{r}_nt}-e^{-rt}\right)\hat{\phi}_{G,n}(t)dt\right|\leq\left|\hat{r}_n-r\right|\int_0^{\infty}te^{-r^*t}\hat{\phi}_{G,n}(t)dt\leq c\left|\hat{r}_n-r\right|\to0.$$
For the second term on the right hand side of \eqref{eq:split1} we obtain via integration by parts
\begin{align*}
&\int_0^{\infty}e^{-rt}\left(\hat{\phi}_{G,n}-\phi_G(t)\right)dt \\
=&e^{-rt}\left(\hat{F}_{G,n}(t)-F_G(t)\right)\Big|_{t=0}^{\infty}+\int_0^{\infty}re^{-rt}\left(\hat{F}_{G,n}(t)-F_G(t)\right)dt
\end{align*}
which converges to zero since $\hat{F}_{G,n}\to F$ uniformly. Now, a subsequence of a subsequence argument completes the proof.
\end{proof}

\subsection{Proofs of Section \ref{subsec:proof}}
\label{subsec:lemma_proofs}

\begin{proof}[Proof of Lemma \ref{lem:rho}]
We firstly apply Hoelder's Inequality with $p=1/(1-\alpha)$ and $q=1/\alpha$ (use that $\alpha\in(0,1)$) to get
\begin{align*}
&\rho_{\alpha}(f_1p_C,f_2p_C) \\
=&\frac{1}{\alpha}\sum_{c=1}^{K_C}\int_{[0,\infty)^3}f_1(x_1,x_2,\omega|c)p_C(c)\left(\left(\frac{f_1(x_1,x_2,\omega|c)p_C(c)}{f_2(x_1,x_2,\omega|c)p_C(c)}\right)^{\alpha}-1\right)d(x_1,x_2,\omega) \\
=&\frac{1}{\alpha}\sum_{c=1}^{K_C}\int_{[0,\infty)^3}\left(\frac{f_1(x_1,x_2,\omega|c)}{f_2(x_1,x_2,\omega|c)}\right)^{\alpha}f_1(x_1,x_2,\omega|c)^{1-\alpha} \\
&\quad\quad\quad\quad\times\left(f_1(x_1,x_2,\omega|c)^{\alpha}-f_2(x_1,x_2,\omega|c)^{\alpha}\right)p_C(c)d(x_1,x_2,\omega) \\
\leq&\frac{1}{\alpha}\left(\sum_{c=1}^{K_C}\int_{[0,\infty)^3}\left(\frac{f_1(x_1,x_2,\omega|c)}{f_2(x_1,x_2,\omega|c)}\right)^{\frac{\alpha}{1-\alpha}}f_1(x_1,x_2,\omega|c)p_C(c)d(x_1,x_2,\omega)\right)^{1-\alpha} \\
&\quad\times\left(\sum_{c=1}^{K_C}\int_{[0,\infty)^3}\left|f_1(x_1,x_2,\omega|c)^{\alpha}-f_2(x_1,x_2,\omega|c)^{\alpha}\right|^{\frac{1}{\alpha}}p_C(c)d(x_1,x_2,\omega)\right)^{\alpha}.
\end{align*}
By using the definition of $C_{\alpha}(f_1,f_2)$, we see that \eqref{eq:hellinger_bound} follows from the above if we can prove that
\begin{align}
&\sum_{c=1}^{K_C}\int_{[0,\infty)^3}\left|f_1(x_1,x_2,\omega|c)^{\alpha}-f_2(x_1,x_2,\omega|c)^{\alpha}\right|^{\frac{1}{\alpha}}p_C(c)d(x_1,x_2,\omega) \nonumber \\
\leq&2\rho_H(\phi_{G,1},\phi_{G,2})^2+8\rho_H(\phi_{I,1},\phi_{I,2})^2. \label{eq:istep}
\end{align}
We begin by applying the reverse triangle inequality for the $L^{1/\alpha}$ norm repeatedly. Define to this end
\begin{align*}
B_1(t;x_1,x_2,c,\omega,y):=&\phi_{G,1}(y)h(\omega-t|c)\phi_{I,1}(x_1-t)\phi_{I,1}(x_2-t-y), \\
B_2(t;x_1,x_2,c,\omega,y):=&\phi_{G,2}(y)h(\omega-t|c)\phi_{I,2}(x_1-t)\phi_{I,2}(x_2-t-y), \\
A_1(y;x_1,x_2,c,\omega):=&\int_0^{\min(x_1,\omega)}B_1(t;x_1,x_2,c,y)dt, \\
A_2(y;x_1,x_2,c,\omega):=&\int_0^{\min(x_1,\omega)}B_2(t;x_1,x_2,c,y)dt.
\end{align*}
We have now by the reverse triangle inequality for $L^{1/\alpha}$ that for any $x_1,x_2,\omega\geq0$
\begin{align*}
&\Bigg|\left(\int_0^{x_2}A_1(y;x_1,x_2,c,\omega)dy\right)^{\alpha}-\left(\int_0^{x_2}A_2(y;x_1,x_2,c,\omega)dy\right)^{\alpha}\Bigg|^{\frac{1}{\alpha}} \\
=&\left|\left\|A_1(\cdot;x_1,x_2,c,\omega)^{\alpha}\right\|_{L^{1/\alpha}([0,x_2])}-\left\|A_2(\cdot;x_1,x_2,c,\omega)^{\alpha}\right\|_{L^{1/\alpha}([0,x_2])}\right|^{\frac{1}{\alpha}} \\
\leq&\left\|A_1(\cdot;x_1,x_2,c,\omega)^{\alpha}-A_2(\cdot;x_1,x_2,c,\omega)^{\alpha}\right\|_{L^{1/\alpha}([0,x_2])}^{\frac{1}{\alpha}} \\
=&\int_0^{x_2}\left|A_1(y;x_1,x_2,c,\omega)^{\alpha}-A_2(y;x_1,x_2,c,\omega)^{\alpha}\right|^{\frac{1}{\alpha}}dy \\
=&\int_0^{x_2}\left|\left(\int_0^{\min(x_1,\omega)}B_1(t;x_1,x_2,c,\omega,y)dt\right)^{\alpha}-\left(\int_0^{\min(x_1,\omega)}B_2(t;x_1,x_2,c,\omega,y)dt\right)^{\alpha}\right|^{\frac{1}{\alpha}}dy \\
=&\int_0^{x_2}\Big|\left\|B_1(\cdot;x_1,x_2,c,\omega,y)^{\alpha}\right\|_{L^{1/\alpha}([0,\min(x_1,\omega)])} \\
&\quad\quad\quad\quad-\left\|B_2(\cdot;x_1,x_2,c,\omega,y)^{\alpha}\right\|_{L^{1/\alpha}([0,\min(x_1,\omega)])}\Big|^{\frac{1}{\alpha}}dy \\
\leq&\int_0^{x_2}\left\|B_1(\cdot;x_1,x_2,c,\omega,y)^{\alpha}-B_2(\cdot;x_1,x_2,c,\omega,y)^{\alpha}\right\|_{L^{1/\alpha}([0,\min(x_1,\omega)])}^{\frac{1}{\alpha}}dy \\
=&\int_0^{x_2}\int_0^{\min(x_1,\omega)}\Bigg|\left(\phi_{G,1}(y)h(\omega-t|c)\phi_{I,1}(x_1-t)\phi_{I,1}(x_2-t-y)\right)^{\alpha} \\
&\quad\quad\quad\quad-\left(\phi_{G,2}(y)h(\omega-t|c)\phi_{I,2}(x_1-t)\phi_{I,2}(x_2-t-y)\right)^{\alpha}\Bigg|^{\frac{1}{\alpha}}dtdy.
\end{align*}
By using the above inequality chain, we obtain
\begin{align}
&\sum_{c=1}^{K_C}\int_{[0,\infty)^3}\left|f_1(x_1,x_2,\omega|c)^{\alpha}-f_2(x_1,x_2,\omega|c)^{\alpha}\right|^{\frac{1}{\alpha}}p_C(c)d(x_1,x_2,\omega) \nonumber \\
=&\sum_{c=1}^{K_C}\int_{[0,\infty)^3}\phi_W(\omega)n(\omega,c)p_C(c) \nonumber \\
&\times\Bigg|\left(\int_0^{x_2}\phi_{G,1}(y)\int_0^{\min(x_1,\omega)}h(\omega-t|c)\phi_{I,1}(x_1-t)\phi_{I,1}(x_2-t-y)dtdy\right)^{\alpha} \nonumber \\
&-\left(\int_0^{x_2}\phi_{G,2}(y)\int_0^{\min(x_1,\omega)}h(\omega-t|c)\phi_{I,2}(x_1-t)\phi_{I,2}(x_2-t-y)dtdy\right)^{\alpha}\Bigg|^{\frac{1}{\alpha}}d(x_1,x_2,\omega) \nonumber \\
=&\sum_{c=1}^{K_C}\int_{[0,\infty)^3}\phi_W(\omega)n(\omega,c)p_C(c) \nonumber \\
&\quad\times\Bigg|\left(\int_0^{x_2}A_1(y;x_1,x_2,c,\omega)dy\right)^{\alpha}-\left(\int_0^{x_2}A_2(y;x_1,x_2,c,\omega)dy\right)^{\alpha}\Bigg|^{\frac{1}{\alpha}}d(x_1,x_2,\omega) \nonumber \\
\leq&\sum_{c=1}^{K_C}\int_{[0,\infty)^3}\phi_W(\omega)n(\omega,c)p_C(c) \nonumber \\
&\quad\quad\times\int_0^{x_2}\int_0^{\min(x_1,\omega)}\Big|\left(\phi_{G,1}(y)h(\omega-t|c)\phi_{I,1}(x_1-t)\phi_{I,1}(x_2-t-y)\right)^{\alpha} \nonumber \\
&\quad\quad\quad-\left(\phi_{G,2}(y)h(\omega-t|c)\phi_{I,2}(x_1-t)\phi_{I,2}(x_2-t-y)\right)^{\alpha}\Big|^{\frac{1}{\alpha}}dtdyd(x_1,x_2,\omega). \label{eq:step1}
\end{align}
Above we have an iterated integral over a non-negative function, we may thus re-arrange the order of integration and use substitution. We substitute below $a=x_1-t$ for $x_1$ and $b=x_2-t-y$ for $x_2$. Note that we implicitly take care of the integration bounds by using the indicator function and the fact that all densities are zero on the negative real line. Hence, we can continue the above inequality chain
\begin{align}
\eqref{eq:step1}=&\sum_{c=1}^{K_C}\int_{[0,\infty)^5}\Ind(y\leq x_2, t\leq\min(x_1,\omega))\phi_W(\omega)n(\omega,c)p_C(c)h(\omega-t|c) \nonumber \\
&\quad\quad\times\Big|\left(\phi_{G,1}(y)\phi_{I,1}(x_1-t)\phi_{I,1}(x_2-t-y)\right)^{\alpha} \nonumber \\
&\quad\quad\quad-\left(\phi_{G,2}(y)\phi_{I,2}(x_1-t)\phi_{I,2}(x_2-t-y)\right)^{\alpha}\Big|^{\frac{1}{\alpha}}dx_1\,dx_2\,dt\,dy\,d\omega \nonumber \\
=&\sum_{c=1}^{K_C}\int_{[0,\infty)^5}\Ind(y\leq x_2, t\leq\min(a+t,\omega))\phi_W(\omega)n(\omega,c)p_C(c)h(\omega-t|c) \nonumber \\
&\quad\quad\times\Big|\left(\phi_{G,1}(y)\phi_{I,1}(a)\phi_{I,1}(x_2-t-y)\right)^{\alpha} \nonumber \\
&\quad\quad\quad-\left(\phi_{G,2}(y)\phi_{I,2}(a)\phi_{I,2}(x_2-t-y)\right)^{\alpha}\Big|^{\frac{1}{\alpha}}da\,dx_2\,dt\,dy\,d\omega \nonumber \\
=&\sum_{c=1}^{K_C}\int_{[0,\infty)^5}\Ind(0\leq b+t, t\leq\min(a+t,\omega))\phi_W(\omega)n(\omega,c)p_C(c)h(\omega-t|c) \nonumber \\
&\quad\quad\times\Big|\left(\phi_{G,1}(y)\phi_{I,1}(a)\phi_{I,1}(b)\right)^{\alpha}-\left(\phi_{G,2}(y)\phi_{I,2}(a)\phi_{I,2}(b)\right)^{\alpha}\Big|^{\frac{1}{\alpha}}da\,db\,dt\,dy\,d\omega. \label{eq:step2}
\end{align}
Note next that the indicator equals actually $\Ind(t\in[0,\omega])$. We again interchange the order of integration, to integrate with respect to $t$ first and then with respect to $\omega$. By doing this, recalling the form of $\phi_{T_1}$ in Assumption (M), keeping in mind that $|x^{2\alpha}-y^{2\alpha}|\leq|x-y|^{2\alpha}$ for all  $x,y\geq0$ (since $2\alpha\in(0,1)$) and that $(x-y)^2\leq2x^2+2y^2$ for all $x,y$, we continue
\begin{align}
\eqref{eq:step2}=&\sum_{c=1}^{K_C}\int_{[0,\infty)^5}\phi_W(\omega)p_C(c)\phi_{T_1}(t|\omega,c)dt\,d\omega \nonumber \\
&\quad\quad\times\Big|\left(\phi_{G,1}(y)\phi_{I,1}(a)\phi_{I,1}(b)\right)^{\alpha}-\left(\phi_{G,2}(y)\phi_{I,2}(a)\phi_{I,2}(b)\right)^{\alpha}\Big|^{\frac{1}{\alpha}}da\,db\,dy \nonumber \\
=&\int_{[0,\infty)^3}\Big|\left(\phi_{G,1}(y)\phi_{I,1}(a)\phi_{I,1}(b)\right)^{\alpha}-\left(\phi_{G,2}(y)\phi_{I,2}(a)\phi_{I,2}(b)\right)^{\alpha}\Big|^{\frac{1}{\alpha}}da\,db\,dy \label{eq:ibound} \\
\leq&\int_{[0,\infty)^3}\Big|\left(\phi_{G,1}(y)\phi_{I,1}(a)\phi_{I,1}(b)\right)^{\frac{1}{2}}-\left(\phi_{G,2}(y)\phi_{I,2}(a)\phi_{I,2}(b)\right)^{\frac{1}{2}}\Big|^2da\,db\,dy \nonumber  \\
\leq&\int_{[0,\infty)^3}2\left(\phi_{G,1}(y)^{\frac{1}{2}}-\phi_{G,2}(y)^{\frac{1}{2}}\right)^2\phi_{I,1}(a)\phi_{I,1}(b)da\,db\,dy \nonumber  \\
&\quad\quad+2\int_{[0,\infty)^3}\phi_{G,2}(y)\left(\phi_{I,2}(a)^{\frac{1}{2}}\phi_{I,2}(b)^{\frac{1}{2}}-\phi_{I,1}(a)^{\frac{1}{2}}\Phi_{I,1}(b)^{\frac{1}{2}}\right)^2da\,db\,dy  \nonumber  \\
=&2\rho_H(\phi_{G,1},\phi_{G,2})^2+2\int_{[0,\infty)^2}\left(\phi_{I,2}(a)^{\frac{1}{2}}\phi_{I,2}(b)^{\frac{1}{2}}-\phi_{I,1}(a)^{\frac{1}{2}}\phi_{I,1}(b)^{\frac{1}{2}}\right)^2da\,db \nonumber  \\
\leq&2\rho_H(\phi_{G,1},\phi_{G,2})^2+4\int_{[0,\infty)^2}\left(\phi_{I,2}(a)^{\frac{1}{2}}-\phi_{I,1}(a)^{\frac{1}{2}}\right)^2\phi_{I,2}(b)d\,db \nonumber  \\
&\quad\quad+4\int_{[0,\infty)^2}\phi_{I,1}(a)\left(\phi_{I,1}(b)^{\frac{1}{2}}-\phi_{I,2}(b)^{\frac{1}{2}}\right)^2da\,db \nonumber  \\
=&2\rho_H(\phi_{G,1},\phi_{G,2})^2+8\rho_H(\phi_{I,1},\phi_{I,2})^2. \nonumber 
\end{align}
This is \eqref{eq:istep} and the proof is complete.
\end{proof}

\begin{proof}[Proof of Lemma \ref{lem:bracketing}]
Denote for any $m\in\IN_0$, $\Theta_m:=\{\theta\in\IR^{m+1}:\,\|\theta\|_2=1\}$. The proof of this Lemma uses the following strategy which is similar to Lemma 2.1 in \citet{O87}. In the interest of completeness we give the detailed proof: Let $\delta\in(0,\sqrt{3/2}/2]$ be given. We define for any $\theta_1\in\Theta_{m_1}$ and $\theta_2\in\Theta_{m_2}$ the $\delta$-ball
$$B_{\delta}(\theta_1,\theta_2):=\left\{\left(\tilde{\theta}_1,\tilde{\theta}_2\right)\in\Theta_{m_1}\times\Theta_{m_2}:\,\|\theta_1-\tilde{\theta}_1\|_2\leq\delta, \|\theta_2-\tilde{\theta}_2\|_2\leq\delta\right\}.$$
Find now a set $\left((\theta_{1,i},\theta_{2,i})\right)_{i=1,...,N(\delta)}\subseteq\Theta_{m_1}\times\Theta_{m_2}$ such that
$$\bigcup_{i=1}^{N(\delta)}B_{\delta}(\theta_{1,i},\theta_{2,i})\supseteq \Theta_{m_1}\times\Theta_{m_2}.$$
In order to bound $N(\delta)$ we construct a specific collection of pairs: Consider a grid of $[0,\pi]^{m_1}$ with side length $\alpha=2\delta\sqrt{2/3}$ and construct $\theta_{1,i}$ by taking the grid points as polar coordinates (with radius $1$). Then, it is clear that for any $\theta\in\Theta_{m_1}$, there is a grid point $\theta_{1,i}$ such that the difference between any two angles of the polar representations of $\theta$ and $\theta_{1,i}$ is smaller than $\alpha/2=\delta\sqrt{2/3}$. By Lemma \ref{lem:polar} below and symmetry of the polar coordinates, we find that $\|\theta-\theta_{1,i}\|_2\leq \sqrt{3/2}\alpha/2=\delta$. The size of this grid can be bounded by $(\pi/\alpha+1)^{m_1}$. We repeat this construction for $\Theta_{m_2}$ and obtain
\begin{equation}
\label{eq:covering_number}
N(\delta)\leq\left(\frac{\sqrt{3}\pi}{2\sqrt{2}\delta}+1\right)^{m_1+m_2}.
\end{equation}
The brackets are now defined as
\begin{align*}
l_i(x_1,x_2,\omega,c):=\inf_{(\theta_1,\theta_2)\in B_{\delta}(\theta_{1,i},\theta_{2,i})}f_{\phi_{I,\theta_1},\phi_{G,\theta_2}}(x_1,x_2,\omega|c)p_C(c), \\
u_i(x_1,x_2,\omega,c):=\sup_{(\theta_1,\theta_2)\in B_{\delta}(\theta_{1,i},\theta_{2,i})}f_{\phi_{I,\theta_1},\phi_{G,\theta_2}}(x_1,x_2,\omega|c)p_C(c).
\end{align*}
For any $f_{\phi_{I,\theta_1},\phi_{G,\theta_2}}p_C\in\mathcal{F}_{m_1,m_2}$ we find thus first a pair $(\theta_{1,i},\theta_{2,i})$ such that $(\theta_1,\theta_2)\in B_{\delta}(\theta_{1,i},\theta_{2,i})$ and thus also $l_i\leq f_{\phi_{I,\theta_1},\phi_{G,\theta_2}}p_C\leq u_i$. It remains to compute $\rho_H(l_i,u_i)$. To this end, we firstly see that the same arguments which lead to \eqref{eq:ibound} (for $\alpha=1/2$) give us here the following (the $\sup$ refers always to the supremum over all pairs $(\theta_1,\theta_2),(\tilde{\theta}_1,\tilde{\theta}_2)\in\Theta_{m_1}\times\Theta_{m_2}$ such that $\|\theta_1-\tilde{\theta}_1\|_2\leq\delta$ and $\|\theta_2-\tilde{\theta}_2\|_2\leq\delta$)
\begin{align*}
&\rho_H(l_i,u_i)^2 \\
\leq&\sum_{c=1}^{K_C}\int_{[0,\infty)^3}\sup\left(f_{\phi_{I,\theta_1},\phi_{G,\theta_2}}(x_1,x_2,\omega)^{\frac{1}{2}}-f_{\phi_{I,\tilde{\theta}_1},\phi_{G,\tilde{\theta}_2}}(x_1,x_2,\omega)^{\frac{1}{2}}\right)^2p_C(c)d(x_1,x_2,\omega) \\
\leq&\int_{[0,\infty)^3}\sup\left(\left(\phi_{G,\theta_1}(y)\phi_{I,\theta_2}(a)\phi_{I,\theta_2}(b)\right)^{\frac{1}{2}}-\left(\phi_{G,\tilde{\theta}_1}(y)\phi_{I,\tilde{\theta}_2}(a)\phi_{I,\tilde{\theta}_2}(b)\right)^{\frac{1}{2}}\right)^2dydadb \\
\leq&\int_{[0,\infty)^3}e^{-y-a-b}\sup\Bigg(\sum_{k=0}^{m_1}\left(\theta_{1,k}-\tilde{\theta}_{1,k}\right)L_k(y)\sum_{k=0}^{m_2}\theta_{2,k}L_k(a)\sum_{k=0}^{m_2}\theta_{2,k}L_k(b) \\
&-\sum_{k=0}^{m_1}\tilde{\theta}_{1,k}L_k(y)\Bigg(\sum_{k=0}^{m_2}\left(\tilde{\theta}_{2,k}-\theta_{2,k}\right)L_k(a)\sum_{k=0}^{m_2}\tilde{\theta}_{2,k}L_k(b) \\
&\quad\quad\quad\quad\quad\quad-\sum_{k=0}^{m_2}\theta_{2,k}L_k(a)\sum_{k=0}^{m_2}\left(\theta_{2,k}-\tilde{\theta}_{2,k}\right)L_k(b)\Bigg)\Bigg)^2 dydadb\\
\leq&\int_{[0,\infty)^3}e^{-y-a-b}\Bigg(2\delta^2\sum_{k=0}^{m_1}L_k(y)^2\sum_{k=0}^{m_2}L_k(a)^2\sum_{k=0}^{m_2}L_k(b)^2 \\
&+2\sum_{k=0}^{m_1}L_k(y)^2\left(2\delta^2\sum_{k=0}^{m_2}L_k(a)^2\sum_{k=0}^{m_2}L_k(b)^2+2\delta^2\sum_{k=0}^{m_2}L_k(a)^2\sum_{k=0}^{m_2}L_k(b)^2\right)\Bigg)dydadb \\
=&10\delta^2m_1m_2^2,
\end{align*}
where we used the Cauchy-Schwarz-Inequality in between and the integral properties of the Laguerre polynomials at the end. Thus, when putting $\delta
=\epsilon\left(10m_1m_2^2\right)^{-1/2}$ (the condition on $\delta$ is fulfilled by the assumption on $\epsilon$) we find together with \eqref{eq:covering_number}
$$\mathcal{N}_{[]}(\epsilon,\mathcal{F}_{m_1,m_2},\rho_H)\leq\left(\frac{\pi\sqrt{15m_1m_2^2}}{2\epsilon}+1\right)^{m_1+m_2}\leq\left(\frac{\pi\sqrt{15m_1m_2^2}}{\epsilon}\right)^{m_1+m_2}$$
and the proof is complete.
\end{proof}

\begin{lemma}
\label{lem:polar}
Let $n\geq2$ and $\delta\in[0,1/2]$. Denote by $e_1:=(1,0,...,0)'$ the first unit vector of $\IR^n$. Let $x\in\IR^n$ have $\|x\|_2=1$ and angles of polar coordinates $\psi_1,...,\psi_{n-1}\in[0,\delta]$. Then, $\|e_1-x\|_2\leq\delta\sqrt{3/2}$.
\end{lemma}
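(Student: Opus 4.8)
The plan is to compute $\|e_1-x\|_2$ explicitly in hyperspherical coordinates and then use the constraint $\|x\|_2=1$ to collapse the dependence on all but the first angle. Writing the unit vector $x$ in the standard hyperspherical parametrization, its first coordinate is $x_1=\cos\psi_1$, while every remaining coordinate $x_2,\dots,x_n$ carries a factor $\sin\psi_1$. The only structural fact I actually need about the other coordinates is the normalization consequence $\sum_{i=2}^n x_i^2 = 1-x_1^2 = \sin^2\psi_1$, which holds regardless of the angles $\psi_2,\dots,\psi_{n-1}$.

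First I would expand the squared distance and substitute this identity:
$$\|e_1-x\|_2^2 = (1-x_1)^2 + \sum_{i=2}^n x_i^2 = (1-x_1)^2 + (1-x_1^2) = 2(1-x_1).$$
This is the crucial simplification: because $x$ lies on the unit sphere, the $x_1^2$ terms cancel and the squared distance to $e_1$ depends only on $x_1=\cos\psi_1$, hence only on the first angle $\psi_1$. The remaining angles play no role whatsoever.

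Next I would bound $1-\cos\psi_1$ by the elementary inequality $1-\cos u = 2\sin^2(u/2) \leq u^2/2$, valid for all $u\in\IR$ since $|\sin t|\leq|t|$. Combined with $\psi_1\in[0,\delta]$ this gives
$$\|e_1-x\|_2^2 = 2(1-\cos\psi_1) \leq \psi_1^2 \leq \delta^2,$$
and taking square roots yields $\|e_1-x\|_2 \leq \delta \leq \delta\sqrt{3/2}$, which is the claimed bound (indeed a slightly stronger one, as $\sqrt{3/2}>1$).

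I do not anticipate a genuine obstacle here. The only point requiring care is fixing the hyperspherical convention and confirming that $\sum_{i\geq2}x_i^2=\sin^2\psi_1$ so that the cross terms cancel; once the identity $\|e_1-x\|_2^2 = 2(1-\cos\psi_1)$ is in hand, the estimate is a one-line application of $1-\cos u\leq u^2/2$. It is worth noting that the hypothesis $\delta\leq1/2$ and the bounds on $\psi_2,\dots,\psi_{n-1}$ are never used in this argument, which indicates that the stated constant $\sqrt{3/2}$ carries some slack and that a coarser coordinate-wise estimate (which would produce such a constant) is unnecessary.
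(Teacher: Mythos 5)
Your proof is correct, and it takes a genuinely different route from the paper's. The paper expands $\|e_1-x\|_2^2$ coordinate by coordinate in the hyperspherical parametrization, bounds each term via $|\sin\psi|\leq|\psi|$ and $|1-\cos\psi|\leq\psi^2$, and then sums a geometric series, which is where both the hypothesis $\delta\leq1/2$ and the constant $\sqrt{3/2}$ enter. You instead observe that for unit vectors the polarization identity gives $\|e_1-x\|_2^2=2(1-\langle e_1,x\rangle)=2(1-\cos\psi_1)$, so that the angles $\psi_2,\dots,\psi_{n-1}$ and the constraint $\delta\leq 1/2$ are irrelevant, and a single application of $1-\cos u=2\sin^2(u/2)\leq u^2/2$ yields the sharper bound $\|e_1-x\|_2\leq\delta$. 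Your argument is cleaner and strictly stronger for the statement as written. The one thing the paper's termwise estimate is better adapted to is the way the lemma is actually invoked in the proof of Lemma~\ref{lem:bracketing}, where one compares two \emph{arbitrary} unit vectors whose polar angles differ by at most $\delta$ (reduced to the $e_1$ case by an appeal to ``symmetry''); there the inner product is no longer the cosine of a single angle, so your one-line collapse does not apply verbatim, whereas a coordinate-wise difference bound extends more directly. For the lemma itself, however, your proof is complete and preferable; it even sidesteps a small numerical slack in the paper's final step, since $(\delta^2-\delta^4+1)/(1-\delta^2)$ slightly exceeds $3/2$ at $\delta=1/2$.
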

\begin{proof}
By a Taylor expansion, we have for any $\psi\in[0,2\pi]$ (below $\psi^*$ denotes different intermediate values between $0$ and $\psi$)
\begin{align*}
|\sin(\psi)|=&|\cos(\psi^*)|\cdot|\psi|\leq|\psi| \\
|\cos(\psi)-1|=&|\sin(\psi^*)|\cdot|\psi|\leq|\psi|^2.
\end{align*}
By the definition of polar coordinates, we compute (for $n=2$ the sum disappears)
\begin{align*}
&\|e_1-x\|_2^2=(1-\cos(\psi_1))^2+\prod_{k=1}^{n-1}\sin(\psi_k)^2+\sum_{i=2}^{n-1}\prod_{k=1}^{i-1}\sin(\psi_k)^2\cos(\psi_i)^2 \\
\leq&\psi_1^4+\prod_{k=1}^{n-1}\psi_k^2+\sum_{i=2}^{n-1}\prod_{k=1}^{i-1}\psi_k^2 \leq\delta^4+\delta^{2(n-1)}+\sum_{i=2}^{n-1}\delta^{2(i-1)}\leq\delta^2\frac{\delta^2-\delta^4+1}{1-\delta^2}.
\end{align*}
The statement follows since $(\delta^2-\delta^4+1)/(1-\delta^2)\leq3/2$ for $\delta\in[0,1/2]$.
\end{proof}

\end{document}